\documentclass[12pt, reqno]{amsart}
\usepackage{latexsym,amsmath,amsopn,amssymb,amsthm,amsfonts}
\usepackage[T2A]{fontenc}
\usepackage[cp1251]{inputenc}
\usepackage[english]{babel}
\usepackage[backref=page, breaklinks=true,colorlinks=true,linkcolor=blue,citecolor=blue,urlcolor=blue]{hyperref}

\usepackage{graphicx}

\renewenvironment{proof}[1][Proof]{\textbf{#1.} }{\ \rule{0.5em}{0.5em}}

\DeclareMathOperator{\GT}{GT}

\DeclareMathOperator{\sgn}{sgn}

\renewenvironment{proof}[1][Proof]{\textbf{#1.} }
{\ \rule{0.5em}{0.5em}}
\newtheorem{theorem}{Theorem}
\newtheorem{prop}{Proposition}
\newtheorem{lemma}{Lemma}
\newtheorem{corollary}{Corollary}

\theoremstyle{definition}
\newtheorem{definition}{Definition}
\newtheorem{remark}{Remark}

\newtheorem{problem}{Problem}

\begin{document}
\title
[A comprehensive analysis of the Snellius--Pothenot problem]
{A comprehensive analysis of \\ the Snellius--Pothenot problem}
\author{E.V.~Nikitenko, Yu.G.~Nikonorov, M.Q.~Rieck}

\address{Evgeni\u\i\  Vitalievich Nikitenko \newline
Rubtsovsk Industrial Institute of Altai State \newline
Technical University after I.I.~Polzunov \newline
Rubtsovsk, Traktornaya st., 2/6, \newline
658207, Russia}
\email{evnikit@mail.ru}

\address{Yuri\u{\i} Gennadievich Nikonorov \newline
Southern Mathematical Institute of the Vladikavkaz \newline
Scientific Center of the Russian Academy of Sciences, \newline
Vladikavkaz, Vatutina st., 53, \newline
362025, Russia}
\email{nikonorov2006@mail.ru}

\address{Michael Quentin Rieck \newline
Mathematics and Computer Science Department,\newline
Drake University, Des Moines, IA 50311, USA}
\email{michael.rieck@drake.edu}

\begin{abstract}

It is known that a point in three-dimensional Euclidean space whose coordinates are equal to the cosines of the angles $\angle BDC, \angle ADC, \angle ADB$, where the point $D$ lies in the plane of a given triangle $ABC$, lies on the surface $\mathbb{BP}\subset [-1,1]^3$, given by the equation $1+2x_1x_2x_3-x_1^2-x_2^2-x_3^2 = 0$. It should be emphasized that the set of corresponding points essentially depends on the shape of triangle $ABC$. In this paper, we solve the following problem:
For a fixed triangle $ABC$, for each point $U \in \mathbb{BP}$, determine the number of points $D$ from the plane of the triangle with the condition $U=(\cos \angle BDC, \cos \angle ADC, \cos \angle ADB )$. The problem of determining such points $D$ is known as the Snellius--Pothenot problem.

\vspace{2mm}
\noindent
2020 Mathematical Subject Classification:
21M20, 51M16, 51M25, 53A04, 53A05, 57N35, 65D19

\vspace{2mm} \noindent Key words and phrases: angle, base, Euclidean plane, Euclidean space, extremal problems, Snellius--Pothenot problem, tetrahedron, triangle.
\end{abstract}

\maketitle

\section{Introduction}

In what follows, $\mathbb{E}^3$ denotes $3$-dimensional Euclidean space and $d(L,M)$ denotes the Euclidean distance between the points $L,M \in \mathbb{E}^3$.

Let us fix a non-degenerate triangle $ABC$ in $\mathbb{E}^3$ with $d(A,B)=\overline{z}$, $d(B,C)=\overline{x}$,  and $d(A,C)=\overline{y}$.
For a tetrahedron $ABCD$ in $\mathbb{E}^3$ with the base $ABC$, we will use the following notation:
$d(D,A)=x$, $d(D,B)=y$, $d(D,C)=z$, $\angle ADB=\overline{\gamma}$, $\angle ADC=\overline{\beta}$, and $\angle BDC=\overline{\alpha}$.
It is easy to check that the numbers $x=d(D,A)$, $y=d(D,B)$, and $z=d(D,C)$  characterize the point $D$,
up to reflection with respect to the plane $ABC$.
\smallskip

Let us consider the set $\Omega (\triangle ABC)$ of all tetrahedra $ABCD$ with a given non-degenerate base $ABC$ in $\mathbb{E}^3$ and $D$ lying outside the plane $ABC$.
Obviously, $\Omega (\triangle ABC)$ is naturally parameterized by points $D \in  \mathbb{E}^3 \setminus \Pi^0$, where $\Pi^0$ is the plane $ABC$.
Moreover, if $\Pi^{-}$ and $\Pi^{+}$ are two distinct half-spaces determined by the plane $\Pi^0$,
then it is sufficient to consider points $D$ in only one of these half-spaces.
\smallskip

\begin{definition}\label{def_numgeom}
For a given triples $(a_1,a_2,a_3)\in (-1,1)^3$, we denote by $N(a_1,a_2,a_3)$  the number of  (geometrically distinct) points $D\in \Pi^{+}$ such that
\begin{equation}\label{eq.numb.1}
\bigl(\cos \angle BDC,\cos \angle ADC,\cos \angle ADB \bigr)=(a_1,a_2,a_3).
\end{equation}
\end{definition}

The following problem is very important:

\begin{problem} \label{prob.1}
For a given triples $(a_1,a_2,a_3)\in (-1,1)^3$, determine the number $N(a_1,a_2,a_3)$.
\end{problem}

Solving this problem requires the use of different approaches, and we plan to devote several future publications to it.
\smallskip

Let us recall the corresponding problem on the plane: Determine the exact location of an unknown observation point on a plane using
only the known coordinates of three given points and the measured angles from the observation point to any pair of given points.
This problem (the so-called {\it Snellius--Pothenot problem}) was first studied by Snellius, who found a solution around 1615
\cite{Snell,Haasbroek}.
Clearly, here we have a partial case of the aforementioned problem in space, but the points $D$ are taken only from $\Pi^0\setminus \{A, B,C\}$.
\smallskip

Let us recall a classical formula for the volume of a tetrahedron, which will be very useful to us.
If $V$ is the volume of the tetrahedron $ABCD$, then we have
\begin{eqnarray}\label{eq.vol.1}
V^2\!\!\!&=\!\!\! &\frac{1}{36}\, x^2y^2z^2 \Bigl(1 + 2 a_1a_2a_3- a_1^2-a_2^2-a_3^2\Bigr),
\end{eqnarray}
where
$x=d(D,A)$, $y=d(D,B)$, $z=d(D,C)$, $a_1=\cos \angle BDC$, $a_2=\cos \angle ADC$, $a_3=\cos \angle ADB$.
This result was obtained by L.~Euler \cite{Euler},
see also Problems IV and VI in Note 5 (pp. 249, 251) of \cite{Legendre}.

This implies that every point
$(a_1,a_2,a_3)=\bigl(\cos \angle BDC, \cos \angle ADC, \cos \angle ADB \bigr)\in \mathbb{R}^3$
is situated in the set
\begin{equation}\label{eq_pil}
\mathbb{P}=\left\{(x_1,x_2,x_3)\in [-1,1]^3\,|\,1+2x_1x_2x_3-x_1^2-x_2^2-x_3^2 \geq 0 \right\}
\end{equation}
(the ``pillow''), see details e.~g. in \cite[Theorem 1]{NikNik2025}.
We will also often use the surface
\begin{equation}\label{eq.pillowc}
\mathbb{BP}=\left\{(x_1,x_2,x_3)\in [-1,1]^3\,|\,1+2x_1x_2x_3-x_1^2-x_2^2-x_3^2 = 0 \right\}
\end{equation}
(the ``pillowcase''), that is the boundary of $\mathbb{P}$.
\smallskip

It is known that $N(a_1,a_2,a_3)\leq 4$ for any  $(a_1,a_2,a_3)\in \mathbb{P} \setminus \mathbb{BP}$.
This was proved by J.A.~Grunert in \cite{Gru1841}. The set of  $(a_1,a_2,a_3)\in \mathbb{P}$ with $N(a_1,a_2,a_3) \geq 1$ is determined in~\cite{NikNik2025}.
\smallskip

The formula \eqref{eq.vol.1} implies that $D\in \Pi^0\setminus \{A, B,C\}$ if and only if
$(a_1,a_2,a_3)=\bigl(\cos \angle BDC, \cos \angle ADC, \cos \angle ADB \bigr)$
is situated in the set $\mathbb{BP}$.
Therefore, the following problem is also very natural.

\begin{problem} \label{prob.2}
For a given triples $(a_1,a_2,a_3)\in \mathbb{BP}$, determine the number $N(a_1,a_2,a_3)$, i.e.
the number of points $D\in \Pi^0\setminus \{A, B,C\}$, where $\Pi^0$ is the plane $ABC$,
with the property $(a_1,a_2,a_3)=\bigl(\cos \angle BDC, \cos \angle ADC, \cos \angle ADB \bigr)$.
\end{problem}
This article is devoted to a thorough solution to this problem.
\bigskip

In \cite[pp. 203--206]{Snell}, Snellius determined the location of point $\mathbf{o}$ (the roof of his house) using angles taken from point
$\mathbf{o}$ to three known landmarks in Leiden: $\mathbf{y}$ (Pieterskerk), $\mathbf{i}$ (Town Hall), and $\mathbf{u}$ (Hooglandse Kerk).
Note that Pieterskerk (St. Peter's Church) is the oldest church in Leiden,
Town Hall is the historic Leiden Town Hall building on Breestraat square, and
Hooglandse Kerk (Church on the High Land) is a Late Gothic church in Leiden.
\medskip

The mathematician and  geodetic L.~Pothenot, while participating in cartographic projects,
for example, in mapping the Erre River canal commissioned by King Louis XIV,
used a similar method to determine the positions of the required points; see \cite{Pothenot1692}.

\medskip

In 1841, the German mathematician J.A.~Grunert published several papers devoted to solving, among other things,
Pothenot's problem and its extended version, for which he proposed a system of equations that later bore his name.
See \cite{Gru1841.0} and \cite{Gru1841}.

For the history of research into the relevant problems and the methods created to solve them, one can recommend
\cite[P.~165--193]{AwGra}, \cite{HLON}, \cite{Rieck15}, \cite[P.~251--254]{Wild}, \cite{WMMS} and the references therein.
\smallskip

In fact, we deal with the following famous problem in elementary Euclidean geometry:
Given three fixed points, construct a point from which they are seen under three given angles.
This single geometric problem is famous under several names, depending on the field and specific context.

It is called {\sl The Three-Point Problem or Resection Problem} in Surveying and Geodesy,
{\sl Resection} in Navigation (Marine and  Aerial), {\sl Camera Pose Estimation or Perspective-3-Point (P3P)} in Computer Vision and Robotics.
All these demonstrate practical applications of the Snellius--Pothenot problem.

\smallskip

Although the Snellius--Pothenot problem is not mentioned by name in \cite{Yz}, the introduction and study of so-called
{\it antigonal pairs} actually provides a different perspective on  the Snellius--Pothenot problem.
The focus there is on the signed angles, modulo $\pi$, subtended by the triangle vertices at an arbitrary point.
It turns out that an antigonal pair of points is sometimes also a pair of related solution points to the  Snellius--Pothenot problem (for appropriate parameters),
and such solution point pairs always form antigonal pairs. Antigonal pairs are further discussed in \cite{MR}.

\smallskip

Let us consider the following important points in $\mathbb{BP}$:
\begin{eqnarray}\label{eq.spec.points.1} \notag
\widetilde{A}&:=&\Bigl(-\cos \angle BAC,  \,\cos \angle ABC, \, \cos \angle ACB\Bigl),\\
\widetilde{B}&:=&\Bigl(\cos \angle BAC,  \,-\cos \angle ABC,  \,\cos \angle ACB\Bigl),\\\notag
\widetilde{C}&:=&\Bigl(\cos \angle BAC, \, \cos \angle ABC,  \,-\cos \angle ACB\Bigl).
\end{eqnarray}

Let $\mathbb{S}$ denote the circumcircle of triangle $ABC$. The points $A, B, C$ divide $\mathbb{S}$ into three arcs.
It is easy to see that $\bigl(\cos \angle BDC, \cos \angle ADC, \cos \angle ADB \bigr)$ is exactly the point $\widetilde{A}$
(respectively, $\widetilde{B}$ or $\widetilde{C}$), if $D$ is taken from the arc that does not contain the point $A$ (respectively, $B$ or $C$).
This observation shows that finding numbers
$N(a_1,a_2,a_3)$ for points $(a_1,a_2,a_3)\in \mathbb{BP}$
is not a very simple problem. But as will become clear later, for points of  $\mathbb{BP}\setminus\{\widetilde{A}, \widetilde{B}, \widetilde{C}\}$,
the number $N(a_1,a_2,a_3)$ does not exceed $2$.

An important role in the subsequent discussion is played by the planes $x_1=\cos \angle BAC$, $x_2=\cos \angle ABC$, $x_3=\cos \angle ACB$,
that intersect the surface $\mathbb{BP}$ along ellipses  $E_A, E_B, E_C$, respectively, as well as the connected domains of $\mathbb{BP}$
obtained by removing these three ellipses.

Theorem \ref{te.ellipces} gives us the number $N(a_1,a_2,a_3)$ for all points $(a_1,a_2,a_3)$ from the above ellipses.
The number $N(a_1,a_2,a_3)$ on the mentioned connected domains is studied separately for the acute-angled, rectangular, and obtuse-angled cases. Our main results in this direction are contained in Theorems \ref{te.decom_acute}, \ref{te.decom_right}, and \ref{te.decom_obtuse},
which solve Problem~\ref{prob.2} in the case of acute-angled, rectangular, and obtuse-angled base $ABC$, respectively.

\smallskip

\section{Notation and some auxiliary results}

A comprehensive overview of the interrelationships of the elements in a given tetrahedron can be found in \cite{WirDr2009} and \cite{WirDr2014}.
All necessary information about differential-geometric characteristics of smooth curves and surfaces in three-dimensional space can be found e.g. in \cite{Top}.
\smallskip

The names ``pillow'' and ``pillowcase'' for the set $\mathbb{P}$ \eqref{eq_pil} and the surface $\mathbb{BP}$ \eqref{eq.pillowc}
are not standard, they are taken from \cite{NikNik2025}.
It should be noted that $\mathbb{BP}$ is the part of the classical {\it Cayley {\rm(}nodal{\rm)} cubic surface}
in the cube $[-1,1]^3 \subset \mathbb{R}^3$, see e.g. \cite[P.~392]{BuVe}.
Moreover, $\mathbb{BP}$  is also known as a {\it spectrahedron} \cite{Vin}
and also as  {\it Cassini Berlingot} \cite[P.~3]{BoFa}.
\smallskip

In what follows, we will use the following parameterizations and notations:
\begin{equation}\label{eq.Pi+Pi0}
\Pi^+=\{(p,q,r) \in \mathbb{R}^3\,|\, r>0\}, \quad \Pi^0=\{(p,q,0) \in \mathbb{R}^3\},
\end{equation}
\begin{equation}\label{eq.GT}
\GT=\Pi^+ \bigcup \bigl(\Pi^0 \setminus \{A,B,C\} \bigr).
\end{equation}

The latter set is identified with the set of all tetrahedra with the base $ABC$
(we consider only the case when $D \not\in \{A,B,C\}$). We can also formally define the map $F:\GT \to \mathbb{R}^3$:
\begin{equation}\label{eq_f1}
F(D)=\Bigl(\cos \angle BDC,\cos \angle ADC,\cos \angle ADB \Bigr)=(a_1,a_2,a_3).
\end{equation}

As discussed earlier, any point of the image of the function \eqref{eq_f1} belongs to the set~\eqref{eq_pil}.
It is also easy to see that $F(D)\to (1,1,1)$ if and only if $\|D\| \to \infty$.
\smallskip

Let us recall that the base $\triangle ABC$ is assumed fixed.
In particular, its side lengths $d(A,B)=d_3$, $d(B,C)=d_1$, $d(A,C)=d_2$ are fixed.

Let us consider the Grunert system. If we know $a_1=\cos \angle BDC$, $a_2=\cos \angle ADC$, and $a_3=\cos \angle ADB$,
then we have to find the following value: $s_1=d(D,A)$, $s_2=d(D,B)$, and $s_3=d(D,C)$. Using the cosine theorem we get the Grunert system as follows:
\begin{eqnarray}\label{eq_Grunert_1}\notag
s_1^2-2s_1s_2\cdot a_3 +s_2^2 &=& d_3^2,\\
s_1^2-2s_1s_3\cdot a_2 +s_3^2 &=& d_2^2,\\ \notag
s_2^2-2s_2s_3\cdot a_1 +s_3^2 &=& d_1^2.
\end{eqnarray}

\begin{remark}
It is also useful to have a special notation, say, $N^a(a_1,a_2,a_3)$, for the number of points
of points $D\in \Pi^{+}$ satisfying \eqref{eq.numb.1} and
taking into account the multiplicity of these points (see e.~g. \cite[P.~146]{CLO}).
\end{remark}

Various results on the properties of points $F(D)$, where $D\in \Pi^0 \setminus \{A,B,C\}$, can be found in \cite[Section~5]{NikNik2025}.
Let us consider the decomposition
\begin{equation}\label{eq.decom.plane1}
\Pi^0  = \mathbb{B}_{-}\bigcup \mathbb{S} \bigcup \mathbb{B}_{+},
\end{equation}
where $\mathbb{S}$ is the circumscribed circle for the triangle $ABC$, $\mathbb{B}_{-}$ is the open disc bounded by $\mathbb{S}$,
and $\mathbb{B}_{+}$ is the set of points outside $\mathbb{S}$.
\medskip

We denote by $E_A, E_B, E_C$ the ellipses that
are the intersections of the surface $\mathbb{BP}$
with the planes $x=\cos \angle BAC$, $y=\cos \angle ABC$, $z=\cos \angle ACB$, respectively.
Let us recall the following result.

\begin{prop}[\cite{NikNik2025}]\label{pr.limitpoint.1}
The limit points of the points $F(D)$ {\rm(}$D \in \GT${\rm)}, when $D \to A$, $D \to B$, or $D \to C$, are as follows:
$$
\operatorname{Lim}(A)=\left\{\bigl(x_0, y, z\bigr)\in \mathbb{R}^3 \,\left|\,
y^2+z^2-2 x_0yz \leq 1-x_0^2, \, x_0=\cos \angle BAC \right.\right\},
$$
$$
\operatorname{Lim}(B)=\left\{\bigl(x, y_0, z\bigr)\in \mathbb{R}^3 \,\left|\,
x^2+z^2-2y_0xz \leq 1-y_0^2,\, y_0=\cos \angle ABC\right.\right\},
$$
$$
\operatorname{Lim}(C)=\left\{\bigl(x, y,z_0\bigr)\in \mathbb{R}^3 \,\left|\,
x^2+y^2-2z_0xy \leq 1-z_0^2,\, z_0=\cos \angle ACB  \right.\right\}.
$$
\end{prop}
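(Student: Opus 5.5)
We must show that the limit set of $F(D)$ as $D\to A$ within $\GT$ is exactly the filled ellipse $\operatorname{Lim}(A)$ in the plane $x_1=\cos\angle BAC$. The cases $D\to B$ and $D\to C$ follow by relabeling. The plan is to parametrize the approach to $A$ by the direction along which $D$ leaves $A$ and read off the three cosines to first order.

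\textbf{Coordinates.} Write $D=A+t\,u$ with $t>0$ small and $u$ a unit vector with nonnegative component along the normal of $\Pi^0$, so that $D\in\GT$. As $t\to0$, $\angle BDC\to\angle BAC$, since the directions from $D$ to $B$ and to $C$ converge to those from $A$. Hence the first coordinate of any limit point equals $x_0=\cos\angle BAC$.

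For the second coordinate, the vector $D\to A$ is $-tu$ and the vector $D\to C$ tends to $C-A$. Therefore
\[
\cos\angle ADC=-\frac{\langle u,\,C-A\rangle}{|C-A|}+O(t)=-\langle u,e_C\rangle+O(t),
\]
where $e_B,e_C$ are the unit vectors from $A$ toward $B$ and $C$. In the same way, $\cos\angle ADB\to-\langle u,e_B\rangle$. Convergence is uniform in $u$, so the limit points are exactly the points
\[
(x_0,\,-\langle u,e_C\rangle,\,-\langle u,e_B\rangle),
\]
with $u$ ranging over the closed upper unit hemisphere. A sequence $D_n\to A$ has directions $u_n$ with convergent subsequences, and conversely every $u$ is realized by a ray. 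Note that a horizontal $u$ with $D$ in $\Pi^0$ is allowed, since $D\ne A$.

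\textbf{Identifying the image.} Decompose $u=v+w\,n$, with $v$ in the plane spanned by $e_B,e_C$ and $|v|\le1$. The map $u\mapsto(\langle u,e_C\rangle,\langle u,e_B\rangle)$ depends only on $v$. As $u$ runs over the upper hemisphere, $v$ runs over the whole closed unit disk. Since $e_B,e_C$ are linearly independent, this is a linear bijection of the plane. It sends the unit disk to the region $\{(y,z): Q(y,z)\le1\}$, where $Q$ is the quadratic form dual to the Gram matrix $\begin{pmatrix}1&x_0\\ x_0&1\end{pmatrix}$.

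Inverting this matrix, for $v=\alpha e_C+\beta e_B$ with $y=\alpha+x_0\beta$ and $z=x_0\alpha+\beta$, we have $|v|^2=(y^2+z^2-2x_0yz)/(1-x_0^2)$. Hence $|v|\le1$ is equivalent to $y^2+z^2-2x_0yz\le1-x_0^2$. The sign change $(y,z)\mapsto(-y,-z)$ preserves this inequality, which yields $\operatorname{Lim}(A)$.

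\textbf{Main obstacle.} The only delicate point is justifying that the limit set is both exactly this region and closed. This requires the uniform $O(t)$ error estimate and the surjectivity onto the full closed disk, including the boundary from in-plane directions, which lands on the ellipse $E_A$. Both follow from the explicit first-order expansion above.
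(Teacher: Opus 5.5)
Your proof is correct and complete. The exact formula $\cos\angle ADC=(t-\langle u,C-A\rangle)/|C-A-tu|$, and its analogue for $\angle ADB$, gives the uniform $O(t)$ estimate. Compactness of the closed hemisphere of directions then turns this into an exact description of the cluster set. That description includes closedness, and it includes attainment of the boundary via horizontal $u$; these are admissible because $\GT$ contains $\Pi^0\setminus\{A,B,C\}$. Your Gram-matrix computation $|v|^2=(y^2+z^2-2x_0yz)/(1-x_0^2)$ is also right.

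There is no argument in the paper to compare yours with. The proposition is quoted from \cite{NikNik2025} without proof, so your first-order direction analysis serves as a self-contained proof of it. Your route also makes explicit a fact the paper uses tacitly later. Suppose the directions $(D_n-A)/|D_n-A|$ converge to $u$. Then $F(D_n)$ tends to a point of $E_A$ when $u$ is parallel to $\Pi^0$, and to an interior point of the filled ellipse otherwise. For example, apply your analysis at $B$ in the setting of the first claim of Proposition \ref{pr.numsorel2}. It shows at once that the limit of $h(D)$ as $D\to B$ along the planar arc $\gamma_2$ lies on $E_B$. Since $\angle BDC=\angle BAC$ along $\gamma_2$, that limit also lies on $E_A$, hence in $E_A\cap E_B=\{\widetilde{C},\widehat{C}\}$.
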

Therefore,  $E_A, E_B, E_C$ are the boundaries of $\operatorname{Lim}(A), \operatorname{Lim}(B), \operatorname{Lim}(C)$ in the corresponding planes, respectively.

\section{On the number of solutions for the Grunert system on $\mathbb{BP}$}\label{se.numersol}

Here we consider a reduction of \eqref{eq_Grunert_1} to a quartic equation with respect to $s_1^2$ according to the method from \cite{KHF}.
The system \eqref{eq_Grunert_1} can be rewritten as follows:

\begin{eqnarray}\label{eq_Grunert_4}
\left(
               \begin{array}{c}
                 s_2^2 \\
                 s_3^2 \\
                 s_2 s_3 \\
               \end{array}
\right)=
\left(
                \begin{array}{ccc}
                2a_3s_1& 0 & d_3^2-s_1^2\\
                0& 2a_2s_1 & d_2^2-s_1^2\\
                \frac{a_3s_1}{a_1}& \frac{a_2s_1}{a_1}& \frac{d_2^2+d_3^2-d_1^2-2s_1^2}{2a_1} \\
               \end{array}
\right)
\left(
               \begin{array}{c}
                 s_2 \\
                 s_3 \\
                 1 \\
               \end{array}
\right).
\end{eqnarray}

Note, that the matrix above depends only on $s_1$,  $a_i$, and $d_i$ for $i=1,2,3$.
Applying~\eqref{eq_Grunert_4} (twice) to both sides of each of the following equalities
$$
s_2^2 s_3=(s_2s_3)s_2, \quad (s_2s_3)s_3=s_3^2 s_2, \quad (s_2s_3)(s_2s_3)=s_2^2s_3^2,
$$
we get some linear equalities  $c_{i,1} s_2 +c_{i,2} s_3 +c_{i,3}=0$, $i=1,2,3$ (all entries $c_{i,j}$ depends only on $s_1$,  $a_i$, and $d_i$ for $i=1,2,3$).
This implies that $\det (c_{i,j}) =0$. This equality is a polynomial equation with respect $s_1$.
More precisely, it has a form $-\frac{1}{16a_1^4} \cdot \widetilde{P} (s_1^2)=0$, where
\begin{equation}\label{eq_polptul1}
\widetilde{P} (u) = \Lambda_4\, u^4 +\Lambda_3\, u^3+ \Lambda_2\, u^2 +\Lambda_1\, u +\Lambda_0,
\end{equation}
{\begin{small}
\begin{eqnarray*}
\Lambda_4 &=& 16(2a_1a_2a_3 - a_1^2 - a_2^2 - a_3^2 + 1)^2 ,\\
\Lambda_3 &=& 32(2a_1a_2a_3 - a_1^2 - a_2^2 - a_3^2 + 1)\bigl((2a_2^2a_3^2 - a_1a_2a_3 - a_2^2 - a_3^2 + 1)d_1^2 \bigr.\\
&& \bigl. - (a_1a_2a_3 - a_1^2 - a_3^2 + 1)d_2^2 - (a_1a_2a_3 - a_1^2 - a_2^2 + 1)d_3^2\bigr),
\\%\end{eqnarray*}
%\begin{eqnarray*}
\Lambda_2 &=& 16(d_2^4 + 4d_2^2d_3^2 + d_3^4)a_1^4 + 16(d_1^2 - d_3^2)^2a_2^4 + 16(d_1^2 - d_2^2)^2a_3^4\\
&& - 32(d_1^2d_2^2 + d_1^2d_3^2 + d_2^4 + 4d_2^2d_3^2 + d_3^4)a_2a_3a_1^3 \\
&&- 8(5d_1^2-d_2^2-5d_3^2)(d_1^2 - d_2^2 - d_3^2)a_2^2 - 8(5d_1^2 - 5d_2^2 - d_3^2)(d_1^2 - d_2^2 - d_3^2)a_3^2\\
&& + 16(3d_1^4 - 4d_1^2d_2^2 - 4d_1^2d_3^2 + d_2^4 +d_3^4)a_2^2a_3^2 + 64(d_1^2d_2^2 + d_1^2d_3^2 + d_2^2d_3^2)a_1^2a_2^2a_3^2 \\
&& + 24(d_1^2 - d_2^2 - d_3^2)^2- 8a_1(1-2a_2^2 - 2a_3^2)(a_1-2a_2a_3)d_1^4 \\
&& + 8a_1^2\bigl((6d_2^2 + 6d_3^2-8a_2^2d_3^2 - 8a_3^2d_2^2)d_1^2 + 2a_2^2d_2^4 + 4a_2^2d_2^2d_3^2 + 6a_2^2d_3^4 \bigr.\\
&& \bigl. + 6a_3^2d_2^4 + 4a_3^2d_2^2d_3^2 + 2a_3^2d_3^4 - 5d_2^4 - 14d_2^2d_3^2 - 5d_3^4\bigr)\\
&&  -16a_1a_2a_3 \bigl((2d_1^2d_2^2 - 4d_1^2d_3^2 + 2d_2^2d_3^2 + 2d_3^4)a_2^2 \bigr.\\
&&\bigl. + (2d_1^2d_3^2 + 2d_2^4 + 2d_2^2d_3^2-4d_1^2d_2^2)a_3^2 + 2d_1^2d_2^2 + 2d_1^2d_3^2 - d_2^4 - 10d_2^2d_3^2 - d_3^4\bigr),
\end{eqnarray*}
\begin{eqnarray*}
\Lambda_1 &=& 8(d_1^2 - d_2^2 - d_3^2)^3 + 8(d_1^2 - d_2^2 - d_3^2)(d_1^2d_2^2 + d_1^2d_3^2 - d_2^4 - 6d_2^2d_3^2 - d_3^4)a_1^2\\
&& - 8(d_1^2 - d_3^2)(d_1^2 - d_2^2 - d_3^2)^2a_2^2 - 8(d_1^2 - d_2^2)(d_1^2 - d_2^2 - d_3^2)^2a_3^2 - 32d_2^2d_3^2(d_2^2 + d_3^2)a_1^4\\
&& + 8(d1^2 - d_2^2 - d_3^2)(d_1^4 - d_2^4 + 6d_2^2d_3^2 - d_3^4)a_1a_2a_3 - 16d_2^2(d_1^4 - 2d_1^2d_2^2 + d_2^4 + d_3^4)a_1^2a_3^2\\
&&  - 16d_3^2(d_1^4 - 2d_1^2d_3^2 + d_2^4 + d_3^4)a_1^2a_2^2 + 32d_2^2d_3^2(d_1^2 + d_2^2 +d_3^2)a_2a_3a_1^3,\\
\Lambda_0 &=&  (2a_1d_2d_3 + d_1^2 - d_2^2 - d_3^2)^2 (2a_1d_2d_3 - d_1^2 + d_2^2 + d_3^2)^2=(4a_1^2d_2^2d_3^2-(d_1^2 - d_2^2 - d_3^2)^2)^2.\\
\end{eqnarray*}
\end{small}

\begin{remark}\label{rem.zero.coeff}
We see that $\Lambda_0=0$ if and only if  $2a_1 d_2d_3=\pm (d_1^2 - d_2^2 - d_3^2)$ or, equivalently, $a_1=\pm \cos \angle BAC$.
\end{remark}

\begin{remark}\label{rem.invar.3}
It should be noted that polynomial $\widetilde{P} (u)$  (see \eqref{eq_polptul1}) are invariant under the following transformations:
$$
(a_1,a_2,a_3) \mapsto (a_1,-a_2,-a_3), \, (a_1,a_2,a_3) \mapsto (-a_1,a_2,-a_3),\, (a_1,a_2,a_3) \mapsto (-a_1,-a_2,a_3).
$$
This means that  $\widetilde{P} (u)$ can be applied not only for the original Grunert system \eqref{eq_Grunert_1},
but also for three similar systems, where exactly two of the numbers $a_1, a_2, a_3$ are taken with the opposite signs.
\end{remark}

\medskip

Note that the polynomial \eqref{eq_polptul1} can be obtained also by the command {\sl EliminationIdeal} in Maple.
For this goal one need to consider the ideal $J_1$ generated by polynomials $U_1= s_1^2 + s_2^2-2a_3s_1s_2 - d_3^2$,
$U_2= s_1^2 + s_3^2-2a_2s_1s_3 - d_2^2$, $U_3= s_2^2 + s_3^2-2a_1s_2s_3 - d_1^2$, and $U_4=u-s_1^2$ and find its elimination ideal with respect to
the variables $a_1,a_2,a_3,d_1,d_2,d_3, u$. This ideal is generated just by the polynomial \eqref{eq_polptul1}.
See also \cite[Equation (2)]{Rieck12}.

\medskip

Obviously, $2a_1a_2a_3 - a_1^2 - a_2^2 - a_3^2 + 1=0$ implies $\Lambda_4=\Lambda_3=0$.
Hence, we have the following helpful corollary from \eqref{eq_polptul1}.

\begin{corollary}\label{cor.2sol}
If $(a_1,a_2,a_3)\in\mathbb{BP}$, i.e.  $2a_1a_2a_3 - a_1^2 - a_2^2 - a_3^2 + 1=0$,  then $u=s_1^2$ satisfies the equation
$$
\Lambda_2\, u^2 +\Lambda_1\, u +\Lambda_0=0.
$$
\end{corollary}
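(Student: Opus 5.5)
This corollary follows immediately from the identity $\widetilde{P}(s_1^2)=0$ established just above, by reading off which coefficients vanish on $\mathbb{BP}$.

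First, I would recall how $\widetilde{P}$ arose. For any solution $(s_1,s_2,s_3)$ of the Grunert system \eqref{eq_Grunert_1} with $a_1\neq 0$, the rewriting \eqref{eq_Grunert_4} holds. The three consistency relations
$$s_2^2s_3=(s_2s_3)s_2,\quad (s_2s_3)s_3=s_3^2s_2,\quad (s_2s_3)^2=s_2^2s_3^2$$
then give the homogeneous linear system $c_{i,1}s_2+c_{i,2}s_3+c_{i,3}\cdot 1=0$. Because this system has the nonzero solution $(s_2,s_3,1)$, its determinant vanishes, so $\widetilde{P}(s_1^2)=0$. The same identity can also be taken from the elimination ideal computation, which shows $\widetilde{P}(u)\in J_1\cap\mathbb{R}[a_i,d_i,u]$. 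That route holds for every solution, with no restriction on $a_1$. In either case, for any solution we have $\widetilde{P}(u)=0$ with $u=s_1^2$.

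Second, I would substitute the constraint $1+2a_1a_2a_3-a_1^2-a_2^2-a_3^2=0$ defining $\mathbb{BP}$ \eqref{eq.pillowc}. By the explicit formulas, $\Lambda_4$ is $16$ times the square of this expression and $\Lambda_3$ is $32$ times it multiplied by a factor. Hence both vanish on $\mathbb{BP}$, and $\widetilde{P}(u)=\Lambda_2u^2+\Lambda_1u+\Lambda_0$, which gives the claim.

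The only point needing care is the division by $a_1$ in \eqref{eq_Grunert_4}. I would handle it by using the elimination-ideal characterization of $\widetilde{P}$, since membership in $J_1$ makes the vanishing hold identically on the solution variety. Alternatively, one can argue by continuity in $a_1$, because $\widetilde{P}$ is polynomial in all parameters. Beyond this, the argument is a direct substitution and presents no real obstacle.
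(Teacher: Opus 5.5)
Your proposal is correct and is the paper's argument: the paper gets $\widetilde{P}(s_1^2)=0$ from the determinant (or elimination-ideal) construction, and on $\mathbb{BP}$ the factor $1+2a_1a_2a_3-a_1^2-a_2^2-a_3^2$ kills $\Lambda_4$ and $\Lambda_3$, leaving the quadratic. Your extra care about $a_1=0$ goes beyond the paper, which divides by $a_1$ without comment, and the elimination-ideal route settles that case. The bare "continuity in $a_1$" alternative would also need the solution to persist under the perturbation, for instance by fixing $s_1,s_2,s_3$ and setting $d_1^2=s_2^2-2a_1s_2s_3+s_3^2$.
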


Note, that under the substitution in indices  $1\to 2\to 3\to 1$ for the values $a_i$, $d_i$, and $s_i$,
we easily get similar quadratic equations for $u=s_2^2$ and $u=s_3^2$.

\begin{lemma}\label{le.solpol}
If a point $(a_1,a_2,a_3)\in\mathbb{BP}\setminus\{\widetilde{A}, \widetilde{B}, \widetilde{C}\}$ is such that $\Lambda_2=\Lambda_1=\Lambda_0=0$,
then $(a_1,a_2,a_3)=(-\cos \angle BAC, -\cos \angle ABC,  -\cos \angle ACB)$.
\end{lemma}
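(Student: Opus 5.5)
Write $\alpha=\angle BAC$, $\beta=\angle ABC$, $\gamma=\angle ACB$. The plan is a direct algebraic computation on $\mathbb{BP}$.

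\textbf{Step 1 (use $\Lambda_0=0$).} By Remark~\ref{rem.zero.coeff}, $\Lambda_0=0$ forces $a_1=\pm\cos\alpha$.

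\textbf{Step 2 (parametrize the slice).} For fixed $a_1=c$ with $|c|<1$, the cubic $1+2a_1a_2a_3-a_1^2-a_2^2-a_3^2=0$ is a quadratic in $(a_2,a_3)$:
$$a_2^2+a_3^2-2c\,a_2a_3=1-c^2.$$
This is an ellipse, and it can be parametrized as $a_2=\cos\varphi$, $a_3=\cos(\varphi\mp\theta)$ with $c=\cos\theta$. Equivalently, we may write $a_2=\cos t$, $a_3=\cos s$ with $s\pm t=\pm\theta$ modulo sign choices. It is convenient to normalize the triangle by $d_1=\sin\alpha$, $d_2=\sin\beta$, $d_3=\sin\gamma$, using the law of sines.

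\textbf{Step 3 (restrict $\Lambda_1$ and $\Lambda_2$).} Substitute $a_1=\pm\cos\alpha$ and the parametrization into $\Lambda_1$ and $\Lambda_2$. Each becomes a trigonometric polynomial in one parameter $\varphi$. I expect both to factor into products of sines of expressions such as $\varphi\pm\beta$ and $\varphi\pm\gamma$ (half-angle forms). This expectation has a geometric basis. For $a_1=\cos\alpha$, the ellipse $E_A$ contains the limit points at $A$ (Proposition~\ref{pr.limitpoint.1}), so there the quadratic in $u=s_1^2$ has the root $u=0$, which is consistent with $\Lambda_0=0$. The common zeros of $\Lambda_1$ and $\Lambda_2$ on $E_A$ should then be exactly the points where the solution set degenerates, namely the circumcircle points $\widetilde B$ and $\widetilde C$, which lie on $E_A$ since their first coordinate is $\cos\alpha$.

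\textbf{Step 4 (the other ellipse).} On the ellipse $a_1=-\cos\alpha$, the factorization should show that $\Lambda_1=\Lambda_2=0$ only at $\widetilde A$ and at $(-\cos\alpha,-\cos\beta,-\cos\gamma)$.

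\textbf{Step 5 (a shortcut via symmetry).} By Remark~\ref{rem.invar.3}, $\widetilde P$ is invariant under flipping two signs. These flips relate $\widetilde B$ and $\widetilde C$ on $a_1=\cos\alpha$ to $(-\cos\alpha,-\cos\beta,-\cos\gamma)$ and $\widetilde A$ on $a_1=-\cos\alpha$. For example, $(a_1,a_2,a_3)\mapsto(-a_1,a_2,-a_3)$ sends $\widetilde C$ to $(-\cos\alpha,\cos\beta,\cos\gamma)=\widetilde A$, and sends $\widetilde B$ to $(-\cos\alpha,-\cos\beta,-\cos\gamma)$. So it suffices to solve $\Lambda_1=\Lambda_2=0$ on the single ellipse $a_1=\cos\alpha$ and show the common zeros there are exactly $\widetilde B,\widetilde C$. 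The flip maps the surface $\mathbb{BP}$ and the slices correctly, and it leaves the $\Lambda_i$ invariant. Hence the common zeros on $a_1=-\cos\alpha$ are exactly $\widetilde A$ and $(-\cos\alpha,-\cos\beta,-\cos\gamma)$. Discarding the excluded points $\widetilde A,\widetilde B,\widetilde C$ then yields the statement.

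\textbf{Main obstacle.} The hard step is the factorization of $\Lambda_1$ and $\Lambda_2$ restricted to $E_A$, which would be done with computer algebra. My first attempt would be to compute the resultant (or gcd) in $a_3$ of $\Lambda_1$ and the ellipse equation, after substituting $a_1=\cos\alpha$. I would check that it factors as $(a_2-\cos\beta)(a_2+\cos\beta)$ times a factor that does not vanish simultaneously with $\Lambda_2$. The common root of the remaining factor with $\Lambda_2$ would then be excluded by a second resultant, which should be a nonzero multiple of $\sin\alpha\sin\beta\sin\gamma$.
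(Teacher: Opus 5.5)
Your route matches the paper's. $\Lambda_0=0$ forces $a_1=\pm\cos\alpha$, and one then eliminates on the two resulting ellipses. The flip $(a_1,a_2,a_3)\mapsto(-a_1,a_2,-a_3)$ is a legitimate way to halve the work: it fixes $a_1a_2a_3$, hence $\mathbb{BP}$, and it fixes every $\Lambda_i$.

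The step you call the main obstacle is short. On the plane $a_1=\cos\alpha$ the constant part of $\Lambda_1$ cancels. Using $d_1^2+d_2^2-d_3^2=2d_1d_2\cos\gamma$ and $d_1^2+d_3^2-d_2^2=2d_1d_3\cos\beta$, one finds, identically in $a_2,a_3$,
\[
\Lambda_1\big|_{a_1=\cos\alpha}=-64\cos^2\alpha\,d_1^2d_2^2d_3^2\,\bigl(a_2\cos\gamma+a_3\cos\beta\bigr)^2 .
\]
The line $a_2\cos\gamma+a_3\cos\beta=0$ passes through the centre of the ellipse and meets it exactly at $\widetilde{B}$ and $\widetilde{C}$. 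So for $\cos\alpha\neq 0$, $\Lambda_1=0$ alone pins down the points. No second resultant and no use of $\Lambda_2$ is needed. This is what the paper does: it eliminates with $\Lambda_1$ and only afterwards checks that $\Lambda_2$ vanishes at the four points.

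The genuine gap is the case $\angle BAC=\pi/2$, which the lemma covers.

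\emph{Your symmetry claim fails there.} The two slices coincide ($a_1=0$), and the flip maps that slice to itself. So your claim that the common zeros on $a_1=\cos\alpha$ are exactly $\widetilde{B},\widetilde{C}$ is false. By your own symmetry argument, $\widetilde{A}$ and $(0,-\cos\beta,-\cos\gamma)$ are zeros there too.

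\emph{Your computation also breaks down.} The factor $\cos^2\alpha$ above shows that $\Lambda_1$ vanishes identically on this slice, since every term of $\Lambda_1$ contains $a_1$ or $d_1^2-d_2^2-d_3^2$. So the resultant you plan to compute is the zero polynomial, and your resultant chain yields nothing.

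\emph{The repair is to use $\Lambda_2$ directly.} With $a_1=0$ and $d_1^2=d_2^2+d_3^2$, it collapses to $16\,(d_2^2a_2^2-d_3^2a_3^2)^2$. On $a_2^2+a_3^2=1$ this vanishes exactly at $(0,\pm\cos\beta,\pm\cos\gamma)$, that is, at $\widetilde{A},\widetilde{B},\widetilde{C}$ and $(-\cos\alpha,-\cos\beta,-\cos\gamma)$. So the lemma survives.

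The paper's argument, which eliminates with $\Lambda_1$ only, has the same blind spot in this case. Your instinct to keep $\Lambda_2$ in play is exactly what is needed, but it must be applied to the whole slice, not to a leftover factor of a resultant.
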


\begin{proof}
The equality $\Lambda_0=0$ implies either $a_1=\frac{d_1^2 - d_2^2 - d_3^2}{2d_2d_3}$ or  $a_1=-\frac{d_1^2 - d_2^2 - d_3^2}{2d_2d_3}$.
Substituting this value in the equations $\Lambda_1=0$ and $2a_1a_2a_3 - a_1^2 - a_2^2 - a_3^2 + 1=0$ and eliminating after this $a_2$ and $a_3$ from the obtained system,
we get that $(a_1,a_2,a_3)$ is one of the following points:
{\scriptsize\begin{eqnarray*}
\left(-\frac{d_2^2 + d_3^2 - d_1^2}{2d_2d_3}, \frac{d_1^2 + d_3^2 - d_2^2}{2d_1d_3},\frac{d_1^2 + d_2^2 - d_3^2}{2d_1d_2}  \right),&&
\left(\frac{d_2^2 + d_3^2 - d_1^2}{2d_2d_3}, -\frac{d_1^2 + d_3^2 - d_2^2}{2d_1d_3},\frac{d_1^2 + d_2^2 - d_3^2}{2d_1d_2}  \right),\\
\left(\frac{d_2^2 + d_3^2 - d_1^2}{2d_2d_3}, \frac{d_1^2 + d_3^2 - d_2^2}{2d_1d_3},-\frac{d_1^2 + d_2^2 - d_3^2}{2d_1d_2}  \right),&&
\left(-\frac{d_2^2 + d_3^2 - d_1^2}{2d_2d_3}, -\frac{d_1^2 + d_3^2 - d_2^2}{2d_1d_3},-\frac{d_1^2 + d_2^2 - d_3^2}{2d_1d_2}  \right).
\end{eqnarray*}}
It is easy to verify that $\Lambda_2=0$ for all these points.
The first three points are exactly equal to $\widetilde{A}$, $\widetilde{B}$, and $\widetilde{C}$ by the cosine law. The fourth point is
$(a_1,a_2,a_3)=(-\cos \angle BAC, -\cos \angle ABC, -\cos \angle ACB)$. The lemma is proved.
\end{proof}

\begin{remark} We see that the condition $(a_1,a_2,a_3)\in\mathbb{BP}\setminus\{\widetilde{A}, \widetilde{B}, \widetilde{C}\}$ implies that there are
at most two solutions for $s_1$ in our problem. The same we can say about $s_2$ and $s_3$ of course. Therefore, we can get at most $8$ triples
$(s_1,s_2,s_3)$ that can be solutions of  the  Grunert system.
\end{remark}

In fact we have the following

\begin{prop}\label{pr.spp_sol_1}
The  Grunert system \eqref{eq_Grunert_4} has at most two solutions for any point
$(a_1,a_2,a_3)\in\mathbb{BP}\setminus\{\widetilde{A}, \widetilde{B}, \widetilde{C}\}$.
\end{prop}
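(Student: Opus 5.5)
The plan is to bound the number of solution triples $(s_1,s_2,s_3)$ with $s_i>0$ in two steps. First I bound the number of admissible values of $s_1$. Then I show that each admissible $s_1$ determines $(s_2,s_3)$ uniquely, apart from a degenerate case that I handle by cycling the indices.

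\emph{Step 1 (values of $s_1$).} By Corollary~\ref{cor.2sol}, for $(a_1,a_2,a_3)\in\mathbb{BP}$ the number $u=s_1^2$ is a root of $\Lambda_2u^2+\Lambda_1u+\Lambda_0$. By Lemma~\ref{le.solpol}, outside $\{\widetilde{A},\widetilde{B},\widetilde{C}\}$ this polynomial vanishes identically only at the point
$$
H^*=(-\cos\angle BAC,-\cos\angle ABC,-\cos\angle ACB).
$$
Away from $H^*$ there are therefore at most two positive values of $s_1$. By the cyclic substitution $1\to2\to3\to1$ the same holds for $s_2$ and for $s_3$.

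I treat $H^*$ directly by geometry. A solution with $s_i>0$ gives a point $D$, and by \eqref{eq.vol.1} it lies in $\Pi^0$. The conditions read $\angle BDC=\pi-\angle BAC$, $\angle ADC=\pi-\angle ABC$, $\angle ADB=\pi-\angle ACB$, so the three angles sum to $2\pi$ and $D$ lies inside the triangle. Such a $D$ lies on three prescribed circular arcs, and these meet in at most one point (the orthocenter, when the triangle is acute). Up to the reflection already factored out, this gives at most one solution there.

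\emph{Step 2 ($s_1$ determines $s_2,s_3$).} Fix an admissible $s_1$. The construction preceding \eqref{eq_polptul1} produces three linear equations $c_{i,1}s_2+c_{i,2}s_3+c_{i,3}=0$ whose coefficients depend only on $s_1$ and the data. If the $3\times 2$ matrix $(c_{i,1},c_{i,2})$ has rank $2$, then $(s_2,s_3)$ is uniquely determined by $s_1$, and Step 1 gives at most two solutions.

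\emph{Degenerate case (main obstacle).} Suppose that matrix has rank $\le 1$. I expect this to be the hardest part, and I would attack it in two ways.

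\begin{itemize}
\item \emph{Recycling.} Repeat the construction with $s_2$ or $s_3$ as the distinguished variable, which is permitted by the cyclic symmetry and by Remark~\ref{rem.invar.3}. Then show that the three rank conditions cannot hold simultaneously unless $a_i=\pm\cos$ of the corresponding angles, that is, unless $(a_1,a_2,a_3)$ is one of $\widetilde{A},\widetilde{B},\widetilde{C},H^*$ (compare Remark~\ref{rem.zero.coeff}).
\item \emph{Geometric fallback.} If the algebra becomes unwieldy, argue geometrically instead. Two distinct planar solutions $D\neq D'$ with $d(D,A)=d(D',A)=s_1$ would both lie on the circle of radius $s_1$ about $A$ and on the arc locus $\{X:\angle BXC=\arccos a_1\}$. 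The remaining angle conditions then force $D,D'$ onto the circumcircle $\mathbb{S}$, where $F$ takes only the values $\widetilde{A},\widetilde{B},\widetilde{C}$. This is excluded by hypothesis.
\end{itemize}

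Either route gives that distinct solutions have distinct $s_1$. Combined with Step 1, this yields at most two solutions for every $(a_1,a_2,a_3)\in\mathbb{BP}\setminus\{\widetilde{A},\widetilde{B},\widetilde{C}\}$.
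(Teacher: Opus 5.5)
Your Step~1 is the paper's own argument: Corollary~\ref{cor.2sol} with Lemma~\ref{le.solpol} gives at most two values of $s_1$ off $H^*$, and the orthocenter argument handles $H^*$. Step~2, however, has a genuine gap. Both bullets in the degenerate case are only plans, and the statement they are meant to deliver, that distinct solutions have distinct $s_1$, is false.

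Take $A=(0,2)$, $B=(-1,0)$, $C=(1,0)$, so $d_1=2$, $d_2=d_3=\sqrt5$, and $\mathbb{S}$ has center $(0,\tfrac{3}{4})$ and radius $\tfrac{5}{4}$. Put $D=(-2,0)$ and $D'=(2,0)$. Both points give $\bigl(1,\tfrac{1}{\sqrt2},\tfrac{1}{\sqrt2}\bigr)\in\mathbb{BP}\setminus\{\widetilde{A},\widetilde{B},\widetilde{C}\}$. The corresponding solutions of \eqref{eq_Grunert_1} are $(s_1,s_2,s_3)=(2\sqrt2,1,3)$ and $(2\sqrt2,3,1)$, as one checks directly. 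Neither point lies on $\mathbb{S}$, so the geometric fallback is wrong. Moreover, at $s_1=2\sqrt2$ the matrix $(c_{i,1},c_{i,2})$ has rank $\le 1$ at an actual solution, so the degenerate case genuinely occurs and cannot be ruled out. The bound of two still holds at this point, but not because $s_1$ separates the solutions.

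The paper's proof likewise just asserts that $s_2,s_3$ are determined by $s_1$, via its relations $P_5s_1s_2=P_6s_1^2+P_7$ and the analogue for $s_3$. The same example contradicts that assertion, so you have located the real crux, but it needs an actual argument. Your recycling route would have to prove that for every point there is one index $i$ such that the rank is $2$ at every admissible value of $s_i$ (here $i=2$ works). Knowing only that the three degeneracies never occur at the same solution is not enough, since three hypothetical solutions could collide pairwise in different coordinates.

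A short way to close the gap is with directed angles. For planar $D$, let $\phi_1,\phi_2,\phi_3\in\mathbb{R}/2\pi\mathbb{Z}$ be the directed angles from ray $DB$ to $DC$, from $DC$ to $DA$, and from $DA$ to $DB$. Then $\cos\phi_i=a_i$ and $\phi_1+\phi_2+\phi_3\equiv 0$. The admissible triples form a single pair $\pm(\phi_1,\phi_2,\phi_3)$: two admissible triples cannot differ in exactly one coordinate, and if they differ in exactly two, the third lies in $\{0,\pi\}$. For a fixed triple, $D$ lies on a circle or line through $B,C$ and on one through $A,B$. These meet in at most one point other than $B$ unless both equal $\mathbb{S}$, which would force $F(D)\in\{\widetilde{A},\widetilde{B},\widetilde{C}\}$. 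This gives at most two solutions everywhere, $H^*$ included.
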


\begin{proof}
We can find the general solution for the Grunert system \eqref{eq_Grunert_1}, where
$$
a_1 = \frac{1-t^2}{1+t^2},\quad  a_2 = \frac{1-s^2}{1+s^2}, \quad a_3 = \frac{(1-s^2)(1-t^2) + 4st}{(1+s^2)(1+t^2)}, \qquad t,s \in \mathbb{R}.
$$
This is a rational parametrization of the surface
$\mathbb{BP}$.
By eliminating the variables $s_1, s_2, s_3$ using Gr\"{o}bner bases, we can reduce our system to the following form.

There are $12$ polynomials $P_i=P_i(d_1,d_2,d_3,t,s)$, $1\leq i \leq 12$, such that
$$
P_1\cdot s_1^4+P_2\cdot s_1^2+P_3=0,
$$
$$
P_2^2-4P_1P_3= P_4^2\cdot (d_1 + d_2 + d_3)(d_1 + d_2 - d_3)(d_1  + d_3 - d_2)(d_2 + d_3 - d_1),
$$
$$
P_5\cdot s_1\cdot s_2=P_6 \cdot s_1^2+P_7,
$$
$$
(P_8\cdot s_1^2 +P_9)\cdot s_1\cdot s_3=P_{10}\cdot s_1^4+P_{11}\cdot s_1^2 +P_{12}.
$$
We can write down these polynomial explicitly.
The polynomial $P_1\cdot s_1^4+P_2\cdot s_1^2+P_3$ is obtained in Corollary \ref{cor.2sol} (before we expressed $a_1$, $a_2$, $a_3$ through $t$ and $s$).

If at least one of the coefficients $P_i=P_i(d_1,d_2,d_3,t,s)$, $i=1,2,3$, is non-zero for a given point
$(a_1,a_2,a_3)\in\mathbb{BP}\setminus\{\widetilde{A}, \widetilde{B}, \widetilde{C}\}$, then there are at most $2$ suitable values for $s_1$, while $s_2$ and $s_3$
are determines uniquely by a chosen $s_1$.

Let us suppose that $P_1=P_2=P_3=0$ for a given point $(a_1,a_2,a_3)$.
By Lemma~\ref{le.solpol} we obtain
$(a_1,a_2,a_3)=(-\cos \angle BAC, -\cos \angle ABC,  -\cos \angle ACB)$.
If there is a point $D$ such that  $(a_1,a_2,a_3)=(\cos \angle BDC, \cos \angle ADC,  \cos \angle ADB)$, then
$\angle BDC + \angle ADC + \angle ADB=(\pi-\angle BAC) +(\pi - \angle ABC) +(\pi  - \angle ACB)=3\pi-\pi=2\pi$.
Therefore, $D$ is situated inside the triangle $ABC$. It is easy to see that such a point does exist only if the triangle $ABC$ is acute-angled.
Indeed, it this case $D$
is located on each of the three circles that are obtained from $\mathbb{S}$ by symmetry with respect to the lines $AB$, $AC$, and $BC$.
In other words, the point $D$ must be the orthocenter of the triangle $ABC$ (see e.g. \cite[P.~37]{CoGre}, and since $D$
is inside the triangle, the triangle must be acute-angled.
Therefore, we have exactly one solution to the Grunert system \eqref{eq_Grunert_1}
(i.e.~$N(a_1,a_2,a_3)=1$) for $(a_1,a_2,a_3)=(-\cos \angle BAC, -\cos \angle ABC,  -\cos \angle ACB)$.
This argument completes the proof of the proposition.
\end{proof}

\section{Some important tools}

Let us consider some open subset $\Omega$ in $\mathbb{R}^n$, some $n$-dimensional $C^1$-smooth manifold $M$ and some $C^1$-smooth map
\begin{equation}\label{eq.smothmap}
h: \Omega \rightarrow M.
\end{equation}
For any subset $L\subset M$, $h^{-1}(L)$ denotes the pre-image of $L$ with respect to $h$, i.e $h^{-1}(L)=\{x \in \Omega\,|\, h(x)\in L\}$.

Denote by $\mathcal{D}$ the set of point in  $\Omega$, where $h$ is degenerate, i.e. the Jacobian  of $h$ (in some local coordinates for $M$) is zero:
$$
\mathcal{D}:=\{ x\in \Omega\,|\, |Jh(x)|=0\}.
$$
We need the following key result.

\begin{prop}\label{pr.numsol1}
Suppose that there is  $k \in \mathbb{N}$ such that the number of solutions of the equation $h(x)=b$ is at most $k$ for any $b\in M$.
Let $S$ be a closed connected subset of $M$ such that $h^{-1}(S)$
is a compact subset of $\Omega \setminus \mathcal{D}$. Then
the number of solutions of the equation $h(x)=b$ is the same for all $b\in S$.
\end{prop}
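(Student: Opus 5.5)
The plan is to show that the counting function $n(b):=\#h^{-1}(b)$ is locally constant on $S$. Since $S$ is connected, this forces $n$ to be constant on $S$. Local constancy will come from two facts: $h$ is a local diffeomorphism near $h^{-1}(S)$, and $h$ is proper over a neighbourhood of each point of $S$.

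\emph{Step 1 (lower semicontinuity).} Fix $b\in S$ and list its preimages $x_1,\dots,x_m$, where $m=n(b)\le k$. Each $x_i$ lies in $h^{-1}(S)\subset\Omega\setminus\mathcal{D}$, so $|Jh(x_i)|\neq 0$. By the inverse function theorem (for $C^1$ maps between $n$-manifolds, in local coordinates) there are pairwise disjoint open neighbourhoods $U_i\ni x_i$ in $\Omega$ such that $h|_{U_i}$ is a diffeomorphism onto an open neighbourhood $W_i$ of $b$. Then every $b'\in W:=\bigcap_i W_i$ has at least $m$ preimages, one in each $U_i$. Hence $n(b')\ge n(b)$ for $b'$ near $b$.

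\emph{Step 2 (upper semicontinuity).} I will show that there is a neighbourhood $W'\subset W$ of $b$ such that, for $b'\in W'\cap S$, every preimage of $b'$ lies in $\bigcup_i U_i$. Once this holds, injectivity of $h$ on each $U_i$ gives $n(b')\le m$.

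The argument is by contradiction. Suppose there are $b_j\to b$ with $b_j\in S$ and $y_j\in h^{-1}(b_j)\setminus\bigcup_i U_i$. Then every $y_j$ lies in the compact set $h^{-1}(S)$, so a subsequence converges to some $y\in h^{-1}(S)\subset\Omega$. By continuity $h(y)=b$, so $y=x_i$ for some $i$. But the $y_j$ stay in the closed set $\Omega\setminus\bigcup_i U_i$, so their limit cannot lie in the open set $U_i$. This contradiction proves the claim.

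Combining the two steps, $n$ is locally constant on $S$ with respect to the subspace topology, since Step 2 only compares $b$ with nearby points $b'\in S$. Each level set $\{b\in S : n(b)=c\}$ is therefore open in $S$. The sets for $c=0,1,\dots,k$ partition $S$, so each is also closed in $S$. Connectedness of $S$ then forces $n$ to be constant. The hypothesis that $n\le k$ ensures only finitely many values occur, although local constancy alone already suffices.

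The main obstacle is Step 2. Properness is only available over $S$, not over a neighbourhood of $S$ in $M$, so the sequence $b_j$ must be taken inside $S$. This is exactly where compactness of $h^{-1}(S)$ is used: it prevents preimages from escaping to the boundary of $\Omega$, to infinity, or into the degenerate set $\mathcal{D}$. The case $m=0$ needs no separate treatment, because Step 2 then says that points of $S$ near $b$ also have no preimages. Closedness of $S$ is not needed in this argument beyond what the compactness hypothesis already provides.
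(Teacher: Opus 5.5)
Your proof is correct, but it is organized differently from the paper's.

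\textbf{The paper's route.} It fixes the maximal fibre size $k_0$ on $S$, which is where the bound $k$ enters. It gets openness of $S(k_0)=\{b\in S:\ \#h^{-1}(b)=k_0\}$ from the inverse function theorem together with maximality. It proves closedness in three moves:
\begin{enumerate}
\item join $b_0$ to a boundary point $b_1$ of a component of $S(k_0)$ by a curve in $S$;
\item continue the $k_0$ branches of preimages along that curve;
\item pass to limit points in the compact set $h^{-1}(S)$, where non-degeneracy forces the $k_0$ limits to be distinct.
\end{enumerate}

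\textbf{Your route.} You show that your counting function $n(b)$ is locally constant at \emph{every} point of $S$. You pair lower semicontinuity from the inverse function theorem with upper semicontinuity from your sequence argument: for $b'\in S$ near $b$, no preimage can escape the neighbourhoods $U_i$.

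\textbf{What each buys.} Your argument needs only connectedness of $S$. The paper's curve construction tacitly assumes that such a path exists inside $S$, essentially path-connectedness. It also leaves the lifting of branches along the path unjustified. Your argument also does without the uniform bound $k$, since finiteness of each fibre already follows from compactness of $h^{-1}(S)$ and local injectivity. It does without closedness of $S$ as well, as you note. The paper's route gives the geometric picture of continuing solution branches along a path. Its maximality trick also makes the openness step immediate, with compactness needed only in the closedness step.
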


\begin{proof}
Suppose that $k_0$ is the maximum number for solutions of the equation $h(x)=b$ for $b\in S$. Take any point $b_0\in S$  with $k_0$ solutions.
The basic idea of the proof is as follows. For any other $b_1\in S$, we construct a continuous curve connecting $b_0$ to $b_1$ and construct
$k_0$ branches along this curve to solve the equation $h(x)=b$, where $b$ is the point of the above curve. The conditions of the proposition allow this to be done.

For a given $b_0\in S$  with $k_0$ solutions, there are $k_0$ points $x_i \in S$, $i=1,2,\dots, k_0$, such that $h(x_i)=b_0$ for all these points.
Since $h^{-1}(S)\bigcap \mathcal{D}=\emptyset$, there is
a neighborhood $V$ of $b_0$ in $M$ and  neighborhoods $U_i$ of $x_i$ in $\Omega$, $1\leq i \leq k_0$, such that $U_i\bigcap U_j =\emptyset$ for $i\neq j$ and
the restrictions of $h$ to $U_i$ are  diffeomorphisms onto $V$ (this follows from the Inverse Function theorem).
Therefore, for points $b$ sufficiently close to $b_0$, the equation $h(x)=b$ has at least $k_0$ solutions (at least one solution on every $U_i$, $1\leq i \leq k_0$).
But $k_0$ is the maximum possible number of such solutions. Hence, we have exactly $k_0$ solutions for all $b$ sufficiently close to $b_0$.
This means that the set $S(k_0)$ of points $b\in S$ with $k_0$ solutions of the equation $h(x)=b$ is open in $S$ (in the induced topology).

Let us suppose that there is a point $b_1\in  S\setminus S(k_0)$. Without loss of generality we may suppose that $b_1$ is on the boundary
of a connected component $W$ of $S(k_0)$ such that $b_0\in W$. Then we can consider a continuous curve $\varphi: [0,1] \rightarrow S$ such that
$\varphi(t) \in W$ for $t\in [0,1)$, $\varphi(0)=b_0$, $\varphi(1)=b_1$. Then we have $k_0$ pre-images of this curve, restricted on $[0,1)$,
say $\psi_i(t)$, such that $h(\psi_i(t))=\varphi(t)$ for all  $t\in [0,1)$ and $1\leq i \leq k_0$.
For any fixed $i$ the curve $\psi_i(t)$ has at least one limit point $T_i\in h^{-1}(S)$ for $t \to 1$
(recall that $h^{-1}(S)$ is a compact subset of $\Omega \setminus \mathcal{D}$).
Using the limit passage, we obtain that $h(T_i)=b_1$ for all $1\leq i \leq k_0$. Moreover, $T_i \neq T_j$ for $i \neq j$.
Otherwise, the Jacobian  $|Jh(T_i=T_j)|$ is zero (if it is not zero, then in some small neighborhood of $T_i=T_j$, there is only one pre-image
of $\varphi(t)$ for $t$ sufficiently close to $1$, but we have at least two such pre-images: $\psi_i(t)$ and $\psi_j(t)$).
Therefore, $T_i \neq T_j$ for $i \neq j$, and the point $b_1$
has at least $k_0$ solutions of the equation $h(x)=b_1$, which implies $b_1 \in S(k_0)$, that is impossible. Therefore, we get $S=S(k_0)$ that proves the proposition.
\end{proof}

\bigskip
The above-proven proposition has many useful applications. Below, we'll consider one simple application related to the problem we're interested in.
\bigskip

Now we consider a Cartesian coordinate system on the plane such that
$A=(1,0)$, $B=\left(\frac{u^2 - 1}{u^2 + 1}, \frac{2u}{u^2 + 1}\right)$, $C=\left(\frac{v^2 - 1}{v^2 + 1}, \frac{2v}{v^2 + 1}\right)$.

Now we consider a special map
\begin{equation}\label{eq.func_h}
h=(h_1,h_2,h_3):\Omega \rightarrow M=\mathbb{BP}\setminus \{(1,1,1),(-1,-1,1), (-1,1,-1), (1,-1,-1)\},
\end{equation}
where $\Omega=\mathbb{R}^2 \setminus \{A,B,C\}$ with points $D=(x,y)$,
$$
h_1=\cos \angle BDC =\frac{(W_2+2vy-2x)u^2+2(2v-y-v^2y)u+  W_2+2(xv^2+vy-u^2-v^2)}
{W_1^{1/2}W_2^{1/2}(u^2 + 1)^{1/2}(v^2 + 1)^{1/2}},
$$

$$
h_2=\cos \angle ADC =
\frac{(x^2 + y^2 - 2x + 1)v^2 - 2vy + x^2 + y^2 - 1}{W_2^{1/2}W_3^{1/2}(v^2 + 1)^{1/2}},
$$
$$
h_3=\cos \angle ADB =
\frac{(x^2 + y^2 - 2x + 1)u^2 - 2uy + x^2 + y^2 - 1}{W_1^{1/2}W_3^{1/2}(u^2 + 1)^{1/2}},
$$

\begin{eqnarray*}
W_1&=&(x^2 + y^2 - 2x + 1)u^2 - 4uy + x^2 + y^2 + 2x + 1,\\
W_2&=&(x^2 + y^2 - 2x + 1)v^2 - 4vy + x^2 + y^2 + 2x + 1,\\
W_3&=&x^2 + y^2 - 2x + 1.
\end{eqnarray*}

If $D \in \{A,B,C\}$ then $W_i=0$ for some $i\in \{1,2,3\}$.
Hence, we assume that
$D=(x,y)\not \in \left\{A=(1,0), B=\left(\frac{u^2 - 1}{u^2 + 1}, \frac{2u}{u^2 + 1}\right), C=\left(\frac{v^2 - 1}{v^2 + 1}, \frac{2v}{v^2 + 1}\right) \right\}$.

It is easy to check that

$$
\frac{\partial h_1}{\partial x}\frac{\partial h_2}{\partial y}-\frac{\partial h_1}{\partial y}\frac{\partial h_2}{\partial x}=
\frac{16(x^2 + y^2 - 1)(vx - v + y)(uvx - uv + uy + vy - x - 1)(u - v)^2}{W_1^{3/2}W_2^2W_3^{3/2}(u^2 + 1)^{1/2}(v^2 + 1)},
$$
$$
\frac{\partial h_1}{\partial x}\frac{\partial h_3}{\partial y}-\frac{\partial h_1}{\partial y}\frac{\partial h_3}{\partial x}=
\frac{16(x^2 + y^2 - 1)(uvx - uv + uy + vy - x - 1)(u - v)^2(ux - u + y)}{W_1^{2}W_2^{3/2}W_3^{3/2}(u^2 + 1)(v^2 + 1)^{1/2}},
$$
$$
\frac{\partial h_2}{\partial x}\frac{\partial h_3}{\partial y}-\frac{\partial h_2}{\partial y}\frac{\partial h_3}{\partial x}=
\frac{16(x^2 + y^2 - 1)(vx - v + y)(ux - u + y)(u - v)}{W_1^{3/2}W_2^{3/2}W_3^{3/2}(u^2 + 1)^{1/2}(v^2 + 1)^{1/2}}.
$$
This implies that the vectors $\left(\frac{\partial h_1}{\partial x},\frac{\partial h_2}{\partial x},\frac{\partial h_3}{\partial x}\right)$ and
$\left(\frac{\partial h_1}{\partial y},\frac{\partial h_2}{\partial y},\frac{\partial h_3}{\partial y}\right)$ are linearly dependent
if and only if $x^2+y^2=1$. Hence, we have proved the following result.

\begin{lemma}\label{le.degen1}
The above map $h=(h_1,h_2,h_3):\Omega \rightarrow M$ is $C^1$-smooth in
$\mathbb{R}^2\setminus \left\{A, B, C \right\}$
and is non-degenerate in all points of the set $\mathbb{R}^2\setminus \mathbb{S}$, where $\mathbb{S}=\{(x,y)\in \mathbb{R}^2\,|\, x^2+y^2=1\}$.
\end{lemma}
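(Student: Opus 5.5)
The lemma has two parts: smoothness of $h$ on $\Omega=\mathbb{R}^2\setminus\{A,B,C\}$, and non-degeneracy off $\mathbb{S}$. We treat them in turn and then reduce the second part to the three $2\times 2$ minors displayed just before the statement.

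\emph{Smoothness.} Each $h_i$ is a quotient of a polynomial in $(x,y)$ by a product of square roots of $W_1,W_2,W_3$ (and positive constants). Geometrically, $W_3=(x-1)^2+y^2$ is the squared distance $|DA|^2$. Likewise $W_1$ and $W_2$ are positive multiples of $|DB|^2$ and $|DC|^2$: indeed $W_1=(u^2+1)|DB|^2$ after expanding. Hence all $W_i>0$ on $\Omega$. Since the square root is smooth on $(0,\infty)$, each $h_i$ is $C^\infty$ on $\Omega$, in particular $C^1$. Moreover $h$ maps into $\mathbb{BP}$, since $D$ is coplanar with $A,B,C$ and \eqref{eq.vol.1} gives zero volume.

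We also check that the image avoids the four excluded vertices. The value $(1,1,1)$ would force all three angles at $D$ to vanish, so $A,B,C$ would lie on one ray from $D$, which is impossible for a non-degenerate triangle. The other excluded points would force two angles equal to $\pi$ and one equal to $0$, which is again a collinearity of $A,B,C$.

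\emph{Non-degeneracy.} Away from the four singular points, $\mathbb{BP}$ is a smooth surface: the gradient of $1+2x_1x_2x_3-x_1^2-x_2^2-x_3^2$ vanishes only at those four points. So $h$ is non-degenerate at $D$ precisely when the differential of $h$, viewed as a map into $\mathbb{R}^3$, has rank $2$. This holds exactly when the vectors $\partial_x h$ and $\partial_y h$ are linearly independent, i.e. when at least one of the three $2\times 2$ minors displayed above is nonzero. One should confirm these minor formulas with a computer algebra system; I take them as given.

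It remains to show that for $D\notin\mathbb{S}$ at least one minor is nonzero. The common factor $x^2+y^2-1$ is nonzero, and $(u-v)\ne 0$ because $B\ne C$. The remaining factors are interpreted geometrically.
\begin{itemize}
\item $vx-v+y=0$ is the line $AC$, since it vanishes at $A=(1,0)$ and a direct check shows it vanishes at $C$.
\item $ux-u+y=0$ is, in the same way, the line $AB$.
\item $uvx-uv+uy+vy-x-1=0$ is the line $BC$: it is linear in $(x,y)$ and one checks that it vanishes at $B$ and at $C$.
\end{itemize}
The third minor has factors $AC\cdot AB$, so it can vanish off $\mathbb{S}$ only when $D\in AB\cup AC$. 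The first minor has factors $AC\cdot BC$ and the second has factors $BC\cdot AB$. For all three minors to vanish simultaneously, $D$ must lie on at least two of the side lines. Since the triangle is non-degenerate, two side lines meet only at a vertex, and vertices are excluded from $\Omega$. Hence off $\mathbb{S}$ some minor is nonzero, and the rank is $2$.

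The main obstacle is the symbolic verification of the three minor formulas, and especially the identification of the factor $uvx-uv+uy+vy-x-1$ with the line $BC$. After that, the argument is only bookkeeping of which factors vanish where.
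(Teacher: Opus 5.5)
Your proposal is correct and follows the same route as the paper. The paper's argument also consists of the three displayed $2\times 2$ minors, from which it concludes that $\partial_x h$ and $\partial_y h$ are linearly dependent only on $\mathbb{S}$. Your proposal fills in the steps the paper leaves implicit: $W_3=|DA|^2$ and $W_1=(u^2+1)|DB|^2$; the linear factors are the side lines $AC$, $AB$, $BC$; and two side lines meet only at an excluded vertex.
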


\medskip

We recall the notations $x_0:=\cos \angle BAC$, $y_0:=\cos \angle ABC$, $z_0:=\cos \angle ACB$.
The point $(x_0,y_0,z_0)$ is a common point of three planes containing
the limit solid ellipses $\operatorname{Lim}(A)$, $\operatorname{Lim}(B)$, and $\operatorname{Lim}(C)$
(the equations of these planes are  $x=x_0$, $y=y_0$, $z=z_0$, respectively; see Proposition \ref{pr.limitpoint.1}).
Moreover, $(x_0,y_0,z_0)$ lies in the interior of $\mathbb{P}$, on the surface $\mathbb{BP}$, and outside of $\mathbb{P}$ in accordance with the property of
$\triangle ABC$ to be  acute-angled, right-angled and obtuse-angled respectively (which is equivalent to $x_0$ >0, $x_0=0$, $x_0<0$, respectively, if $y_0>0$ and $z_0>0$).
It is easy to see from the equality $1+2xyz-x^2-y^2-z^2=4xyz$ for
$(x,y,z)=(\cos \angle BAC, \cos \angle ABC, \cos \angle ACB)$.
\smallskip

Let us recall some important properties of the set $\mathbb{P}$ \eqref{eq_pil} and its boundary $\mathbb{BP}$ \eqref{eq.pillowc}, see e.g. \cite[Proposition~3]{NikNik2025}.
It is known that $\mathbb{P}$ (the pillow) is a convex subset of the cube $[-1,1]^3 \subset \mathbb{R}^3$ containing a tetrahedron $TP$ with the vertices
 $(1,1,1)$, $(1,-1,-1)$, $(-1,1,-1)$, and $(-1,-1,1)$. Moreover, the surface $\mathbb{BP}$ (the boundary of  $\mathbb{P}$) is smooth except four vertices of
$TP$ and has positive Gaussian curvature
in all points except the edges of the tetrahedron $TP$.
This implies the following property: any segment with non-zero length in $\mathbb{BP}$
is a part of some edge of the tetrahedra $TP$ (with the vertices  $(1,1,1)$, $(1,-1,-1)$, $(-1,1,-1)$, and $(-1,-1,1)$).
\smallskip

Now, we consider $8$ open connected subsets of
\begin{equation}\label{eq.nd}
ND:=\left\{ (x,y,z)\in \mathbb{R}^3\,|\, x\neq x_0, y\neq y_0, z\neq z_0 \right\}.
\end{equation}
We define $\Omega(i,j,k)$ to be the set of points $(x,y,z)$ from $ND$ such that $i=\sgn(x-x_0)$, $j=\sgn(y-y_0)$, $k=\sgn(z-z_0)$.
Obviously, the boundary of any of these 8 sets is part of the union of three planes containing the limit ellipses $\operatorname{Lim}(A)$,
$\operatorname{Lim}(B)$, and $\operatorname{Lim}(C)$ (any of these sets is located in its own octant).

Now we consider the $C^1$-smooth surface
\begin{equation}\label{eq.surf_M}
M=\mathbb{BP}\setminus \{(1,1,1),(-1,-1,1), (-1,1,-1), (1,-1,-1)\}
\end{equation}
and its 8 open subsets $\mathbb{BP}(i,j,k):=M\bigcap \Omega(i,j,k)$ for various vales of $i,j,k$.
It is clear that the points $(1,1,1)$, $(-1,-1,1)$, $(-1,1,-1)$, $(1,-1,-1)$ are in the closures of $\mathbb{BP}(+,+,+)$, $\mathbb{BP}(-,-,+)$, $\mathbb{BP}(-,+,-)$, and
$\mathbb{BP}(+,-,-)$, respectively.
\medskip

It should be noted that $\mathbb{BP}(i,j,k)=\emptyset$ for some $i,j,k$ and some bases $ABC$, see Lemma \ref{le.obtusen} below.
On the other hand, $\mathbb{BP}(i,j,k)$ should not be connected in general. In any case, we have the following result.

\begin{lemma}\label{le.noncon1}
Any $\mathbb{BP}(i,j,k)$ has at most two connected components.
\end{lemma}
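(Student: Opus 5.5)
The plan is to trade the surface $\mathbb{BP}$ for an explicit planar parameter domain in which every defining inequality of $\Omega(i,j,k)$ becomes a linear or interval condition. Then $\mathbb{BP}(i,j,k)$ appears as the continuous image of a union of convex polygons, and we count the pieces. Every point of $\mathbb{BP}$ has the form
$$
(x_1,x_2,x_3)=\bigl(\cos\theta_1,\ \cos\theta_2,\ \cos(\theta_1-\theta_2)\bigr).
$$
Indeed, for such triples $1+2x_1x_2x_3-x_1^2-x_2^2-x_3^2=0$ holds identically. Conversely, given $(x_1,x_2,x_3)\in\mathbb{BP}$, put $\theta_1=\arccos x_1$. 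The defining equation, viewed as a quadratic in $x_3$, has roots $\cos(\theta_1\pm\arccos x_2)$, so a suitable $\theta_2=\pm\arccos x_2$ works. (This is the same rational parametrization as in the proof of Proposition~\ref{pr.spp_sol_1}, with $t=\tan(\theta_1/2)$ and $s=\tan(\theta_2/2)$.) So I would use the map
$$
\Phi:[0,\pi]\times\mathbb{R}/2\pi\mathbb{Z}\to\mathbb{BP},\qquad \Phi(\theta_1,\theta_2)=\bigl(\cos\theta_1,\cos\theta_2,\cos(\theta_1-\theta_2)\bigr).
$$
This map is continuous and surjective. It is injective on the interior of the cylinder, and it collapses only the boundary circles $\theta_1=0$ and $\theta_1=\pi$ (which map onto edges of $TP$). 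The four points removed in \eqref{eq.surf_M} correspond to $\theta_2\in\{0,\pi\}$ on these boundary circles.

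Next, write $x_0=\cos\alpha_0$, $y_0=\cos\beta_0$, $z_0=\cos\gamma_0$, where $\alpha_0,\beta_0,\gamma_0\in(0,\pi)$ are the angles of the triangle and $\alpha_0+\beta_0+\gamma_0=\pi$. The sign conditions defining $\Omega(i,j,k)$ translate as follows.
\begin{itemize}
\item The condition $\sgn(x_1-x_0)=i$ means $\theta_1\in(0,\alpha_0)$ or $\theta_1\in(\alpha_0,\pi)$, an interval in either case.
\item The condition $\sgn(x_2-y_0)=j$ means $\theta_2$ lies in the arc $(-\beta_0,\beta_0)$ or in the complementary arc $(\beta_0,2\pi-\beta_0)$. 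Each is a single arc of the circle.
\item The condition $\sgn(x_3-z_0)=k$ means $w=\theta_1-\theta_2 \pmod{2\pi}$ lies in the arc $(-\gamma_0,\gamma_0)$ or in its complementary arc.
\end{itemize}
On the universal cover we may take $\theta_2$ in a lifted interval $I_2$ of length less than $2\pi$. Then the first two conditions define an open rectangle $R=I_1\times I_2$. The third condition, lifted, is a union of parallel open diagonal strips $\{\theta_1-\theta_2\in J+2\pi m\}$, where $J$ is an interval of length less than $2\pi$.

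The key counting step is that the linear function $\theta_1-\theta_2$ ranges over an interval of length $|I_1|+|I_2|<\pi+2\pi=3\pi$ on $R$. I would show that at most two translates $J+2\pi m$ can meet $R$. Each translate meets $R$ in a convex set, so $\Phi^{-1}(\mathbb{BP}(i,j,k))$ is a union of at most two convex, hence connected, polygons, and $\mathbb{BP}(i,j,k)$ is the union of their continuous images. This proves the claim.

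The main obstacle is the bound on the number of strips. A naive length count ($3\pi$ against a period of $2\pi$) allows three translates, so it is not enough by itself. I would settle it by a direct case check on the eight sign patterns, using the explicit endpoints. For example, for $(+,+,+)$ the difference $\theta_1-\theta_2$ ranges over $(-\beta_0,\alpha_0+\beta_0)$, an interval of length $\alpha_0+2\beta_0<2\pi$. Only $m=0$ can occur there, since $2\pi-\gamma_0>\alpha_0+\beta_0$. For the complementary arcs of length $2\pi-2\beta_0$ or $2\pi-2\gamma_0$, the constraint $\alpha_0+\beta_0+\gamma_0=\pi$ should exclude a third strip.

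A secondary point is to check that the removed vertices and the collapsed boundary circles $\theta_1\in\{0,\pi\}$ cause no trouble. These sets lie outside the open rectangle, so they neither split a piece nor create a new one.
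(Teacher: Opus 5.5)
Your proof is correct in substance and takes a genuinely different route from the paper's.

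\textbf{Comparison with the paper.} The paper argues by radial projection. A ray from $(x_0,y_0,z_0)$ into the cone $\Omega(i,j,k)$ that enters the interior of the convex body $\mathbb{P}$ meets $\mathbb{BP}$ in a near point $L'$ and a far point $L''$. The near points and the far points each sweep out one connected sheet, and a tangent ray glues the two sheets together.

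That argument is coordinate-free. It uses only convexity of $\mathbb{P}$ and the fact that segments in $\mathbb{BP}$ lie on edges of $TP$. It also directly produces the near/far labelling $\mathbb{BP}(i,j,k)'$, $\mathbb{BP}(i,j,k)''$ used later (e.g.\ $N(+,-,-)'=2$, $N(+,-,-)''=0$). On the other hand, it asserts the connectedness of the sheets without proof.

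Your angle parametrization replaces convexity by interval arithmetic, so every step can be checked explicitly. The endpoints also give existence criteria for free: the second strip for $(+,-,-)$ occurs iff $\alpha_0>\pi/2$, and $(-,+,+)$ has a strip iff $\alpha_0<\pi/2$, in line with Lemma~\ref{le.obtusen}.

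Its limitation is that strips only bound the number of components from above. Pieces can be glued along the fold circles $\theta_1\in\{0,\pi\}$; for $(-,-,-)$ there are always two strips, glued along $\theta_1=\pi$. It also does not by itself identify which piece is the near one.

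\textbf{Two things to tidy up.} First, the step you left open needs no eight-case check.
Since $I_1\subset[0,\pi]$ and the $j$-arc lifts into $(-\pi,\pi)$ or $(0,2\pi)$, the values of $\theta_1-\theta_2$ on $R$ lie in an interval $(p\pi,(p+3)\pi)$ with $p\in\{-1,-2\}$. Each lifted $k$-strip lies in some $(n\pi,(n+2)\pi)$, with $n$ odd for $k=+$ and even for $k=-$. Finally, $(p\pi,(p+3)\pi)$ meets $(n\pi,(n+2)\pi)$ only for $n\in\{p-1,p,p+1,p+2\}$, which contains exactly two integers of each parity. This does not even use $\alpha_0+\beta_0+\gamma_0=\pi$.

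Second, the boundary circles cannot simply be discarded. With open $I_1$ you lose genuine points of $\mathbb{BP}(i,j,k)$, such as $(1,t,t)\in\mathbb{BP}(+,+,+)$ for $t$ close to $1$. Take $I_1=[0,\alpha_0)$ or $(\alpha_0,\pi]$ instead. The pieces stay convex. $\Phi$ maps their union onto $\mathbb{BP}(i,j,k)$ once the preimages $(0,0),(0,\pi),(\pi,0),(\pi,\pi)$ of the four vertices are deleted. Deleting finitely many boundary points from a convex planar set with nonempty interior does not disconnect it.
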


\begin{proof} Let us consider some non-connected $\mathbb{BP}(i,j,k)$. Let $l$ be any ray through $\Omega(i,j,k)$ issuing from the point $(x_0,y_0,z_0)$.

If $l$ intersects the interior of $\mathbb{P}$, then $l \bigcap\mathbb{P}$ is a segment and
$l \bigcap\mathbb{BP}$ is a two-point set $\{L',L''\}$, where $L'$ is closer to $(x_0,y_0,z_0)$ than $L''$.
The union of all such points $L'$ (as well as of all points $L''$) is an open connected region in $\mathbb{BP}$
(it is important that the union is taken on all rays $l$ containing some points of the interior of $\mathbb{P}$).

Let us suppose that there is a ray $\tilde{l}$ such that $\tilde{l} \bigcap \mathbb{BP} \neq \emptyset$ and
$\tilde{l}$ does not intersect the interior of $\mathbb{P}$. In this case $\tilde{l} \bigcap \mathbb{BP}$ is a one-point set.
Indeed, $ABC$ is non-degenerate, hence $|s|<1$ for all $s \in \{x_0,y_0,z_0\}$, but any segment in $\mathbb{BP}$
is a part of some edge of the tetrahedra with the vertices  $(1,1,1)$, $(1,-1,-1)$, $(-1,1,-1)$, and $(-1,-1,1)$ (see discussion above).
In this case, $\mathbb{BP}(i,j,k)$ has only one component
(the two components with points of types $L'$ and $L''$ are glued together at the point $\tilde{l}$ and at similar points of other rays that intersect
$\mathbb{P}$ at exactly some points of $\mathbb{BP}$).

If such a ray $\tilde{l}$ does not exist, then
$\mathbb{BP}(i,j,k)$ has exactly two connected components (one component with all points $L'$ and other with all points $L''$).
\end{proof}
\smallskip

\begin{remark} It is easy to prove the following result:
If $\mathbb{BP}(i,j,k)$ has two connected components, then the set $\mathbb{BP}(-i,-j,-k)$ is empty.
\end{remark}

In what follows, if $\mathbb{BP}(i,j,k)$ is not connected, then $\mathbb{BP}(i,j,k)'$ and $\mathbb{BP}(i,j,k)''$ are its component containing
the points $L'$ and $L''$ (from the proof of Lemma \ref{le.noncon1}), respectively. It is mean that $\mathbb{BP}(i,j,k)'$ is closer to the point
$(x_0,y_0,z_0)$ than $\mathbb{BP}(i,j,k)''$.
\medskip

For a given $\mathbb{BP}(i,j,k)$ we denote by $\partial \mathbb{BP}(i,j,k)$ its boundary in  $\mathbb{BP}$.
If $\mathbb{BP}(i,j,k)$ is connected, then $\partial \mathbb{BP}(i,j,k)$ contains a union of three elliptic arcs
(of the ellipses $E_A$, $E_B$, $E_C$) and (in addition) at most one
of the points $(1,1,1)$, $(-1,-1,1)$, $(-1,1,-1)$, $(1,-1,-1)$. If $\mathbb{BP}(i,j,k)$ is not connected, then we consider its two components
$\mathbb{BP}(i,j,k)'$ and $\mathbb{BP}(i,j,k)''$ separately.

Let us consider $U$, any connected component of some $\mathbb{BP}(i,j,k)$.
Now for any $\varepsilon >0$ we consider
\begin{equation}\label{eq.eps1}
U^{\varepsilon}=\left\{(x,y,z) \in U\,|\,d\Bigl((x,y,z),\partial U\Bigr)\leq \varepsilon \right\},
\end{equation}
where $d$ is the Euclidean distance in $\mathbb{R}^3$. It is clear that $U^{\varepsilon}$ is a compact and connected subset in $U$ and
$$
U=\bigcup_{\varepsilon >0}\,\, U^{\varepsilon}.
$$

\begin{lemma}\label{le.numsol1}
For any connected component $U$ of a given $\mathbb{BP}(i,j,k)$ and any $\varepsilon >0$,
the number of solutions of the system $h(x,y)=b$ {\bf(}see \eqref{eq.func_h}{\bf)} is constant for all $b \in U^{\varepsilon}$.
\end{lemma}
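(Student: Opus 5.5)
The plan is to apply Proposition~\ref{pr.numsol1} to the map $h$ from \eqref{eq.func_h}, taking $S=U^{\varepsilon}$. Three things must be checked: a uniform bound $k$ on the number of solutions, that $S$ is closed and connected, and that $h^{-1}(S)$ is a compact subset of the domain avoiding the degeneracy set $\mathcal{D}$. The discussion after \eqref{eq.eps1} already asserts that $U^{\varepsilon}$ is compact and connected, and I take that as given.

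\textbf{Uniform bound.} The bound fails on all of $M$, because the points $\widetilde{A},\widetilde{B},\widetilde{C}$ have infinitely many preimages (whole arcs of $\mathbb{S}$). So I shrink the target.
\begin{itemize}
\item Put $M'=M\cap ND$, an open subset of $M$ and hence a $2$-dimensional $C^1$-manifold.
\item Put $\Omega'=h^{-1}(M')$, which is open in $\Omega$ because $h$ is continuous.
\item Each of $\widetilde{A},\widetilde{B},\widetilde{C}$ has a coordinate equal to $x_0$, $y_0$ or $z_0$, so none lies in $ND$. Hence $M'$ excludes them, and $U\subset M'$.
\item For $D$ in the plane, the distances $(s_1,s_2,s_3)=(d(D,A),d(D,B),d(D,C))$ determine $D$ uniquely, since $A,B,C$ are not collinear. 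So distinct solutions of $h(D)=b$ give distinct solutions of the Grunert system \eqref{eq_Grunert_1}.
\item Proposition~\ref{pr.spp_sol_1} then gives at most $k=2$ solutions for every $b\in M'$.
\end{itemize}
Proposition~\ref{pr.numsol1} is then applied to $h\colon\Omega'\to M'$.

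\textbf{Avoiding degeneracy.} By Lemma~\ref{le.degen1}, $h$ is non-degenerate off the circle $\mathbb{S}$. Every $D\in\mathbb{S}\setminus\{A,B,C\}$ satisfies $h(D)\in\{\widetilde{A},\widetilde{B},\widetilde{C}\}$, and these points are not in $U$. Therefore $h^{-1}(U^{\varepsilon})\cap\mathbb{S}=\emptyset$, and in particular $h^{-1}(U^{\varepsilon})\cap\mathcal{D}=\emptyset$.

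\textbf{Compactness (the main step).} Since $h$ is continuous, $h^{-1}(U^{\varepsilon})$ is closed in $\Omega$. To make it compact in $\mathbb{R}^2\setminus\{A,B,C\}$, I will show it is bounded and stays a positive distance from $A$, $B$ and $C$.
\begin{itemize}
\item \emph{Boundedness.} $F(D)\to(1,1,1)$ as $\|D\|\to\infty$. The point $(1,1,1)$ is not in $M$, so the compact set $U^{\varepsilon}\subset M$ lies at positive distance from it. Hence no sequence in the preimage escapes to infinity.
\item \emph{Staying away from the vertices.} By Proposition~\ref{pr.limitpoint.1}, every limit point of $h(D)$ as $D\to A$ (respectively $B$, $C$) lies in the plane $x=x_0$ (respectively $y=y_0$, $z=z_0$). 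The compact set $U^{\varepsilon}\subset ND$ is at positive distance from each of these planes. So no sequence in $h^{-1}(U^{\varepsilon})$ can converge to $A$, $B$ or $C$.
\end{itemize}
Consequently $h^{-1}(U^{\varepsilon})$ is a closed, bounded subset of $\mathbb{R}^2$ bounded away from $A,B,C$. It is therefore a compact subset of $\Omega'\setminus\mathcal{D}$.

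With all hypotheses verified, Proposition~\ref{pr.numsol1} shows that the number of solutions of $h(x,y)=b$ is the same for all $b\in U^{\varepsilon}$.
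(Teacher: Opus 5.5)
Your proof is correct and follows the same route as the paper. You apply Proposition~\ref{pr.numsol1} with $S=U^{\varepsilon}$, rule out preimages on $\mathbb{S}$ because $\widetilde{A},\widetilde{B},\widetilde{C}\notin U$, and get boundedness from $h(D)\to(1,1,1)$. Your two extra checks fill in details the paper leaves implicit: restricting to $M\cap ND$ so that the uniform bound $k=2$ genuinely holds, and using $h_1(D)\to x_0$ (etc.) to keep the preimage away from $A,B,C$, since the paper's ``$V$ is closed'' only gives closedness in $\mathbb{R}^2\setminus\{A,B,C\}$.
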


\begin{proof}
By Lemma \ref{le.degen1}, degenerate points of $h$ form the set $\mathcal{D}=\{ x\in \mathbb{R}^2\,|\, |Jh(x)|= 0\}=\mathbb{S}$.
According to the conditions of Proposition \ref{pr.numsol1}, it suffices to show that the preimage $V:=h^{-1}\bigl(U^{\varepsilon}\bigr)$ is a compact
subset in  $\mathbb{R}^2\setminus \mathbb{S}$.

First, if $V\bigcap \mathbb{S} \neq \emptyset$, then there is $w\in V\bigcap S$, and so $h(w)\in   U^{\varepsilon}$.
In this case $w \in \mathbb{S}\setminus \{A,B,C\}$, hence, $h(w) \in \{\widetilde{A}, \widetilde{B}, \widetilde{C}\}$ which is impossible.
Indeed, all these three points are on the union  $E_A \bigcup E_B \bigcup E_C$ of three ellipses and cannot be situated in $U^{\varepsilon}$ for positive
$\varepsilon$. Therefore, $V \subset \mathbb{R}^2\setminus \mathbb{S}$. It is clear that $V$ is closed. Hence, if $V$ is not compact, then there is a sequence
$\{w_n\}_{n\in \mathbb{N}}$, $w_n \in V$, such that $\|w_n\| \to \infty$ when $n \to \infty$. But in this case
$h(w_n) \to(1,1,1)\in \partial \mathbb{BP}(+,+,+)$, which contradict to the inclusion $h(w_n)\in  U^{\varepsilon}$ for all $n$.
The lemma is proved.
\end{proof}
\smallskip

Since $U=\bigcup_{\varepsilon >0}\,\, U^{\varepsilon}$, we immediately obtain the following important result.

\begin{corollary}\label{co.numsol1}
For any connected component $U$ of $\mathbb{BP}(i,j,k)$, the number of solutions of the system $h(x,y)=b$ is constant for all $b \in U$.
\end{corollary}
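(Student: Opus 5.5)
The corollary follows from Lemma~\ref{le.numsol1} together with the exhaustion $U=\bigcup_{\varepsilon>0}U^{\varepsilon}$. The plan is to show that any two points of $U$ have the same number of solutions of $h(x,y)=b$ by placing both in a single set to which Lemma~\ref{le.numsol1} applies.

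Fix two points $b_1,b_2\in U$. Since $U$ is an open subset of the surface $M$, each $b_i$ lies at a positive distance from $\partial U$. The definition \eqref{eq.eps1} should be read as ``distance at least $\varepsilon$ from $\partial U$'', since only then is $U^{\varepsilon}$ compact and do these sets exhaust $U$ as $\varepsilon\to 0$. Accordingly, I choose $\varepsilon>0$ smaller than both $d(b_1,\partial U)$ and $d(b_2,\partial U)$, so that $b_1,b_2\in U^{\varepsilon}$.

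The one point that needs care is that Lemma~\ref{le.numsol1} is applied through Proposition~\ref{pr.numsol1}, which requires $U^{\varepsilon}$ to be connected. It may not be, so I would instead work with a connected compact set containing both points. Since $U$ is an open connected subset of a manifold, it is path-connected. Take a path $\gamma$ in $U$ from $b_1$ to $b_2$. Its image is compact, so it has positive distance $\delta$ from $\partial U$.

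Put $S=\gamma([0,1])$. This set is compact and connected, and its preimage $h^{-1}(S)$ avoids $\mathbb{S}$ and is bounded, by exactly the argument given in the proof of Lemma~\ref{le.numsol1}. That argument uses only two facts: $S$ stays away from $\widetilde{A},\widetilde{B},\widetilde{C}$, which lie on $E_A\cup E_B\cup E_C$, and $S$ stays away from $(1,1,1)$. Both hold because $S$ is a compact subset of $U$. Proposition~\ref{pr.numsol1}, with the bound $k=2$ from Proposition~\ref{pr.spp_sol_1}, then gives that $b_1$ and $b_2$ have the same number of solutions. Since $b_1,b_2\in U$ were arbitrary, the number of solutions is constant on $U$.
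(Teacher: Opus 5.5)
Your argument is correct and is essentially the paper's: the paper observes $U=\bigcup_{\varepsilon>0}U^{\varepsilon}$ and invokes Lemma~\ref{le.numsol1}, while you apply Proposition~\ref{pr.numsol1} to a path image $S=\gamma([0,1])\subset U$. This is a genuine improvement, since the paper only asserts that $U^{\varepsilon}$ is connected, and you are right that the inequality in \eqref{eq.eps1} must be reversed. One step should be stated explicitly: compactness of $h^{-1}(S)$ also requires closedness in $\mathbb{R}^2$, i.e.\ that preimages do not accumulate at $A,B,C$. This holds because $h_1(D)\to x_0$ as $D\to A$ (and $h_2\to y_0$, $h_3\to z_0$ as $D\to B, C$), while the compact set $S\subset U$ stays a positive distance from the planes $x=x_0$, $y=y_0$, $z=z_0$.
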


It should be noted that the system $h(x,y)=b$  (see \eqref{eq.func_h}) is the Grunert system restricted to the set $\Pi^0 \setminus \{A,B,C\}$.
On the other hand, we know that any solution of Grunert system for a point $b \in \mathbb{BP}$ is from $\Pi^0 \setminus \{A,B,C\}$.

\section{The number of solutions on the ellipses $E_A$, $E_B$, and $E_C$}

We know that $\widetilde{A}, \widetilde{B} \in E_C$, $\widetilde{A}, \widetilde{C} \in E_B$, and
$\widetilde{B}, \widetilde{C} \in E_A$, see \eqref{eq.spec.points.1}.
Note that $\widetilde{A}\in E_A$ if and only if $\angle BAC =\pi/2$.

Recall that the solutions of the Grunert system \eqref{eq_Grunert_1} for $(a_1,a_2,a_3) \in \{\widetilde{A}, \widetilde{B}, \widetilde{C}\}$
consist of an arc of the circle $\mathbb{S}$.

It is useful to consider  points with the following properties: $\widehat{A}\in E_B \bigcap E_C \setminus \{\widetilde{A}\}$,
$\widehat{B}\in E_A \bigcap E_C \setminus \{\widetilde{B}\}$, $\widehat{C}\in E_A \bigcap E_B \setminus \{\widetilde{C}\}$.
Since the intersection of any two such ellipses contains exactly two point (for a non-degenerate base), the above property determines the points
$\widehat{A}$, $\widehat{B}$, and $\widehat{C}$.

It is easy to check (using \eqref{eq.pillowc}) that
\begin{eqnarray}\label{eq.spec.points.2} \notag
\widehat{A}&=&\Bigl(\cos |\angle ABC - \angle ACB|, \,\cos \angle ABC,  \,\cos \angle ACB\Bigl),\\
\widehat{B}&=&\Bigl(\cos \angle BAC,  \,\cos |\angle BAC -\angle ACB |,  \,\cos \angle ACB\Bigl),\\\notag
\widehat{C}&=&\Bigl(\cos \angle BAC,  \,\cos \angle ABC,  \,\cos |\angle BAC - \angle ABC|\Bigl).
\end{eqnarray}

\begin{lemma}\label{le.numsorel0}
We have $\left\{\widetilde{A}, \widetilde{B}, \widetilde{C} \right\}\bigcap \left\{\widehat{A}, \widehat{B}, \widehat{C} \right\}\neq \emptyset$
if and only if the triangle $ABC$ is rectangular. If, say, $\angle BAC =\pi/2$, then $\widehat{B}= \widehat{C}=\widetilde{A}$
\end{lemma}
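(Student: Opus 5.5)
Write $\alpha=\angle BAC$, $\beta=\angle ABC$, $\gamma=\angle ACB$, so that $\alpha+\beta+\gamma=\pi$ and all three angles lie in $(0,\pi)$. The plan is to compare coordinates directly using the explicit formulas \eqref{eq.spec.points.1} and \eqref{eq.spec.points.2}. Cosine is injective on $[0,\pi]$, so an equality $\cos\theta=\pm\cos\varphi$ with $\theta,\varphi\in[0,\pi]$ translates into $\theta=\varphi$ (plus sign) or $\theta=\pi-\varphi$ (minus sign).

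First compare a tilde point with the hat point of the same letter, say $\widetilde{A}=\widehat{A}$. The second coordinates give $\cos\beta=\cos\beta$, which is automatic. The first coordinates give $-\cos\alpha=\cos|\beta-\gamma|$, that is, $\cos(\beta+\gamma)=\cos|\beta-\gamma|$. Both arguments lie in $[0,\pi]$, so $\beta+\gamma=|\beta-\gamma|$, which forces $\beta=0$ or $\gamma=0$. That is impossible for a non-degenerate triangle, so this case never occurs. The same argument handles $\widetilde{B}\neq\widehat{B}$ and $\widetilde{C}\neq\widehat{C}$.

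Next compare points with different letters, say $\widetilde{A}=\widehat{B}$. The first coordinates give $-\cos\alpha=\cos\alpha$, so $\cos\alpha=0$ and $\alpha=\pi/2$. The third coordinates agree automatically. The second coordinates give $\cos\beta=\cos|\alpha-\gamma|$, so $\beta=|\alpha-\gamma|$. When $\alpha=\pi/2$ this holds: $\beta+\gamma=\pi/2$ gives $\beta=\pi/2-\gamma=|\alpha-\gamma|$. Hence $\widetilde{A}=\widehat{B}$ if and only if $\alpha=\pi/2$.

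The comparison $\widetilde{A}=\widehat{C}$ is symmetric: the first coordinate forces $\alpha=\pi/2$, and the third coordinate requires $\gamma=|\alpha-\beta|$, which again holds. The remaining four pairs, namely $\widetilde{B}$ against $\widehat{A},\widehat{C}$ and $\widetilde{C}$ against $\widehat{A},\widehat{B}$, follow by cyclically permuting the vertices. In each, a coordinate with a minus sign forces the corresponding angle to be $\pi/2$, and the equation on the other coordinate then holds automatically.

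Putting these together, the two sets intersect exactly when one of the angles is right. When $\alpha=\pi/2$, the computation above shows $\widehat{B}=\widehat{C}=\widetilde{A}=(0,\cos\beta,\cos\gamma)$. This also matches the earlier remark that $\widetilde{A}\in E_A$ exactly when $\alpha=\pi/2$.

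There is no real obstacle. The only point needing care is the choice of the absolute value $|\cdot|$ in \eqref{eq.spec.points.2}, which places the argument in $[0,\pi)$ so that cosine injectivity applies; the identity $-\cos\alpha=\cos(\beta+\gamma)$ is what makes the same-letter case collapse.
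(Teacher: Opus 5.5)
Your proof is correct and follows the same route as the paper. Both compare coordinates in the explicit formulas: a minus-signed coordinate forces the corresponding angle to be $\pi/2$, and the remaining coordinate then matches automatically. The paper phrases the converse as $\widehat{B}=\widetilde{A}$ implying $\widetilde{A}\in E_A$, hence $\cos\angle BAC=-\cos\angle BAC$. Your explicit check that $\widetilde{A}\neq\widehat{A}$ goes beyond what the paper writes, although the definition $\widehat{A}\in E_B\cap E_C\setminus\{\widetilde{A}\}$ already guarantees it.
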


\begin{proof} If, say, $\angle BAC =\pi/2$ and $\angle ABC =\psi$, then $\widetilde{A}=(0, \cos \psi, \sin \psi)$,
$\widehat{A}=(\sin 2\psi, \cos \psi, \sin \psi)$, $\widetilde{B}=(0, -\cos \psi, \sin \psi)$,
$\widetilde{C}=(0, \cos \psi, -\sin \psi)$, and $\widehat{B}=\widehat{C}=(0, \cos \psi, \sin \psi)=\widetilde{A}$.
On the other hand, if, say, $\widehat{B}=\widetilde{A}$, then $\widetilde{A} \in E_A$ and $\cos \angle BAC=-\cos \angle BAC$, i.e. $\angle BAC= \pi/2$.
\end{proof}

\begin{lemma}\label{le.numsorel1}
If $\angle BAC =\pi/2$, then for any $(a_1,a_2,a_3) \in E_A\setminus \{\widetilde{A}, \widetilde{B}, \widetilde{C}\}$,
the Grunert system \eqref{eq_Grunert_1} has no solution.
\end{lemma}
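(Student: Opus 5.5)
The plan is a direct geometric argument via Thales' theorem, with no need for the quartic $\widetilde{P}$ or Proposition~\ref{pr.numsol1}.

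\textbf{Step 1: any solution lies in the plane.} Let $(a_1,a_2,a_3)\in E_A\setminus\{\widetilde{A},\widetilde{B},\widetilde{C}\}$. Since $E_A\subset\mathbb{BP}$, the volume formula \eqref{eq.vol.1} gives $V=0$ for any point $D$ realising $(a_1,a_2,a_3)$ via \eqref{eq.numb.1}. Hence such a $D$ must lie in $\Pi^0\setminus\{A,B,C\}$. It therefore suffices to show that no $D\in\Pi^0\setminus\{A,B,C\}$ has $F(D)=(a_1,a_2,a_3)$.

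\textbf{Step 2: the angle condition.} By definition, $E_A$ lies in the plane $x_1=\cos\angle BAC$. The hypothesis $\angle BAC=\pi/2$ gives $a_1=0$, i.e.\ $\cos\angle BDC=0$, so $\angle BDC=\pi/2$.

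\textbf{Step 3: Thales.} By Thales' theorem, a point $D$ of the plane with $D\neq B,C$ satisfies $\angle BDC=\pi/2$ exactly when $D$ lies on the circle with diameter $BC$. Because $\angle BAC=\pi/2$, the segment $BC$ is a diameter of the circumcircle, so this circle is $\mathbb{S}$. Thus $D\in\mathbb{S}\setminus\{A,B,C\}$.

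\textbf{Step 4: contradiction.} By the observation made in the introduction, every $D\in\mathbb{S}\setminus\{A,B,C\}$ lies on one of the three arcs cut out by $A,B,C$. Consequently $F(D)\in\{\widetilde{A},\widetilde{B},\widetilde{C}\}$. This contradicts the choice of $(a_1,a_2,a_3)$, so the Grunert system \eqref{eq_Grunert_1} has no solution at such points.

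\textbf{Main point to check.} The only delicate issue is the arc observation itself, in particular that $D$ on the arc $BC$ not containing $A$ gives exactly $\widetilde{A}$. This follows from inscribed angles: $\angle BDC=\pi-\angle BAC$, $\angle ADC=\angle ABC$ and $\angle ADB=\angle ACB$. The remaining two arcs are handled in the same way. Since the introduction states this observation, I would simply cite it.
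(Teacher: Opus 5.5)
Your proof is correct and follows the paper's argument. The paper's one-line proof likewise says that any solution must lie on $\mathbb{S}\setminus\{A,B,C\}$ (implicitly because $a_1=\cos\angle BAC=0$ forces $\angle BDC=\pi/2$, i.e.\ Thales) and that every such point maps to $\widetilde{A}$, $\widetilde{B}$, or $\widetilde{C}$; you have simply written out the planarity, Thales, and inscribed-angle steps that the paper leaves to the reader.
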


\begin{proof}
Any such solutions should be from $\mathbb{S}\setminus \{A,B,C\}$, but this is impossible (the images of such points are one of the points
$\widetilde{A}$, $\widetilde{B}$, $\widetilde{C}$).
\end{proof}
\medskip

Now, {\it we consider the case $\angle BAC \neq \pi/2$}. We assume that $A,B,C \in \mathbb{S}$. Let $\gamma_1$ be an open arc of $\mathbb{S}$ with the endpoints
$B$ and $C$, that contains the point $A$. Consider $\gamma_2$, the image of $\gamma_1$ under the reflection with respect to the line $BC$.
Then $\gamma_1 \bigcup \gamma_2$ is the set of points $D \in \mathbb{R}^2\setminus \{B,C\}$ such that $\angle BDC = \angle BAC$.
The image of $\gamma_1 \setminus \{A\}$ under the map $h$ (see \eqref{eq.func_h}) is a set $\{\widetilde{B}, \widetilde{C}\}$.
Let us consider the curve $h(\gamma_2) \subset E_A$. The following proposition is almost obvious.

\begin{prop}\label{pr.numsorel2}
If, in the above conditions,  $D \in \gamma_2$ tend to $B$ {\rm(}respectively, to $C${\rm)}, then $h(D)$ tend to $\widehat{C}$ (respectively, to $\widehat{B}$).
The restriction of $h$ to $\gamma_2$ is injective. The image $h(\gamma_2)$ does not contain the points $\widetilde{B}$ and $\widetilde{C}$.
\end{prop}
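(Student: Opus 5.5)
Let $\alpha=\angle BAC\neq\pi/2$, $\beta=\angle ABC$, $\gamma=\angle ACB$. I will prove Proposition~\ref{pr.numsorel2} by writing $h(D)$ explicitly in terms of one angular parameter along $\gamma_2$. For $D\in\gamma_2$ the angle $\angle BDC=\alpha$ is constant, so $h(D)\in E_A$. Set $\varphi=\angle DBC\in(0,\pi-\alpha)$. Then $\angle DCB=\pi-\alpha-\varphi$, and $\varphi$ determines $D$ uniquely, because $D$ lies on the fixed side of $BC$ opposite to $A$. Reflection in $BC$ sends $\gamma_2$ onto $\gamma_1$ and $D$ to a point $D'\in\gamma_1$ with $\angle D'BC=\varphi$. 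The map $D'\mapsto\angle D'BC$ is a monotone parametrization of the arc $\gamma_1$ from $C$ ($\varphi\to 0$) to $B$ ($\varphi\to\pi-\alpha$). In particular $D\to C$ corresponds to $\varphi\to0$ and $D\to B$ to $\varphi\to\pi-\alpha$.

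Next I compute the two remaining angles at $D$. Since $A$ and $D$ lie on opposite sides of $BC$, the angle $\angle ADB$ is governed by the angles at $D$ of the triangles $ABD$ and $DBC$. The plan is to express $\angle ABD=\beta+\varphi$ (or its reflex complement $2\pi-\beta-\varphi$ when this exceeds $\pi$, so that only the cosine matters). Then, in triangle $ABD$, using $d(B,D)=d_1\sin(\pi-\alpha-\varphi)/\sin\alpha$ from the law of sines in $DBC$ and $d(A,B)=d_3$, I obtain $\cos\angle ADB$ as an explicit function of $\varphi$. The same is done for $\cos\angle ADC$. A cleaner route, which I would try first, is to use the identity on $\mathbb{BP}$ with $a_1=\cos\alpha$ fixed: points of $E_A$ are $(\cos\alpha,\cos\theta,\cos(\theta\pm\alpha))$. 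One shows that $\angle ADC$ and $\angle ADB$ differ by exactly $\alpha$ as $D$ moves on $\gamma_2$ (with $DC$ or $DB$ inside the other angle), so that $h(D)=(\cos\alpha,\cos\theta(\varphi),\cos(\theta(\varphi)+\alpha))$ with a single continuous angle function $\theta$.

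Limits: as $D\to C$ along $\gamma_2$, the direction $DC$ tends to the tangent of $\gamma_2$ at $C$. By the tangent–chord angle, this tangent makes with $CB$ an angle equal to the inscribed angle $\alpha$ on the reflected side, and $\angle ADC\to$ the angle between $CA$ and this tangent, giving $|\alpha-\gamma|$. Meanwhile $\angle ADB\to\angle ACB=\gamma$. Hence $h(D)\to(\cos\alpha,\cos|\alpha-\gamma|,\cos\gamma)=\widehat B$ by \eqref{eq.spec.points.2}. Symmetrically $D\to B$ gives $\widehat C$.

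For injectivity I will show that $\theta(\varphi)$ (say $\angle ADC$) is strictly monotone on $\gamma_2$. The argument is that the locus of points seeing $AC$ under a fixed angle is a pair of circular arcs through $A,C$. Such an arc meets $\gamma_2$ (a circle through $C$) in at most one further point besides $C$, which gives monotonicity except where the circles are tangent. Together with the parametrization $(\cos\alpha,\cos\theta,\cos(\theta+\alpha))$ of $E_A$, which is injective for $\theta$ in a range of length less than $2\pi$, this yields injectivity of $h|_{\gamma_2}$. Alternatively, Lemma~\ref{le.degen1} gives local injectivity off $\mathbb{S}$ ($\gamma_2\cap\mathbb{S}=\emptyset$ since $\alpha\ne\pi/2$), hence local monotonicity of the curve, and the distinct endpoint limits prevent folding.

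Finally, $\widetilde B,\widetilde C\notin h(\gamma_2)$. The equality $h(D)=\widetilde B$ would force $\angle ADC=\pi-\angle ADB\ldots$; more simply, by the injectivity just proven and the fact that $h(\gamma_1\setminus\{A\})=\{\widetilde B,\widetilde C\}$, one can argue via angles: $\widetilde B$ has second coordinate $-\cos\beta$, which would require $\angle ADC=\pi-\beta$. Combined with $\angle BDC=\alpha$ and the position of $D$, this forces $D$ onto $\mathbb{S}$, contradicting $\gamma_2\cap\mathbb{S}=\emptyset$.

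The main obstacle is the monotonicity/injectivity step, specifically handling the sign bookkeeping of which ray lies inside which angle as $D$ traverses $\gamma_2$ for obtuse versus acute $\alpha$.
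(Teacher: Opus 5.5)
Your limit computation via the tangent--chord angle is correct, and more explicit than the paper, which calls this part almost obvious. The other two parts have genuine gaps.

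\textbf{Injectivity.} Arguments based on the unsigned angle $\angle ADC$ do not work.
\begin{itemize}
\item The claim that $\angle ADC$ and $\angle ADB$ always differ by $\alpha$ is false. For an equilateral base, let $D$ be the point of $\gamma_2$ farthest from $BC$. Then the ray $DA$ lies inside $\angle BDC$, and $\angle ADB=\angle ADC=\alpha/2$.
\item $\angle ADC$ need not be monotone on $\gamma_2$. The points seeing $AC$ under a given unsigned angle lie on two circles (mirror images in $AC$), so $\gamma_2$ can meet this set twice. For example, if $\angle ACB>\angle BAC$, the line $AC$ meets $\gamma_2$ again beyond $C$, at a point where $\angle ADC=0$, which is an interior minimum.
\item Your fallback through Lemma~\ref{le.degen1} is the paper's own argument. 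It only gives local injectivity, i.e.\ a strictly monotone lift of $h|_{\gamma_2}$ to the universal cover of $E_A$. Distinct endpoint limits do not rule out the image winding around $E_A$.
\end{itemize}
The fix is to use the oriented angle $\omega(D)=\arg\frac{C-D}{A-D}\in\mathbb{R}/2\pi\mathbb{Z}$. The oriented angle from $DB$ to $DC$ is a constant $\pm\alpha$ on $\gamma_2$. Hence $h(D)=\Phi(\pm\omega(D))$ with a fixed sign, where $\Phi(t)=(\cos\alpha,\cos t,\cos(t+\alpha))$ is injective on $\mathbb{R}/2\pi\mathbb{Z}$ because $\alpha\not\equiv 0\pmod{\pi}$. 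Each level set of $\omega$ lies on a single circle or line through $A$ and $C$. Such a circle or line meets the circle of $\gamma_2$ in at most one point other than $C$, because $A$ is not on that circle when $\alpha\neq\pi/2$. This proves injectivity outright.

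\textbf{The points $\widetilde B,\widetilde C$.} Your step ``this forces $D$ onto $\mathbb{S}$'' cannot be completed, because the assertion is false when $\angle ABC$ or $\angle ACB$ is obtuse. Take $A=(-1,1)$, $B=(0,0)$, $C=(1,0)$, so $\angle ABC=3\pi/4$, and let $H=(-1,-2)$ be the orthocenter.
\begin{itemize}
\item $\cos\angle BHC=3/\sqrt{10}=\cos\angle BAC$, and $H$ lies on the other side of $BC$ from $A$, so $H\in\gamma_2$.
\item $\cos\angle AHC=1/\sqrt{2}=-\cos\angle ABC$ and $\cos\angle AHB=2/\sqrt{5}=\cos\angle ACB$, so $h(H)=\widetilde B$.
\end{itemize}
The paper's justification, $h^{-1}(\widetilde B)\cup h^{-1}(\widetilde C)=\gamma_1\setminus\{A\}$, fails at the same place.

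The oriented set-up gives the correct version. Suppose $h(D)=\widetilde B=h(D')$ with $D\in\gamma_2$ and $D'$ on the arc $CA$ of $\mathbb{S}$ not containing $B$. The sign in $h=\Phi(\pm\omega)$ is opposite on $\gamma_1$ and $\gamma_2$, so $\omega(D)=-\omega(D')$. Hence $D$ lies on the reflection of $\mathbb{S}$ in $AC$, which meets the circle of $\gamma_2$ only at $C$ and $H$. Moreover, $H\in\gamma_2$ exactly when $\angle ABC$ or $\angle ACB$ is obtuse. So the third assertion holds if and only if $\angle ABC,\angle ACB\le\pi/2$ (for instance, when $\angle BAC$ is the largest angle), and your proof needs that hypothesis.
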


\begin{proof}
The first statement is almost obvious and can be checked without any problem. Now, since $\angle BAC \neq \pi/2$, the curve $\gamma_2$
has empty intersection with $\mathbb{S}$. Therefore, in a neighborhood of any point $D \in \gamma_2$ the map $h$ is injective
(it follows from the fact that $h$ at the point $D$ is non-degenerate by Lemma \ref{le.degen1}). It implies that restriction of $h$ to $\gamma_2$ is injective.
The last statement follows from the fact that $h^{-1}\bigl(\widetilde{B}\bigr)\bigcup h^{-1}\bigl(\widetilde{C}\bigr)=\gamma_1 \setminus \{A\}$.
\end{proof}

Let $\theta_A$ is an open arc of the ellipse $E_A$ between the points $\widehat{B}$ and $\widehat{C}$, that does not contain
the points $\widetilde{B}$ and $\widetilde{C}$. The above proposition implies the following corollary.

\begin{corollary}\label{co.numsorel3}
If $\angle BAC \neq \pi/2$, then the Grunert system \eqref{eq_Grunert_1} has exactly one solution for any $(a_1,a_2,a_3) \in \theta_A$
and has no solution for all
$(a_1,a_2,a_3) \in E_A\setminus \left( \theta_A \bigcup \{\widetilde{B}, \widetilde{C}\}\right)$.
\end{corollary}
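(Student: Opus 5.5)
The plan is to identify exactly which planar points $D$ can map into $E_A$, and then read off the image using Proposition~\ref{pr.numsorel2}. Take any $(a_1,a_2,a_3)\in E_A$. Any solution of the Grunert system \eqref{eq_Grunert_1} for a point of $\mathbb{BP}$ lies in $\Pi^0\setminus\{A,B,C\}$, so it is a planar point $D$ with $\cos\angle BDC=a_1=\cos\angle BAC$. Since the angles lie in $(0,\pi)$, this forces $\angle BDC=\angle BAC$. Hence $D\in\gamma_1\cup\gamma_2$, and in particular $D\notin\{B,C\}$.

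\textbf{Points on $\gamma_1$.} If $D\in\gamma_1\setminus\{A\}$, then $h(D)\in\{\widetilde{B},\widetilde{C}\}$. If $D=A$, then $D\notin\GT$, so it is excluded. Consequently, every solution for a point of $E_A\setminus\{\widetilde{B},\widetilde{C}\}$ must lie on $\gamma_2$. Moreover, the number of solutions equals the number of preimages under $h|_{\gamma_2}$, which is injective by Proposition~\ref{pr.numsorel2}. So it remains to show that $h(\gamma_2)=\theta_A$.

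\textbf{The image $h(\gamma_2)$.} The arc $\gamma_2$ is an open arc with endpoints $B$ and $C$, and $h$ is continuous on it. Hence $h(\gamma_2)$ is a connected subset of $E_A$, and it is homeomorphic to an open interval because $h|_{\gamma_2}$ is injective. By Proposition~\ref{pr.numsorel2}, its endpoints are the limits $\widehat{C}$ (as $D\to B$) and $\widehat{B}$ (as $D\to C$), and it avoids $\widetilde{B}$ and $\widetilde{C}$. On the ellipse $E_A$, the points $\widehat{B}$ and $\widehat{C}$ cut out two open arcs. One of them is $\theta_A$. The other contains $\widetilde{B}$ and $\widetilde{C}$, and these are distinct from $\widehat{B},\widehat{C}$ by Lemma~\ref{le.numsorel0}, since $\angle BAC\neq\pi/2$.

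An injective continuous image of an open interval in a circle, whose closure adds exactly the endpoints $\widehat{B}$ and $\widehat{C}$, must be one of these two open arcs. It cannot be the arc containing $\widetilde{B}$ and $\widetilde{C}$. Hence $h(\gamma_2)=\theta_A$.

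The delicate point is the degenerate case $\widehat{B}=\widehat{C}$. Then the image might a priori wrap around the whole ellipse minus a point. This is excluded again because the image misses $\widetilde{B}$ and $\widetilde{C}$. In that case $\theta_A$ is interpreted as the complement of the point together with the arc containing $\widetilde{B}$ and $\widetilde{C}$. I would check via \eqref{eq.spec.points.2} that $\widehat{B}\neq\widehat{C}$ unless $\angle ABC=\angle ACB$ together with a further coincidence, so that this case essentially does not occur. This endpoint and limit analysis is the main obstacle.

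\textbf{Conclusion.} Each point of $\theta_A$ has exactly one solution, by the injectivity of $h|_{\gamma_2}$. Points of $E_A\setminus(\theta_A\cup\{\widetilde{B},\widetilde{C}\})$ have no preimage in $\gamma_2$, and none in $\gamma_1$ either, so they have no solution.
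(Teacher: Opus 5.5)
This is correct and follows the paper's own route: the paper obtains the corollary directly from Proposition~\ref{pr.numsorel2} together with $h(\gamma_1\setminus\{A\})=\{\widetilde{B},\widetilde{C}\}$, and you have written out the arc argument it leaves implicit. One citation is off, though harmlessly. Lemma~\ref{le.numsorel0} separates $\{\widetilde{B},\widetilde{C}\}$ from $\{\widehat{B},\widehat{C}\}$ only for non-right triangles: if $\angle ABC=\pi/2$ then $\widehat{C}=\widetilde{B}$, and if $\angle ACB=\pi/2$ then $\widehat{B}=\widetilde{C}$. At most one such coincidence can occur, so the remaining point still lies strictly inside the complementary arc and rules it out. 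Also, the degenerate case $\widehat{B}=\widehat{C}$ you worry about occurs only when $\angle BAC=\pi/2$, so it is excluded by hypothesis.
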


Lemma \ref{le.numsorel1} and Corollary \ref{co.numsorel3} obviously imply the following result.

\begin{theorem}\label{te.ellipces}
If $\angle BAC =\pi/2$, then for any $(a_1,a_2,a_3) \in E_A\setminus \{\widetilde{A}, \widetilde{B}, \widetilde{C}\}$,
the Grunert system \eqref{eq_Grunert_1} has no solution.
If $\angle BAC \neq \pi/2$, then the Grunert system \eqref{eq_Grunert_1} has exactly one solution for any $(a_1,a_2,a_3) \in \theta_A$
and has no solution for all
$(a_1,a_2,a_3) \in E_A\setminus \left( \theta_A \bigcup \{\widetilde{B}, \widetilde{C}\}\right)$,
where $\theta_A$ is an open arc of the ellipse $E_A$ between the points $\widehat{B}$ and $\widehat{C}$, that does not contain
the points $\widetilde{B}$ and $\widetilde{C}$.

Similar results are true for the angle $\angle ABC$ and the ellipse $E_B$, as well as for the angle $\angle ACB$ and the ellipse $E_C$.
\end{theorem}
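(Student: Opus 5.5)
The proof assembles Lemma~\ref{le.numsorel1} and Corollary~\ref{co.numsorel3}. The main content sits in the corollary, so I will explain how it follows from Proposition~\ref{pr.numsorel2}.

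\textbf{Step 1 (reduction to the plane and to $\gamma_1\cup\gamma_2$).} Any point $(a_1,a_2,a_3)\in E_A$ lies on $\mathbb{BP}$. By formula~\eqref{eq.vol.1}, every solution $D$ of the Grunert system therefore lies in $\Pi^0\setminus\{A,B,C\}$, so the solutions are exactly the points $D$ with $h(D)=(a_1,a_2,a_3)$. The condition $a_1=\cos\angle BAC$ means $\angle BDC=\angle BAC$. Hence $D\in\gamma_1\cup\gamma_2$, and $D\neq A$.

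\textbf{Step 2 (the case $\angle BAC=\pi/2$).} Here $\gamma_2$ is the reflection of the semicircle $\gamma_1$ in the diameter $BC$, so $\gamma_1\cup\gamma_2\subset\mathbb{S}$. Every point of $\mathbb{S}\setminus\{A,B,C\}$ is mapped by $h$ to one of $\widetilde{A},\widetilde{B},\widetilde{C}$. This gives no solutions off these three points, which is Lemma~\ref{le.numsorel1}.

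\textbf{Step 3 (the case $\angle BAC\neq\pi/2$).} The points of $\gamma_1\setminus\{A\}$ are mapped onto $\{\widetilde{B},\widetilde{C}\}$, so only $\gamma_2$ contributes. By Proposition~\ref{pr.numsorel2}, $h|_{\gamma_2}$ is continuous and injective on the open arc $\gamma_2$. Its image is a connected subset of $E_A$ avoiding $\widetilde{B},\widetilde{C}$, and its endpoints are the limits $\widehat{C}$ (as $D\to B$) and $\widehat{B}$ (as $D\to C$).

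An injective continuous image of an open interval into the ellipse $E_A$, with distinct limit endpoints, is an open arc between those endpoints. This needs $\widehat{B}\neq\widehat{C}$, which holds since $\angle ABC\neq\angle ACB$ whenever the third coordinates differ; the degenerate isosceles subcase has to be handled separately by the same argument. Of the two open arcs joining $\widehat{B}$ and $\widehat{C}$, the image must be the one missing $\widetilde{B}$ and $\widetilde{C}$, namely $\theta_A$. Injectivity gives exactly one solution on $\theta_A$ and none on $E_A\setminus(\theta_A\cup\{\widetilde{B},\widetilde{C}\})$.

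This requires $\widetilde{B}$ and $\widetilde{C}$ to lie on the complementary arc, not both on the endpoints. By Lemma~\ref{le.numsorel0}, for a non-right triangle the points $\widehat{B},\widehat{C}$ are distinct from $\widetilde{B},\widetilde{C}$, and then $\theta_A$ is well defined.

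\textbf{Step 4.} The statements for $E_B$ and $E_C$ follow by relabeling the vertices.

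\textbf{Main obstacle.} I expect the hardest step to be the topological identification of $h(\gamma_2)$ with the whole arc $\theta_A$, which needs both endpoint limits. If this causes trouble, I would instead parametrize $E_A$ explicitly and verify surjectivity onto $\theta_A$ by the intermediate value theorem along the arc.
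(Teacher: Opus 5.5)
You follow the paper's route exactly: Lemma~\ref{le.numsorel1} handles $\angle BAC=\pi/2$, and Proposition~\ref{pr.numsorel2} gives Corollary~\ref{co.numsorel3} otherwise. Your claim that a continuous injective image of an open interval in $E_A$ with distinct end-limits is exactly the open arc between them is correct, and it fills in detail the paper omits. The endpoint bookkeeping, however, has two defects.

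First, your reason for $\widehat{B}\neq\widehat{C}$ is not a valid argument, and the ``isosceles subcase'' you defer does not exist. By \eqref{eq.spec.points.2}, $\widehat{B}=\widehat{C}$ would require both $\angle ABC=|\angle BAC-\angle ACB|$ and $\angle ACB=|\angle BAC-\angle ABC|$. Each equality says that some angle equals the sum of the other two, hence equals $\pi/2$. The first forces $\angle BAC$ or $\angle ACB$ to be right; the second forces $\angle BAC$ or $\angle ABC$ to be right. With $\angle BAC\neq\pi/2$ you would need $\angle ABC=\angle ACB=\pi/2$, which is impossible. So $\widehat{B}\neq\widehat{C}$ for every base with $\angle BAC\neq\pi/2$, isosceles or not.

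Second, the hypothesis $\angle BAC\neq\pi/2$ does not make the triangle non-right. Citing Lemma~\ref{le.numsorel0} ``for a non-right triangle'' therefore leaves out bases with a right angle at $B$ or at $C$. In those cases the relabelled lemma gives $\widehat{C}=\widetilde{B}$ (respectively $\widehat{B}=\widetilde{C}$), so an endpoint of $\theta_A$ really does coincide with an excluded point. The conclusion survives, but it must be argued.

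For $\angle ABC=\pi/2$ you have $\widehat{C}=\widetilde{B}=(\cos\angle BAC,0,\cos\angle ACB)$. Meanwhile $\widetilde{C}=(\cos\angle BAC,0,-\cos\angle ACB)$ differs from both endpoints, since each endpoint has third coordinate $+\cos\angle ACB$. Hence $\widetilde{C}$ lies on exactly one of the two open arcs, $\theta_A$ is the other one, and your injectivity argument goes through unchanged.

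In general, your selection step needs only your own condition that $\widetilde{B},\widetilde{C}$ are not both endpoints. Since $\widetilde{B}\neq\widehat{B}$ and $\widetilde{C}\neq\widehat{C}$ always, this could fail only with right angles at both $B$ and $C$, which cannot happen.
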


\section{Computing the number of solutions on open regions of $\mathbb{BP}$}

If $\mathbb{BP}(i,j,k)$ is connected, then we consider $N(i,j,k)$, the number of solutions of the equation $h(x,y)=b$, where $b \in \mathbb{BP}(i,j,k)$.
To find this number, it suffices to find all solutions of this equation for any $b \in \mathbb{BP}(i,j,k)$. Proposition \ref{pr.spp_sol_1} implies that $N(i,j,k) \leq 2$
for all indices.
It should be noted that this value can depend significantly on whether the base is acute-angled or not (that is, it is rectangular or obtuse-angled).

\begin{remark}\label{rem.figures}
Undoubtedly, visualization of the corresponding results for the cases of acute-angled, rectangular and obtuse-angled bases will help to better understand them.
In Figures
\ref{pillowcase_acute}, \ref{pillowcase_right}, \ref{pillowcase_obtuse} (see below), the points $b \in \mathbb{BP}$ for which the equation $h(x,y)=b$ has two, one, or no solutions are shown in blue, brown, or green, respectively.
\end{remark}

If $\mathbb{BP}(i,j,k)$ is not connected (hence it has two connected component by Lemma~\ref{le.noncon1}),
then we consider $N(i,j,k)'$ and $N(i,j,k)''$, the number of solutions of the equation $h(x,y)=b$,
where $b \in \mathbb{BP}(i,j,k)'$ and $b \in \mathbb{BP}(i,j,k)''$, respectively.
\smallskip

It is easy to see that $\mathbb{BP}(+,-,-)$, $\mathbb{BP}(-,+,-)$, and $\mathbb{BP}(-,-,+)$ are connected for any non-degenerate base $ABC$.

\begin{lemma}\label{le.one.horn}
We have $N(+,-,-)=N(-,+,-)=N(-,-,+)=0$ for any non-degenerate base $ABC$.
\end{lemma}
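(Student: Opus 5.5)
By Corollary~\ref{co.numsol1}, the number of solutions of $h(x,y)=b$ is constant on each connected component of $\mathbb{BP}(i,j,k)$. Since $\mathbb{BP}(+,-,-)$ is connected, it is enough to exhibit one point $b\in\mathbb{BP}(+,-,-)$ with no planar solution $D$. The other two sets are handled identically after permuting $A,B,C$. The natural choice of $b$ is a point near the vertex $(1,-1,-1)$, which lies in the closure of $\mathbb{BP}(+,-,-)$. I would argue by contradiction via a geometric angle argument, without computing anything.

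The key observation is what $b=F(D)$ close to $(1,-1,-1)$ would mean for $D$. If $a_1=\cos\angle BDC\approx 1$ and $a_2,a_3\approx -1$, then $\angle BDC\approx 0$ and $\angle ADC,\angle ADB\approx\pi$. Hence $A$ lies almost opposite to $B$ and to $C$ as seen from $D$, i.e.\ $D$ lies nearly on segment $AB$ and nearly on segment $AC$. More precisely, $\angle ADB>\pi-\delta$ forces $D$ into a thin lens around segment $AB$ (the region bounded by two circular arcs through $A,B$ with inscribed angle $\pi-\delta$). Likewise $\angle ADC>\pi-\delta$ forces $D$ into a thin lens around $AC$.

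Since the triangle is non-degenerate, for small $\delta$ the two lenses meet only in a small neighbourhood of $A$. So $D\to A$ as $b\to(1,-1,-1)$ along solutions. By Proposition~\ref{pr.limitpoint.1}, the limit points of $F(D)$ as $D\to A$ lie in $\operatorname{Lim}(A)$, hence on the plane $x_1=\cos\angle BAC<1$. That contradicts $a_1\to1$. In fact there is a simpler direct version: if $D$ is near $A$, then $\angle BDC$ is near $\angle BAC$ in the limiting sense, bounded away from $0$ for $D$ in the intersection of the two lenses. There, $D$ lies within angle $\angle BAC$ up to small errors, so $\angle BDC\ge$ roughly $\angle BAC>0$ and $a_1$ stays away from $1$.

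The resulting argument runs as follows. Choose $\delta>0$ small and a point $b=(a_1,a_2,a_3)\in\mathbb{BP}$ with $a_1>\max(\cos\frac{\angle BAC}{2},x_0)$ and $a_2,a_3<\min(-\cos\delta,y_0,z_0)$. Such $b$ exist on $\mathbb{BP}$ near $(1,-1,-1)$, for example along the edge-adjacent region, using the parametrization from Proposition~\ref{pr.spp_sol_1}. This $b$ lies in $\mathbb{BP}(+,-,-)$. Then I show no planar $D$ realizes $b$ by the lens argument.

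The main obstacle is making the lens-intersection step quantitative. One must show that for $D$ with $\angle ADB,\angle ADC>\pi-\delta$, the angle $\angle BDC$ is at least some fixed positive amount, say $\angle BAC/2$. I would try this using planar angle addition: in the plane, $\angle BDC$ equals either $|\angle ADB-\angle ADC|$ or $2\pi-\angle ADB-\angle ADC$. If the rays $DB$ and $DC$ are on the same side of line $DA$, the first case gives $\angle BDC<\delta$, so $D$ sees $B,C$ nearly collinearly on the ray opposite $A$. Combined with $D$ being near segment $AB$ and segment $AC$ simultaneously, this forces $D$ near $A$, where the directions to $B$ and $C$ differ by about $\angle BAC$, a contradiction. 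If they are on opposite sides, $\angle BDC<2\delta$ as well, and the same contradiction applies. Since $a_1$ is close to $1$ exactly when $\angle BDC$ is small, the lower bound on $\angle BDC$ gives the contradiction, so $N(+,-,-)=0$.
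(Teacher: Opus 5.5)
Your route is the paper's: use Corollary~\ref{co.numsol1} to reduce to a single point $b$ near the corner $(1,-1,-1)$, and show that $\cos\angle ADC=b_2\approx-1$ and $\cos\angle ADB=b_3\approx-1$ cannot hold together. The paper phrases this as $T_{A,C}^{b_2}\cap T_{A,B}^{b_3}=\emptyset$. The step you call the main obstacle takes one line, and $a_1$ plays no role. If $\angle ADB>\pi-\delta$ and $\angle ADC>\pi-\delta$, the angle sums in triangles $ADB$ and $ADC$ give $\angle DAB<\delta$ and $\angle DAC<\delta$. Hence $\angle BAC\le\angle BAD+\angle DAC<2\delta$, which is impossible once $2\delta\le\angle BAC$. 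So the two lenses meet only at $A$, and you need neither the case analysis on the rays $DB$, $DC$ nor Proposition~\ref{pr.limitpoint.1}.

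The genuine problem is the sentence ``Since $\mathbb{BP}(+,-,-)$ is connected''. The paper asserts the same thing just before the lemma, and its proof uses it in the same way. This fails when $\angle BAC>\pi/2$: by Lemma~\ref{le.obtusen} the set then has two components, and the count is not zero on the one nearer to $(x_0,y_0,z_0)$. For example, take the base with $\angle BAC=2\pi/3$, $d_1=\sqrt3$, $d_2=d_3=1$. The point $(0,\sqrt2/2,\sqrt2/2)$ lies in $\mathbb{BP}(+,-,-)$, since $0>-1/2$ and $\sqrt2/2<\sqrt3/2$. It is realized by the point $D$ on the symmetry axis, farther from $BC$ than $A$, with $DA=(\sqrt3-1)/2$ and $DB=DC=\sqrt6/2$; indeed $\angle BDC=\pi/2$ and $\angle ADB=\angle ADC=\pi/4$. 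Compare Theorem~\ref{te.decom_obtuse}, which gives two solutions on $\mathbb{BP}(+,-,-)'$. So the lemma, read literally, is false for bases obtuse at $A$.

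What your argument (and the paper's) actually proves is this: there are no solutions on the connected component of $\mathbb{BP}(+,-,-)$ whose closure contains $(1,-1,-1)$. That component is all of $\mathbb{BP}(+,-,-)$ whenever that set is connected, and it is $\mathbb{BP}(+,-,-)''$ when $\angle BAC>\pi/2$. This corrected form is exactly how the lemma is used in the obtuse case. You should state it that way, with the analogous statements for $\mathbb{BP}(-,+,-)$ and $\mathbb{BP}(-,-,+)$ at the vertices $B$ and $C$. Connectedness in the cases where you need it must be justified separately, for instance via Lemma~\ref{le.acute} for acute bases.
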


\begin{proof}
Let us prove $N(+,-,-)=0$, the proof of the other two equalities is similar.
If $N(+,-,-)>0$, then for any point  $b=(b_1,b_2,b_3)\in \mathbb{BP}(+,-,-)$, there is a solution of the equation $h(x,y)=b$.
If we take $b_2$ and $b_3$ sufficiently close to $-1$, then the intersection of the set
$T_{A,C}^{b_2}:=\left\{D\in \Pi^0 \setminus \{A,B,C\}\,|\, \cos \angle ADC =b_2\right\}$
and the set
$T_{A,B}^{b_3}:=\left\{D\in \Pi^0 \setminus \{A,B,C\}\,|\, \cos \angle ADB =b_3\right\}$
will be empty. Therefore, we cannot find a suitable $D=(x,y)$. This contradiction proves the lemma.
\end{proof}
\smallskip

The following result follows from the fact that the point $(x_0,y_0,z_0)$ is in the interior of $\mathbb{P}$ for any acute-angled base.

\begin{lemma}\label{le.acute}
If the base $ABC$ is acute-angled, then all sets $\mathbb{BP}(i,j,k)$ are non-empty and connected.
\end{lemma}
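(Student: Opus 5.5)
The argument is a radial projection from the point $P_0:=(x_0,y_0,z_0)$. The key input is the remark before the lemma: for an acute-angled base all of $x_0,y_0,z_0$ are positive, so
$$1+2x_0y_0z_0-x_0^2-y_0^2-z_0^2=4x_0y_0z_0>0,$$
and $P_0$ lies in the interior of the convex body $\mathbb{P}$.

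\textbf{Step 1 (radial projection is a homeomorphism).} Since $\mathbb{P}$ is compact, convex and has $P_0$ as an interior point, every ray $l$ issuing from $P_0$ meets $\mathbb{BP}=\partial\mathbb{P}$ in exactly one point $\rho(l)$. The map $l\mapsto\rho(l)$ is a homeomorphism from the unit sphere of directions $S^2$ onto $\mathbb{BP}$. This is the standard fact that the gauge (Minkowski) function of a convex body about an interior point is continuous and positive.

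\textbf{Step 2 (pulling back the octants).} A point of $\mathbb{BP}$ lies in $\Omega(i,j,k)$ exactly when its direction vector from $P_0$ has coordinate signs $(i,j,k)$. Hence $\mathbb{BP}\cap\Omega(i,j,k)$ is the image under $\rho$ of the open spherical triangle
$$\{w\in S^2 \mid \sgn w_1=i,\ \sgn w_2=j,\ \sgn w_3=k\}.$$
Each such open spherical triangle is nonempty and connected (it is path-connected), so its image under the homeomorphism $\rho$ is nonempty and connected as well.

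\textbf{Step 3 (removing the four vertices).} We still have to pass from $\mathbb{BP}\cap\Omega(i,j,k)$ to $\mathbb{BP}(i,j,k)=M\cap\Omega(i,j,k)$, where $M$ omits the four vertices of $TP$. Each octant contains at most one of these vertices; for example $(1,1,1)$ lies only in $\Omega(+,+,+)$, because $x_0,y_0,z_0<1$. Removing a single point from an open connected subset of a topological $2$-manifold (here the image of an open spherical triangle) leaves it nonempty and connected. Hence every $\mathbb{BP}(i,j,k)$ is nonempty and connected.

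This also agrees with the proof of Lemma~\ref{le.noncon1}: every ray from an interior point meets $\mathbb{BP}$ exactly once, so no ray has the two intersection points $L'\neq L''$ that were needed there for a second component.

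The only step that needs care is Step 1, namely that radial projection from an interior point is a homeomorphism onto the boundary. It is standard convexity, but it relies essentially on $P_0$ being interior, which holds precisely in the acute case; in the right-angled or obtuse case $P_0$ lies on or outside $\mathbb{P}$ and the argument breaks down.
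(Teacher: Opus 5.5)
Your proof is correct and follows the paper's route. The paper justifies the lemma in one line, from the fact that $(x_0,y_0,z_0)$ lies in the interior of $\mathbb{P}$, implicitly using the same rays-from-$(x_0,y_0,z_0)$ picture as in the proof of Lemma~\ref{le.noncon1}; your radial-projection homeomorphism and your remark that each octant contains at most one removed vertex of $TP$ supply the details that one-liner leaves out.
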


\begin{lemma}\label{le.obtuse}
If $\angle BAC \geq \pi/2$, then  $N(-,-,+)=N(-,+,-)=N(-,+,+)=0$.
\end{lemma}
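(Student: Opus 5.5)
We need to show that if $\angle BAC \ge \pi/2$, then no point $D$ in the plane has $F(D)$ in $\mathbb{BP}(-,-,+)$, $\mathbb{BP}(-,+,-)$ or $\mathbb{BP}(-,+,+)$. The first two equalities are already contained in Lemma \ref{le.one.horn}, which holds for every base. So only $N(-,+,+)=0$ requires work. (If $\mathbb{BP}(-,+,+)$ happens to be disconnected, the claim is that both components have zero solutions; the argument below is pointwise and covers this.)

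\textbf{Plan: pointwise, via inscribed angles.} We argue directly, without Corollary \ref{co.numsol1}. Suppose $D\in\Pi^0\setminus\{A,B,C\}$ and $F(D)\in\mathbb{BP}(-,+,+)$. Then
$$\cos\angle BDC<\cos\angle BAC,\qquad \cos\angle ADC>\cos\angle ABC,\qquad \cos\angle ADB>\cos\angle ACB.$$
Equivalently,
$$\angle BDC>\angle BAC\ge \pi/2,\qquad \angle ADC<\angle ABC,\qquad \angle ADB<\angle ACB.$$

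\textbf{Where $D$ lies.} The locus $\{D:\angle BDC>\angle BAC\}$ is the open region bounded by $\gamma_1\cup\gamma_2$. This is the union of two disc segments on either side of $BC$, namely the part of $\mathbb{B}_-$ on $A$'s side together with its reflection across $BC$. Since $\angle BAC\ge\pi/2$, the arc $\gamma_1$ is the minor arc, so these are small lens-shaped regions near the segment $BC$.

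\textbf{Case 1: $D$ on $A$'s side of $BC$.} Then $D$ lies inside the circumdisc, in the segment cut off by $BC$ that contains $A$. I would show this forces $\angle ADB+\angle ADC\ge\pi-\angle BAC+\dots$, or more simply the following. A point strictly inside $\mathbb{S}$ on the arc side opposite to $C$ relative to line $AB$ sees $AB$ at an angle larger than the inscribed angle $\angle ACB$. The segment $D$ lies in is contained in the half-plane of $AB$ containing $C$ (or, near $B$, in the other half-plane), so at least one of $\angle ADB>\angle ACB$, $\angle ADC>\angle ABC$ holds. To make this rigorous I would use the angle sum at $D$. If $D$ is inside triangle $ABC$, then
$$\angle BDC+\angle ADC+\angle ADB=2\pi.$$
Combining this with $\angle ADC<\angle ABC$ and $\angle ADB<\angle ACB$ gives
$$\angle BDC>2\pi-\angle ABC-\angle ACB=\pi+\angle BAC>\pi,$$
which is impossible. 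If $D$ is inside the disc segment but outside the triangle, say beyond line $AB$, then $D$ lies in the segment of $\mathbb{B}_-$ cut by $AB$ not containing $C$, and there $\angle ADB>\pi-\angle ACB$. Then $\angle ADB<\angle ACB$ would force $\angle ACB>\pi/2$, contradicting $\angle BAC\ge\pi/2$. The argument beyond line $AC$ is symmetric.

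\textbf{Case 2: $D$ in the reflected lens on the far side of $BC$.} Then $A$ and $D$ are on opposite sides of $BC$. If the segment $AD$ meets the segment $BC$, then
$$\angle ADB+\angle ADC=\angle BDC,$$
so at least one of $\angle ADB,\angle ADC$ is at least $\angle BDC/2>\pi/4$. That alone is not enough. The better route is the angle sum in quadrilateral $ABDC$ (convex when $AD$ crosses $BC$): its interior angles satisfy $\angle BAC+\angle ABD+\angle BDC+\angle DCA=2\pi$. Since $\angle ADB<\angle ACB$ and $\angle ADC<\angle ABC$, comparing the triangles $ABD$ and $ACD$ via the law of sines should give $BD>AB\sin\ldots$. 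This is the step I expect to be the main obstacle. The clean fallback is continuity. By Corollary \ref{co.numsol1} the count is constant on each component, so it suffices to exhibit one point of $\mathbb{BP}(-,+,+)$, or of each component, with no preimage. I would take a point near the boundary arc on $E_A$ outside $\theta_A$ (Theorem \ref{te.ellipces} gives no solutions there) and show that preimages cannot accumulate, since $h$ is proper away from $\mathbb{S}$ and the circle maps only to $\widetilde{A},\widetilde{B},\widetilde{C}$. This approach still requires checking which boundary arcs of $\mathbb{BP}(-,+,+)$ lie outside $\theta_A$, which I would verify using the explicit points $\widehat{B},\widehat{C}$.
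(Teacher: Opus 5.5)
Your Case~1 is correct. Both the angle-sum argument for $D$ inside the triangle and the bound $\angle ADB>\pi-\angle ACB$ beyond line $AB$ work. The gap is in Case~2, which you never finish: for $D$ in the reflected lens on the far side of $BC$ you derive no contradiction, and the quadrilateral/law-of-sines idea is only sketched.

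The continuity fallback does not close this gap, for two reasons:
\begin{itemize}
\item Under the hypothesis $\angle BAC\ge\pi/2$, the region $\mathbb{BP}(-,+,+)$ is in fact empty (Lemma~\ref{le.obtusen}), so there is no point of it to test.
\item Your claim that preimages cannot accumulate is false as stated. Preimages of points approaching an arc of $E_A$ may converge to a vertex $A$, $B$ or $C$, where $h$ is undefined. The limit sets at those vertices are precisely the solid ellipses bounded by $E_A,E_B,E_C$ (Proposition~\ref{pr.limitpoint.1}). So knowing that a point of $E_A$ has no preimage tells you nothing about nearby points.
\end{itemize}

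The missing observation is that the reflected lens also lies in the open disc $\mathbb{B}_-$. Since $\angle BAC\ge\pi/2$, $\gamma_1$ is a minor arc (a semicircle in the right-angled case), so its mirror image $\gamma_2$ lies inside $\mathbb{S}$. Hence the whole region $\{D:\angle BDC>\angle BAC\}$ is contained in $\mathbb{B}_-$.

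For any $D\in\mathbb{B}_-$ you already have the needed comparison, which you used in Case~1:
\begin{itemize}
\item $\angle ADC>\angle ABC$ if $D$ is on $B$'s side of $AC$;
\item $\angle ADC>\pi-\angle ABC>\angle ABC$ on the other side, since $\angle ABC<\pi/2$;
\item $\angle ADC=\pi$ on the chord $AC$.
\end{itemize}
Likewise $\angle ADB>\angle ACB$.

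This is exactly the paper's proof. It gives $(j,k)=(-,-)$ for every $D$ with $\cos\angle BDC<x_0$. So it disposes of $N(-,+,+)$, $N(-,+,-)$ and $N(-,-,+)$ at once, with no case split and no appeal to Lemma~\ref{le.one.horn}. It also covers $D$ on the open segment $BC$, which neither of your two cases includes.
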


\begin{proof}
If $h(D)\in \mathbb{BP}(-,j,k)$ for some $j, k \in \{-,+\}$, then $\cos \angle BDC <\cos \angle BAC=x_0\leq 0$, hence,
$D$ is inside the circle $\mathbb{S}$. Since $\angle ABC <\pi/2<\pi - \angle ABC$ and $\angle ACB <\pi/2<\pi - \angle ACB$,
then it is easy to see that $\angle ADC > \angle ABC$ and
$\angle ADB > \angle ACB$, that imply $\cos \angle ADC < \cos \angle ABC=y_0$ and
$\cos \angle ADB <\cos \angle ACB=z_0$, that imply $(j,k)=(-,-)$.
\end{proof}

\smallskip

\begin{lemma}\label{le.obtusen}
If $\angle BAC \geq \pi/2$, then $\mathbb{BP}(-,+,+)=\emptyset$. If $\angle BAC > \pi/2$, then $\mathbb{BP}(+,-,-)$ has two connected components.
\end{lemma}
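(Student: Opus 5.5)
Both claims reduce to the geometry of the pillow $\mathbb{P}$ relative to the point $O:=(x_0,y_0,z_0)$. We use the identity $1+2xyz-x^2-y^2-z^2=4xyz$ at $O$, together with the convexity of $\mathbb{P}$.

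\emph{First claim.} Suppose $(x,y,z)\in\mathbb{BP}(-,+,+)$, so that $x<x_0\le 0$, $y>y_0>0$ and $z>z_0>0$. Note that $y_0,z_0>0$ because the angles at $B$ and $C$ are acute. Set $f(x,y,z)=1+2xyz-x^2-y^2-z^2$. Fix $y,z$ with $y>y_0$ and $z>z_0$, and regard $f$ as a concave quadratic in $x$. Its maximum sits at $x=yz>0$, so $f$ is strictly increasing on $(-\infty,0]$. Hence for $x<x_0\le0$ we get $f(x,y,z)<f(x_0,y,z)$.

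It remains to show $f(x_0,y,z)\le 0$ whenever $y\ge y_0$ and $z\ge z_0$. By Proposition~\ref{pr.limitpoint.1}, the slice of $\mathbb{P}$ by the plane $x=x_0$ is the solid ellipse $y^2+z^2-2x_0yz\le 1-x_0^2$. Write $x_0=-\cos(\beta+\gamma)$, $y_0=\cos\beta$, $z_0=\cos\gamma$. The point $(y_0,z_0)$ lies on the boundary of this ellipse, since $f(O)=4x_0y_0z_0$ and $x_0=0$ in the boundary case; for $x_0<0$ we have $f(O)<0$, so $(y_0,z_0)$ is outside.

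Since $x_0\le0$, the form $y^2+z^2-2x_0yz$ has all coefficients nonnegative on the quadrant $y,z\ge0$, so it is increasing in each of $y$ and $z$ there. Therefore $y^2+z^2-2x_0yz\ge y_0^2+z_0^2-2x_0y_0z_0\ge 1-x_0^2$, i.e.\ $f(x_0,y,z)\le 0$. Combining, $f(x,y,z)<0$, which contradicts $(x,y,z)\in\mathbb{BP}$. So $\mathbb{BP}(-,+,+)=\emptyset$.

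\emph{Second claim.} By Lemma~\ref{le.noncon1} and its proof, $\mathbb{BP}(+,-,-)$ has two components exactly when it is non-empty and there is no ray $\tilde l$ from $O$ into $\Omega(+,-,-)$ that meets $\mathbb{BP}$ without entering the interior of $\mathbb{P}$. The plan has three steps.

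\begin{enumerate}
\item Show $\mathbb{BP}(+,-,-)\neq\emptyset$. The points of $\mathbb{BP}$ near $(1,-1,-1)$ satisfy $x>x_0$, $y<y_0$, $z<z_0$.
\item Use the fact that, for $\angle BAC>\pi/2$, $O$ lies outside $\mathbb{P}$ with $f(O)<0$. The boundary case $f=0$ is excluded because $x_0<0$.
\item Rule out tangent rays. A ray from an exterior point into $\Omega(+,-,-)$ that meets $\mathbb{BP}$ but not $\operatorname{int}\mathbb{P}$ is a supporting ray. By positive curvature, such a ray touches $\mathbb{BP}$ at a single smooth point $T$, and at $T$ the direction $T-O$ lies in the tangent plane. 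I would show that this tangency cannot occur with $T\in\Omega(+,-,-)$.
\end{enumerate}

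The natural approach for step 3 is to show that the open cone $O+\Omega(+,-,-)$ is separated from the set of tangency points (the ``horizon'' of $\mathbb{P}$ seen from $O$). One way: show the horizon lies in the region where the ellipses' planes are crossed, e.g.\ the horizon curve passes through $\widehat{B},\widehat{C}$ and the regions $\mathbb{BP}(+,+,-)$, $\mathbb{BP}(+,-,+)$. Alternatively, check directly that the polar plane of $O$ with respect to the cubic, $\nabla f(T)\cdot(T-O)=0$, meets $\mathbb{BP}\cap\Omega(+,-,-)$ in no point.

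A cleaner route may be an angular argument. Every point of $\mathbb{BP}(+,-,-)$ can be written as $(\cos(\beta'+\gamma'),\cos\beta',\cos\gamma')$ with $\beta'>\beta$ and $\gamma'>\gamma$. I would then show that along each ray the two intersections are genuinely distinct, by exhibiting a point of $\operatorname{int}\mathbb{P}$ on the segment between them.

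I expect this tangency exclusion to be the main obstacle. I would first try the polar-plane computation in the $(\beta',\gamma')$ parametrization, where $f$ factors through sines.
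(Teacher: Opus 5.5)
Your proof that $\mathbb{BP}(-,+,+)=\emptyset$ is correct and is a mild variant of the paper's. The paper shows that the smaller root $g(y,z)=yz-\sqrt{(1-y^2)(1-z^2)}$ of $f(\cdot,y,z)=0$ is increasing in $y$ and $z$, so $x\ge g(y,z)>g(y_0,z_0)=-x_0\ge x_0$. You instead use concavity of $f$ in $x$, monotonicity of $y^2+z^2-2x_0yz$ on the positive quadrant, and $f(O)=4x_0y_0z_0\le 0$. Your sentence about where $(y_0,z_0)$ sits relative to the ellipse is garbled, but the inequality you actually use is exactly $f(O)\le 0$.

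The genuine gap is the second claim. The paper offers no argument (``can be easily checked''), and your proposal does not supply one either. You reduce it, via the proof of Lemma~\ref{le.noncon1}, to the nonexistence of a supporting ray from $O$ inside $\Omega(+,-,-)$, and then leave precisely that step open. Both shortcuts you suggest for it are wrong as stated; write $\alpha=\angle BAC$, $\beta=\angle ABC$, $\gamma=\angle ACB$.
\begin{itemize}
\item The horizon of $\mathbb{P}$ seen from $O$ does not pass through $\widehat{C}=(\cos\alpha,\cos\beta,\cos(\alpha-\beta))$. There $\widehat{C}-O$ is parallel to the $z$-axis, but $\partial_z f(\widehat{C})=2(\cos\alpha\cos\beta-\cos(\alpha-\beta))=-2\sin\alpha\sin\beta\neq 0$.
\item Not every point of $\mathbb{BP}(+,-,-)$ has the form $(\cos(\beta'+\gamma'),\cos\beta',\cos\gamma')$ with $\beta',\gamma'\in(0,\pi)$. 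For $\alpha=2\pi/3$, $\beta=\gamma=\pi/6$, the point $(\sqrt{3}/2,0,1/2)$ lies in $\mathbb{BP}(+,-,-)$. Yet $\beta'=\pi/2$, $\gamma'=\pi/3$ give $\cos(\beta'+\gamma')=-\sqrt{3}/2$; on parts of $\mathbb{BP}(+,-,-)$ one has $x=\cos(\beta'-\gamma')$ instead.
\end{itemize}

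A clean way to close the gap, with no rays or curvature, is to pass to angle coordinates. Write points of $[-1,1]^3$ as $(\cos a,\cos b,\cos c)$ with $a,b,c\in[0,\pi]$. Then $f=4\sin s\,\sin(s-a)\sin(s-b)\sin(s-c)$ with $s=(a+b+c)/2$. Hence $\mathbb{P}$ and $\mathbb{BP}$ are the images of the tetrahedron $T=\{a\le b+c,\ b\le a+c,\ c\le a+b,\ a+b+c\le 2\pi\}$ and of $\partial T$. Since cosine is decreasing, $\Omega(+,-,-)$ corresponds to $\{a<\alpha,\ b>\beta,\ c>\gamma\}$.
\begin{itemize}
\item On the face $a=b+c$ this region is the open triangle $\{b>\beta,\ c>\gamma,\ b+c<\alpha\}$. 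It is nonempty iff $\alpha>\beta+\gamma$, i.e.\ $\alpha>\pi/2$. Its closure avoids the edges of $T$ because $b\ge\beta>0$, $c\ge\gamma>0$ and $b+c\le\alpha<\pi$. So it is open and closed in $\mathbb{BP}(+,-,-)$ and forms one component: the curvilinear triangle with corners $\widetilde{A},\widehat{B},\widehat{C}$, bounded by arcs of $E_A,E_B,E_C$.
\item On each of the other three faces the region is convex in face coordinates and nonempty, since it contains points near the vertex $(0,\pi,\pi)$. These three pieces are glued along the edges $b=\pi$, $c=\pi$ and $a=0$, giving the second component.
\end{itemize}
The same coordinates also give the first claim in one line: $a>\alpha\ge\beta+\gamma>b+c$ contradicts $a\le b+c$ on $T$.
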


\begin{proof}
Since $\angle BAC \geq \pi/2$, then $\angle ABC +\angle ACB \leq \pi/2$, $x_0=\cos \angle BAC \leq 0$, $y_0=\cos \angle ABC \in (0,1)$, and
$z_0=\cos \angle ACB \in (0,1)$.
Let us consider the function $g:[0,1]\times[0,1] \rightarrow \mathbb{R}$, $g(y,z)=yz-\sqrt{(1-y^2)(1-z^2)}$. It is easy to see that
$g'_y(y,z)=z+y\sqrt{\frac{1-z^2}{1-y^2}}>0$ and $g'_z(y,z)=y+z\sqrt{\frac{1-y^2}{1-z^2}}>0$ for all $y,z \in (0,1)$. This implies that the minimum value of $g(y,z)$ on
the rectangle $[y_0,1]\times [z_0,1]$  is achieved at the point $(y_0,z_0)$ and it is equal to
$$
y_0z_0-\sqrt{(1-y_0^2)(1-z_0^2)}=\cos (\angle ABC + \angle ACB)=-\cos(\angle BAC)=-x_0.
$$
Therefore, $g(y,z) > -x_0$ for all $(y,z) \in (y_0,1]\times (z_0,1]$. Since the points $(x,y,z)\in \mathbb{BP}$ are such that
$x=yz\pm \sqrt{(1-y^2)(1-z^2)}$, then $\mathbb{BP}$ has empty intersection with  the set  $[-1,-x_0)\times(y_0,1]\times (z_0,1]$ and, consequently, with the set
$[-\infty,-x_0)\times(y_0,\infty]\times (z_0,\infty]$. Since $x_0 \leq 0$, we get $\mathbb{BP}(-,+,+)= \emptyset$.

The last statement can be easily checked.
\end{proof}

In what follows, we will have to find solutions of the Grunert system \eqref{eq_Grunert_1} for some specific points $(a_1,a_2,a_3)$
on the surface $\mathbb{BP}$. Let us recall the method for solving this system, which dates back to~\cite{Gru1841}, see also \cite{HLON}.

We consider the ratio $u=s_2/s_1 > 0$ (recall that $s_1,s_2,s_3 >0$).
It is easy to see that
\begin{eqnarray*}
s_1 = \frac{d_3}{\sqrt{1 - 2 a_3 u + u^2}},\quad s_2 = u s_1 = \frac{u d_3}{\sqrt{1 - 2 a_3 u + u^2}},\\
s_3 =\frac{(u^2 - 1)d_3^2 - (d_1^2 - d_2^2)(1-2a_3u +u^2)}{2(a_1 u - a_2)d_3 \sqrt{1-2a_3u +u^2}}.
\end{eqnarray*}

Moreover, $u$ satisfies the following equation (the Grunert equation):
\begin{equation}\label{eq.Grunert.eq1}
P(u):=P_4 u^4 + P_3 u^3 + P_2 u^2 + P_1 u + P_0 = 0,
\end{equation}
where the coefficients are as follows:
\begin{eqnarray*}
P_4 \!\!&=&\!\! (d_1^2 - d_2^2 - d_3^2)^2 - 4a_1^2d_2^2d_3^2, \\
P_3 \!\!&=&\!\! 8a_1^2a_3d_2^2d_3^2 + 4a_1a_2d_3^2(d_1^2 + d_2^2 - d_3^2) - 4(d_1^2 - d_2^2)(d_1^2 - d_2^2 - d_3^2)a_3, \\
P_2 \!\!&=&\!\!  (4a_3^2 + 2)(d_1^2 - d_2^2)^2 + 2(2a_1^2 + 2a_2^2 - 1)d_3^4-4(a_1^2d_2^2 + a_2^2d_1^2)d_3^2 - 8a_1a_2a_3(d_1^2 + d_2^2)d_3^2, \\
P_1 \!\!&=&\!\! 8a_2^2a_3d_1^2d_3^2 + 4a_1a_2d_3^2(d_1^2 + d_2^2 - d_3^2) - 4(d_2^2 - d_1^2)(d_2^2 - d_1^2 - d_3^2)a_3, \\
P_0 \!\!&=&\!\! (d_2^2 - d_1^2 - d_3^2)^2 - 4a_2^2d_1^2d_3^2.
\end{eqnarray*}
\smallskip
Recall that we are only interested in solutions such that $s_i>0$, $i=1,2,3$.
It is clear that there are at most four such solutions.

\subsection{The stability of the number of solutions}

Let us consider a continuous family of the bases $A_{\mu}B_{\mu}C_{\mu}$, $\mu \in [0,1]$.
We put
$x_0^{\mu}:=\cos \angle B_{\mu}A_{\mu}C_{\mu}$, $y_0^{\mu}:=\cos \angle A_{\mu}B_{\mu}C_{\mu}$, $z_0^{\mu}:=\cos \angle A_{\mu}C_{\mu}B_{\mu}$.
Without loss of generality, way may assume that $A_{\mu}, B_{\mu}, C_{\mu} \in \mathbb{S}$ and even  $A_{\mu}=(1,0)$ for all $\mu \in [0,1]$.

The point $(x_0^{\mu},y_0^{\mu},z_0^{\mu})$ is a common point of three planes containing
the limit solid ellipses $\operatorname{Lim}(A_{\mu})$, $\operatorname{Lim}(B_{\mu})$, and $\operatorname{Lim}(C_{\mu})$
(the equations of these planes are  $x=x_0^{\mu}$, $y=y_0^{\mu}$, $z=z_0^{\mu}$).
\smallskip

For $i,j,k \in \{-,+\}$ and a given  $\mu \in [0,1]$,
we define $\Omega_{\mu}(i,j,k)$ to be the set of points $(x,y,z)$ from $\mathbb{R}^3$ such that $i=\sgn(x-x_0^{\mu})$, $j=\sgn(y-y_0^{\mu})$, $k=\sgn(z-z_0^{\mu})$.

For any $\mu \in [0,1]$, we have
8 open subsets $\mathbb{BP}_{\mu}(i,j,k):=M\bigcap \Omega_{\mu}(i,j,k)$ for various vales of $i,j,k$, see \eqref{eq.surf_M}.

If all $\mathbb{BP}_{\mu}(i,j,k)$ are connected, we consider
$N_{\mu}(i,j,k)$, the number of solutions of the equation $h_{\mu}(x,y)=b$, where $b \in \mathbb{BP}_{\mu}(i,j,k)$
(the function $h_{\mu}$ (see \eqref{eq.func_h}) are constructed according to the base
$A_{\mu}B_{\mu}C_{\mu}$).
If every $\mathbb{BP}_{\mu}(i,j,k)$ has two connected components, we consider similar characteristics for any (continuous by $\mu$) family of components.

\begin{lemma}\label{le.consolgs1}
If a point $\breve{b}\in M$ is such that $\breve{b}\in \Omega_{\mu}(i,j,k)$ for some fixed $i,j,k \in \{-,+\}$ and for all $\mu \in [0,1]$,
then $N_{\mu}(i,j,k)$ is constant for all $\mu \in [0,1]$.
\end{lemma}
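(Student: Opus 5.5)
The plan is to fix the point $\breve{b}$ and study the solutions of $h_{\mu}(x,y)=\breve{b}$ as $\mu$ varies. This is the same argument as Proposition~\ref{pr.numsol1}, but now the parameter is the base instead of the target point.

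By Corollary~\ref{co.numsol1}, for each fixed $\mu$ the value $N_{\mu}(i,j,k)$ equals the number of solutions $D\in\mathbb{R}^2\setminus\{A_\mu,B_\mu,C_\mu\}$ of $h_{\mu}(D)=\breve{b}$. Here I assume $\breve{b}$ lies in the relevant connected component for all $\mu$. If $\mathbb{BP}_\mu(i,j,k)$ has two components, continuity of the family of components should give this, since $\breve b$ never meets the three moving planes. So it suffices to show that $n(\mu):=\#\{D: h_\mu(D)=\breve b\}$ is locally constant on $[0,1]$. Connectedness of $[0,1]$ then finishes the proof.

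Consider the map $H(D,\mu)=h_{\mu}(D)$, defined on $\{(D,\mu): D\notin\{A_\mu,B_\mu,C_\mu\}\}$. It is $C^1$ jointly, since the formulas \eqref{eq.func_h} depend smoothly on the parameters $u,v$, and I take $u=u(\mu)$, $v=v(\mu)$ continuous (smooth after reparametrization if needed). For fixed $\mu_0$ there are at most two solutions $D_1,\dots,D_{n}$, by Proposition~\ref{pr.spp_sol_1}; note $\breve b\notin\{\widetilde A_\mu,\widetilde B_\mu,\widetilde C_\mu\}$ because those points lie on the ellipses, i.e.\ on the planes $x=x_0^\mu$ etc. None of the $D_l$ lies on $\mathbb{S}$, since $h_\mu(\mathbb{S}\setminus\{A_\mu,B_\mu,C_\mu\})\subset\{\widetilde A_\mu,\widetilde B_\mu,\widetilde C_\mu\}$. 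By Lemma~\ref{le.degen1}, $h_{\mu_0}$ is therefore non-degenerate at each $D_l$. The implicit function theorem, applied to $H$ composed with a local chart of $M$ near $\breve b$, gives disjoint continuous branches $D_l(\mu)$ near $\mu_0$. Hence $n(\mu)\ge n(\mu_0)$ near $\mu_0$, which is lower semicontinuity.

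The main obstacle is the reverse inequality $n(\mu)\le n(\mu_0)$ for $\mu$ near $\mu_0$, i.e.\ excluding extra solutions appearing from nowhere. For this I take a sequence $\mu_m\to\mu_0$ and solutions $D^{(m)}$ of $h_{\mu_m}(D)=\breve b$ that are distinct from the branches $D_l(\mu_m)$. I then rule out each possible limit behaviour of $D^{(m)}$:
\begin{itemize}
\item $\|D^{(m)}\|\to\infty$ is impossible, because then $h_{\mu_m}(D^{(m)})\to(1,1,1)\ne\breve b$; this uses uniformity in $\mu$ of the estimate, which holds since the triangles stay in the unit circle.
\item $D^{(m)}\to A_{\mu_0}$ (or $B_{\mu_0}$, $C_{\mu_0}$) is impossible by Proposition~\ref{pr.limitpoint.1}, made uniform in $\mu$: the limit values lie in the plane $x=x_0^{\mu_0}$, but $\breve b$ is not in it.
\item Otherwise a subsequence converges to some $D^*$ with $h_{\mu_0}(D^*)=\breve b$, so $D^*=D_l$ for some $l$. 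But near $(D_l,\mu_0)$ the implicit function theorem gives uniqueness, so $D^{(m)}=D_l(\mu_m)$ for large $m$, a contradiction.
\end{itemize}
Thus $n$ is locally constant, and the lemma follows. The uniformity in $\mu$ of the vertex-limit statement is the delicate point. I would handle it by repeating the proof of Proposition~\ref{pr.limitpoint.1} with moving vertices: as $D\to A_{\mu_0}$, $\cos\angle B_\mu D C_\mu\to\cos\angle B_{\mu_0}A_{\mu_0}C_{\mu_0}=x_0^{\mu_0}\ne\breve b_1$.
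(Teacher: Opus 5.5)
Your proposal is correct and follows the paper's route: fix $\breve b$, show that the number of solutions of $h_\mu(D)=\breve b$ does not change with $\mu$, and then pass to the whole component via Corollary~\ref{co.numsol1}. The paper settles the middle step only with the remark that $h_\mu$ depends continuously on $\mu$; you supply what that step actually needs, namely non-degeneracy at the solutions (because $\breve b$ avoids $\widetilde A_\mu,\widetilde B_\mu,\widetilde C_\mu$) and the exclusion of solutions escaping to infinity or to a vertex (because $\breve b\neq(1,1,1)$ and $\breve b$ avoids the three planes), and the implicit function theorem with merely continuous dependence on $\mu$ already suffices, so no smoothing or reparametrization of the family is needed.
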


\begin{proof}
The functions $h_{\mu}$ are continuous with respect to $\mu$, hence, the number of solutions of the equation $h_{\mu}(x,y)=\breve{b}$ is a constant for all $\mu \in [0,1]$.
Now, it suffices to apply Corollary \ref{co.numsol1}.
\end{proof}

\begin{lemma}\label{le.consolgs2}
If bases $A_0B_0C_0$, $A_1B_1C_1$ and a point $\breve{b}=(\breve{x},\breve{y},\breve{z})\in M$ are such that
\begin{eqnarray*}
\sgn(\breve{x}-\cos \angle B_0A_0C_0)=\sgn(\breve{x}-\cos \angle B_1A_1C_1)=:i,\\
\sgn(\breve{y}-\cos \angle A_0B_0C_0)=\sgn(\breve{y}-\cos \angle A_1B_1C_1)=:j,\\
\sgn(\breve{z}-\cos \angle A_0C_0B_0)=\sgn(\breve{z}-\cos \angle A_1C_1B_1)=:k,
\end{eqnarray*}
then $N_0(i,j,k)=N_1(i,j,k)$.
\end{lemma}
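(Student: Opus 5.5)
The plan is to connect the two bases $A_0B_0C_0$ and $A_1B_1C_1$ by a continuous family $A_{\mu}B_{\mu}C_{\mu}$, $\mu\in[0,1]$, chosen so that the fixed point $\breve b$ stays in $\Omega_{\mu}(i,j,k)$ for every $\mu$. Once that is done, Lemma~\ref{le.consolgs1} gives $N_0(i,j,k)=N_1(i,j,k)$ at once.

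\textbf{Choice of family.} I would parametrize each base (up to similarity, which changes neither $h$-counts nor the cosines) by its angle triple $(\alpha,\beta,\gamma)$. These triples range over the open simplex $\{\alpha,\beta,\gamma>0,\ \alpha+\beta+\gamma=\pi\}$. Fix $\breve b=(\breve x,\breve y,\breve z)$ and the signs $i,j,k$. The condition $\sgn(\breve x-\cos\alpha)=i$ says that $\alpha$ lies on one side of $\arccos\breve x$, since $\cos$ is strictly decreasing on $(0,\pi)$. So it is an open half-plane condition in $\alpha$, of the form $\alpha>\arccos\breve x$ or $\alpha<\arccos\breve x$, and similarly for $\beta$ and $\gamma$. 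The set of admissible angle triples is therefore the intersection of the open simplex with three open half-spaces. That set is convex, and it contains both endpoints by hypothesis. I would take the straight segment $(\alpha_\mu,\beta_\mu,\gamma_\mu)=(1-\mu)(\alpha_0,\beta_0,\gamma_0)+\mu(\alpha_1,\beta_1,\gamma_1)$. Then I would realize it by inscribed triangles with $A_\mu=(1,0)$, placing $B_\mu,C_\mu$ on $\mathbb S$ via central angles $2\gamma_\mu$ and $2\beta_\mu$. This placement is continuous in $\mu$ and nondegenerate throughout. Along this family $\breve b\in\Omega_\mu(i,j,k)$ for all $\mu$, and $\breve b\in M$ is unchanged, because $M$ does not depend on the base.

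\textbf{Main obstacle.} Lemma~\ref{le.consolgs1} is stated under the standing convention that $N_\mu(i,j,k)$ is well defined, meaning $\mathbb{BP}_\mu(i,j,k)$ is either connected for all $\mu$ or has two components tracked continuously. The number of components might change along the path, for instance when the family passes through a right-angled base. If the region stays connected along the path, the lemma applies directly. Otherwise I would interpret $N_\mu(i,j,k)$ as the count on the component containing $\breve b$. The proof of Lemma~\ref{le.consolgs1} really uses only two facts: the count at the fixed point $\breve b$ is locally constant in $\mu$, and Corollary~\ref{co.numsol1} then transfers that count to the whole component containing $\breve b$. The real content is the first fact, and this is the step I expect to need care. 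To prove it, I would note that all solutions lie in $\mathbb R^2\setminus\mathbb S$. By Lemma~\ref{le.degen1} they are nondegenerate, so they persist under small changes of $\mu$ by the implicit function theorem. Conversely, no new solutions can appear, either from infinity (since $\breve b\neq(1,1,1)$) or from the circle or the vertices (since $\breve b$ lies off the ellipses and off $\widetilde A,\widetilde B,\widetilde C$ for every $\mu$). This persistence argument is essentially the one in Proposition~\ref{pr.numsol1} with $\mu$ as an extra parameter. Combining it with connectedness of $[0,1]$ gives the equality.
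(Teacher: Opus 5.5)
Your proposal is correct and follows the paper's route: you join the two bases by a continuous family along which $\breve b$ stays in $\Omega_\mu(i,j,k)$, and then you invoke Lemma~\ref{le.consolgs1}. Your straight-line interpolation in the angle simplex, using convexity of the admissible set, makes explicit the family the paper only calls ``easy to find''; your persistence argument at the fixed point $\breve b$ supplies the local constancy in $\mu$ that the paper's proof of Lemma~\ref{le.consolgs1} asserts from continuity alone, and it also removes any dependence on how many components $\mathbb{BP}_\mu(i,j,k)$ has.
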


\begin{proof}
It is easy to find a continuous family of bases $A_{\mu}B_{\mu}C_{\mu}$, $\mu \in [0,1]$, such that for $\mu=0$ and $\mu=1$ get bases in the statement of the lemma and
\begin{eqnarray*}
\sgn(\breve{x}-\cos \angle B_{\mu}A_{\mu}C_{\mu})=i,\quad
\sgn(\breve{y}-\cos \angle A_{\mu}B_{\mu}C_{\mu})=j,\quad
\sgn(\breve{z}-\cos \angle A_{\mu}C_{\mu}B_{\mu})=k
\end{eqnarray*}
for all $\mu \in [0,1]$. Now, it suffices to apply Lemma \ref{le.consolgs1}.
\end{proof}

\subsection{The base is acute-angled}

\begin{prop}\label{pr.consolgs3}
If bases $A_0B_0C_0$ and $A_1B_1C_1$ are acute-angled, then $N_0(i,j,k)=N_1(i,j,k)$ for any $i,j,k \in \{-,+\}$.
\end{prop}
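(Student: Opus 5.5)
The plan is to deform one acute base into the other inside the acute-angled class and to apply Lemma~\ref{le.consolgs1} along the way. By Lemma~\ref{le.acute}, for every acute base all eight sets $\mathbb{BP}_{\mu}(i,j,k)$ are non-empty and connected. So each $N_{\mu}(i,j,k)$ is a single, well-defined number by Corollary~\ref{co.numsol1}.

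First I choose the family. I normalize $A_{\mu}B_{\mu}C_{\mu}$ to lie on $\mathbb{S}$ with $A_{\mu}=(1,0)$. The space of acute triangles, parametrized by the angle triple $(\alpha,\beta,\gamma)$ with $\alpha+\beta+\gamma=\pi$ and all angles in $(0,\pi/2)$, is an open convex triangle. I therefore take the linear interpolation of angle triples, which stays acute for every $\mu$. The corresponding point $c_{\mu}:=(x_0^{\mu},y_0^{\mu},z_0^{\mu})$ then moves continuously and stays in the interior of $\mathbb{P}$ for all $\mu\in[0,1]$.

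The key step is to show that for fixed $(i,j,k)$ the function $\mu\mapsto N_{\mu}(i,j,k)$ is locally constant on $[0,1]$. Fix $\mu_1$ and pick a point $\breve b\in\mathbb{BP}_{\mu_1}(i,j,k)$; this is possible since the set is non-empty. Because $\breve b$ lies in the open octant $\Omega_{\mu_1}(i,j,k)$ and $c_{\mu}$ depends continuously on $\mu$, we have $\breve b\in\Omega_{\mu}(i,j,k)$ for all $\mu$ in some neighborhood $I$ of $\mu_1$. Lemma~\ref{le.consolgs1}, applied on the interval $I$ after reparametrization, then gives that $N_{\mu}(i,j,k)$ is constant on $I$. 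A locally constant function on the connected interval $[0,1]$ is constant, so $N_0(i,j,k)=N_1(i,j,k)$.

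The main obstacle is justifying Lemma~\ref{le.consolgs1} carefully, since its proof is only sketched. The claim is that the number of solutions of $h_{\mu}(x,y)=\breve b$ does not jump as $\mu$ varies. I would argue this as follows. Because $\breve b$ avoids the ellipses $E_A^{\mu},E_B^{\mu},E_C^{\mu}$ for $\mu\in I$, and hence avoids $\widetilde A_{\mu},\widetilde B_{\mu},\widetilde C_{\mu}$ and $(1,1,1)$, the preimages $h_{\mu}^{-1}(\breve b)$ stay in a compact set away from $\mathbb{S}$, away from the vertices and away from infinity, uniformly for $\mu$ in a slightly shrunk $I$. Here I use Proposition~\ref{pr.limitpoint.1} for the vertices and $F(D)\to(1,1,1)$ for infinity. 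Lemma~\ref{le.degen1} then says each preimage is a nondegenerate solution. The implicit function theorem in $(x,y,\mu)$ continues each solution, and compactness rules out new solutions appearing, by the same limit argument as in Proposition~\ref{pr.numsol1}. If this uniformity is delicate near the boundary of the octants, I would instead choose $\breve b$ deep inside the region, using a small closed neighborhood in $\mathbb{BP}_{\mu_1}(i,j,k)$ at positive distance from the three planes.
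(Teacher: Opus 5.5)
Your proposal is correct and follows essentially the paper's argument. You deform within the acute class, get local constancy of $\mu\mapsto N_{\mu}(i,j,k)$ from Lemma~\ref{le.consolgs1} (the paper phrases this via Lemma~\ref{le.consolgs2} for nearby $\mu_1,\mu_2$), and conclude by connectedness of $[0,1]$. Interpolating the angle triples directly instead of passing through the regular base, and your uniform-compactness justification of Lemma~\ref{le.consolgs1} (which the paper only sketches), are harmless refinements.
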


\begin{proof} It suffices to consider the case when $A_0B_0C_0$ is an arbitrary acute-angled base and the base $A_1B_1C_1$ is regular.
We can find 1-parameter continuous family $A_{\mu}B_{\mu}C_{\mu}$, $\mu \in [0,1]$ of bases,
such that the difference between the maximal and the minimal angles of the triangle
$A_{\mu}B_{\mu}C_{\mu}$
decreases monotonically to zero. For rather small $|\mu_1-\mu_2|$ in the corresponding 1-parameter family $A_{\mu}B_{\mu}C_{\mu}$, $\mu \in [0,1]$,
we have $N_{\mu_1}(i,j,k)=N_{\mu_2}(i,j,k)$ by Lemma  \ref{le.consolgs2}. Then standard arguments (on the compactness of $[0,1]$) imply the proposition.
\end{proof}
\smallskip

\begin{remark}
The Proposition \ref{pr.consolgs3} does not hold if $A_0B_0C_0$ is obtuse-angled and $A_1B_1C_1$ is acute-angled. In this case the sets
$h_{\mu}^{-1}(x_0^{\mu})\setminus \mathbb{S}$ should be significantly changed when $\mu$ goes from $0$ to $1$. Indeed, $h_{0}^{-1}(x_0^{0})\setminus \mathbb{S}$ is inside
$\mathbb{S}$ but $h_{1}^{-1}(x_0^{1})\setminus \mathbb{S}$ is outside
$\mathbb{S}$. Since points of $\mathbb{S}$ could be degenerate for $h_{\mu}$ (see the proof of Lemma \ref{le.degen1}), then we can not state that the number
$N_{\mu}(i,j,k)$ is constant with respect to $\mu$.
\end{remark}

The ``pillowcase'' $\mathbb{BP}$ with the ellipses $E_A$, $E_B$, $E_C$ and points $W:=(1,1,1)$, $(-1,-1,1)$, $(-1,1,-1)$, $(1,-1,-1)$ removed
is a disjoint union of its $8$ open regions. We consider some points in these regions, in order to check the number
of solutions of the Grunert system.
A general picture is shown in Fig.~\ref{pillowcase_acute}.

Recall that $N(+,-,-)=N(-,+,-)=N(-,-,+)=0$ for any non-degenerate base $ABC$ by
Lemma \ref{le.one.horn}.
Therefore, we need not check the points in $\mathbb{BP}(+,-,-)$, $\mathbb{BP}(-,+,-)$, and
$\mathbb{BP}(-,-,+)$.

Now, we examine some five points: $V_1 \in \mathbb{BP}(+,+,+)$, $V_2 \in \mathbb{BP}(-,+,+)$,
$V_3 \in \mathbb{BP}(+,+,-)$, $V_4 \in \mathbb{BP}(+,-,+)$,
$V_5 \in \mathbb{BP}(-,-,-)$.

\begin{figure}[t]
\center{\includegraphics[width=0.7\textwidth]{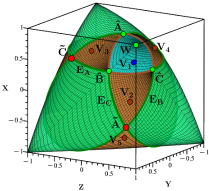}}\\%{Pillowcase_sol.pdf}{pillowcase.pdf}
\caption{The decomposition of the pillowcase for the acute-angled base.
There are two solutions in the region containing point $V_1$ and
there is one solution in each region containing
one of the points $V_2, V_3, V_4, V_5$.}
\label{pillowcase_acute}
\end{figure}

As the main partial case we consider  the case with regular base $ABC$, $d_1=d_2=d_3=1$.
Let us consider the point
\begin{equation*}
V_1=\frac{1}{10}\left(\, 7, \, \sqrt{85}, \, \sqrt{85} \right).
\end{equation*}
After some simplifications, we obtain the following Grunert equation \eqref{eq.Grunert.eq1} for this point:
\begin{equation*}
P(u)=-\frac{12}{125}\left(5u^2-\sqrt{85}u+5\right)\left(2u^2-\sqrt{85}u +5\right)=0.
\end{equation*}

It is easy to see that this equation has only two positive solutions: $u_{1}=(\sqrt{85}+3\sqrt{5})/4$  and
$u_{2}=(\sqrt{85}-3\sqrt{5})/4$.

If $u=u_{1} = (\sqrt{85}+3\sqrt{5})/4$ then
system \eqref{eq_Grunert_1} has the following solution:
$$
s_1=\frac{\sqrt{51}-3\sqrt{3}}{6}, \quad s_2=\frac{\sqrt{15}}{3}, \quad s_3=\frac{\sqrt{15}}{3}.
$$

If $u=u_{2} = (\sqrt{85}-3\sqrt{5})/4$ then
system \eqref{eq_Grunert_1} has the following solution:
$$
s_1=\frac{\sqrt{51}+3\sqrt{3}}{6}, \quad s_2=\frac{\sqrt{15}}{3}, \quad s_3=\frac{\sqrt{15}}{3}.
$$
\smallskip

Let us consider the point
\begin{equation*}
V_2=\frac{1}{2}\left(\,  0, \, {\sqrt{2}}, \, {\sqrt{2}} \right).
\end{equation*}
After some simplifications, we obtain the following Grunert equation for this point:
\begin{equation*}
P(u)=\left(u^2-\sqrt{2}u+1\right)\left(u^2+\sqrt{2}u -1\right)=0.
\end{equation*}
It is easy  to see that this equation has one positive solution $u_{1}=(\sqrt{6}-\sqrt{2})/2$
and then system \eqref{eq_Grunert_1} implies
$$
s_1=\frac{1+\sqrt{3}}{2}, \quad s_2=\frac{\sqrt{2}}{2}, \quad s_3=\frac{\sqrt{2}}{2}.
$$
\smallskip

Similarly, for each of the points
\begin{equation*}
V_3=\frac{1}{2}\left(\, {\sqrt{2}}, \, {\sqrt{2}} ,\, 0\right)
\quad
\mbox{and}
\quad
V_4=\frac{1}{2}\left(\, {\sqrt{2}}, \, 0, \, {\sqrt{2}}\right),
\end{equation*}
we get one solution:
$$
s_1=\frac{\sqrt{2}}{2}, \quad s_2=\frac{\sqrt{2}}{2}, \quad s_3=\frac{1+\sqrt{3}}{2},
$$
and
$$
s_1=\frac{\sqrt{2}}{2}, \quad s_2=\frac{1+\sqrt{3}}{2}, \quad s_3=\frac{\sqrt{2}}{2},
$$
respectively.
\smallskip

Let us consider the  point
\begin{equation*}
V_5=\frac{1}{10}\left(\, -7, \, \sqrt{15}, \, \sqrt{15} \right).
\end{equation*}
After some simplifications, we obtain the following Grunert equation for this point:
\begin{equation*}
P(u)=-\frac{2}{125}\left(5u^2-\sqrt{15}u+5\right)\left(12u^2+\sqrt{15}u -5\right)=0.
\end{equation*}
It is easy to see that this equation has only one positive solution $u_{1} = (\sqrt{255}-\sqrt{15})/24$, and
then system \eqref{eq_Grunert_1} has the following solution:
$$
s_1=\frac{\sqrt{51}+17\sqrt{3}}{34}, \quad s_2=\frac{\sqrt{85}}{17}, \quad s_3=\frac{\sqrt{85}}{17}.
$$

Therefore, we obtain the following result:

\begin{theorem}\label{te.decom_acute}
If the base $ABC$ is acute-angled, then there are two solutions to the Grunert system for any point in $\mathbb{BP}(+,+,+)$;
there is one solution for any point from  $ \mathbb{BP}(+,+,-) \bigcup \mathbb{BP}(+,-,+)\bigcup \mathbb{BP}(-,+,+) \bigcup \mathbb{BP}(-,-,-)$;
there is no solution for any point from $\mathbb{BP}(+,-,-)\bigcup \mathbb{BP}(-,+,-) \bigcup \mathbb{BP}(-,-,+)$.
\end{theorem}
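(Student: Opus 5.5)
The plan is to reduce the statement to the regular base and then use the explicit test points $V_1,\dots,V_5$ computed above. By Lemma~\ref{le.acute}, for an acute-angled base all eight sets $\mathbb{BP}(i,j,k)$ are non-empty and connected. By Corollary~\ref{co.numsol1}, the number of solutions of $h(x,y)=b$ is constant on each of them, so $N(i,j,k)$ is well defined. Since every solution of the Grunert system for $b\in\mathbb{BP}$ lies in $\Pi^0\setminus\{A,B,C\}$, this number is the number of solutions of the Grunert system. By Proposition~\ref{pr.consolgs3}, the numbers $N(i,j,k)$ are the same for every acute-angled base. Hence it suffices to determine the eight numbers for the regular base $d_1=d_2=d_3=1$, where $x_0=y_0=z_0=1/2$.

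For the three regions $\mathbb{BP}(+,-,-)$, $\mathbb{BP}(-,+,-)$, $\mathbb{BP}(-,-,+)$ I will simply invoke Lemma~\ref{le.one.horn}, which gives zero for any base. For the other five regions, the first task is to check that each test point lies on $\mathbb{BP}$ and in the claimed region. For instance, $V_1=(0.7,\sqrt{85}/10,\sqrt{85}/10)$ satisfies $1+2\cdot0.7\cdot0.85-0.49-2\cdot 0.85=0$, and all its coordinates exceed $1/2$, so $V_1\in\mathbb{BP}(+,+,+)$. Similarly, $V_2=(0,\sqrt2/2,\sqrt2/2)$ satisfies $1-\tfrac12-\tfrac12=0$ and has sign pattern $(-,+,+)$. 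The points $V_3$ and $V_4$ are obtained from $V_2$ by permuting coordinates. For $V_5=(-0.7,\sqrt{15}/10,\sqrt{15}/10)$ one checks $1-2\cdot0.7\cdot0.15-0.49-0.3=0$, and since $\sqrt{15}/10<1/2$ the sign pattern is $(-,-,-)$.

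Next I count the solutions at each $V_i$. Using the factorizations of the Grunert quartic \eqref{eq.Grunert.eq1} displayed above, I list the positive roots $u$: two for $V_1$ (the factor $5u^2-\sqrt{85}u+5$ has negative discriminant), and one each for $V_2$ and $V_5$. For each positive root I reconstruct $(s_1,s_2,s_3)$ and check that all three values are positive and satisfy \eqref{eq_Grunert_1}. Each admissible triple determines a point $D\in\Pi^0$ up to reflection in the plane; in the planar setting such a reflection is the reflection in the plane of the triangle, which fixes $D$. I will therefore also verify that the triple is actually realized by a point of the plane, i.e.\ that the configuration is planar, which is automatic on $\mathbb{BP}$ by \eqref{eq.vol.1}. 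Distinct positive roots $u=s_2/s_1$ give geometrically distinct points $D$.

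The main obstacle is making sure no solutions are lost or double-counted through the reduction to the variable $u$. The formula for $s_3$ divides by $a_1u-a_2$, so I must check this is nonzero at the roots found. I also need to confirm that every positive root of $P(u)$ yields $s_3>0$; at $V_1$ both roots give $s_3=\sqrt{15}/3$, so this holds there. These counts are consistent with the bound of Proposition~\ref{pr.spp_sol_1}, which allows at most two solutions. Combining the resulting values $2,1,1,1,1$ with the constancy results above yields the theorem.
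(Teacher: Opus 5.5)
Your proposal is correct and follows the paper's own argument. You reduce to the regular base via Lemma~\ref{le.acute}, Corollary~\ref{co.numsol1} and Proposition~\ref{pr.consolgs3}, dispose of the three ``horn'' regions with Lemma~\ref{le.one.horn}, and count the positive roots of the Grunert quartic at $V_1,\dots,V_5$; your extra checks (that each $V_i$ lies on $\mathbb{BP}$ in the claimed octant, that $a_1u-a_2\neq 0$, that $s_3>0$, and that $V_3,V_4$ follow from $V_2$ by the symmetry of the equilateral base) only make explicit what the paper leaves to the reader.
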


\medskip

\begin{prop}\label{pr.consolgs4}
For any non-degenerate base $ABC$, we have $N(+,+,+)=2$.
\end{prop}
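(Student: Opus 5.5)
Since Proposition~\ref{pr.spp_sol_1} already gives $N(+,+,+)\leq 2$, it suffices to exhibit two distinct solutions for one suitable point of $\mathbb{BP}(+,+,+)$. The set $\mathbb{BP}(+,+,+)$ is always nonempty and connected, because $(1,1,1)$ lies in its closure. Then Corollary~\ref{co.numsol1} gives two solutions for every point. The natural choice is a point near $(1,1,1)$, i.e. the image $h(D)$ of a point $D$ far from the triangle. The plan is to show that for $D$ with $\|D\|$ large, $h(D)\in\mathbb{BP}(+,+,+)$, and that there is a second, different point $D'$ with $h(D')=h(D)$.

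\textbf{Step 1: $h(D)$ lies in $\mathbb{BP}(+,+,+)$.} Take $D$ outside $\mathbb{S}$ and far away. Then all three angles $\angle BDC,\angle ADC,\angle ADB$ are small. In particular they are smaller than $\angle BAC,\angle ABC,\angle ACB$, because the inscribed-angle comparison gives $\angle BDC<\angle BAC$ for $D$ outside the circle on the side of $A$, and similarly for the other two. Hence all three cosines exceed $x_0,y_0,z_0$, respectively.

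\textbf{Step 2: a second preimage.} I would use the antigonal/isogonal structure mentioned in the introduction, where solution pairs of the planar Snellius--Pothenot problem form antigonal pairs. A cleaner route is a direct degree argument on the ray picture. Let $D$ go to infinity along a fixed direction $\xi$. Then $F(D)\to(1,1,1)$, and the first-order behaviour gives $1-a_i\approx \tfrac12\bigl(\text{projected side length}/\|D\|\bigr)^2$. The projected side lengths are those of the projection of $ABC$ onto the line orthogonal to $\xi$. Since that projection is one-dimensional, one of the projected side lengths is the sum of the other two. The asymptotic "shape" of $F(D)$ near $(1,1,1)$ is therefore determined by the direction $\xi$ modulo $\pi$. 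The opposite directions $\xi$ and $-\xi$ give the same projected lengths to leading order, so one expects two preimages, one near each end of the line. To make this rigorous, I would compute $h$ in polar coordinates $D=R(\cos\phi,\sin\phi)$ to second order in $1/R$. This would show that the map $D\mapsto h(D)$, restricted to a neighbourhood of infinity, is asymptotically a $2$-to-$1$ covering of a punctured neighbourhood of $(1,1,1)$ in $M$, with $D$ and roughly $-D$ (antipodal directions, same $R$) as the two sheets. Correction terms break the exact symmetry, but the inverse function theorem, applicable by Lemma~\ref{le.degen1} since we are outside $\mathbb{S}$, keeps both sheets.

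\textbf{Main obstacle and fallback.} The main obstacle is Step 2: controlling the asymptotics well enough to guarantee two distinct preimages of a single point $b$, rather than two nearby images. If that is too messy, the fallback is a deformation argument. Proposition~\ref{pr.consolgs3} with Theorem~\ref{te.decom_acute} settles acute bases. For a right or obtuse base, I would take a continuous family $A_\mu B_\mu C_\mu$ joining it to the regular triangle, and a point $\breve b$ very close to $(1,1,1)$ that lies in $\Omega_\mu(+,+,+)$ for all $\mu$. Such a point exists because $\max_\mu x_0^\mu,y_0^\mu,z_0^\mu<1$ by compactness. Lemma~\ref{le.consolgs1} then applies. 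Its proof only needs that no solution escapes to infinity or crosses $\mathbb{S}$ along the family. No escape to infinity holds because $\breve b\ne(1,1,1)$. The solutions cannot reach $\mathbb{S}$, where the image would be $\widetilde{A}_\mu,\widetilde{B}_\mu,\widetilde{C}_\mu$, because those points have a coordinate equal to $-x_0^\mu$ and so on. Hence $N_\mu(+,+,+)$ stays equal to $2$.
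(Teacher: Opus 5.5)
Your proposal is correct through its fallback, which is exactly the paper's proof. The paper picks $\breve b=h(x,y)$ with $(x,y)$ far away, so that $\breve b$ lies in the $(+,+,+)$ region both for $ABC$ and for the regular base. It then transports the count $2$ obtained at $V_1$ via Lemma~\ref{le.consolgs2}, i.e.\ Lemma~\ref{le.consolgs1} along a family of bases, just as you do. Your explanation of why the count at $\breve b$ cannot change with $\mu$ is more explicit than the paper's, but it needs two fixes:
\begin{itemize}
\item $\breve b\notin\{\widetilde{A}_\mu,\widetilde{B}_\mu,\widetilde{C}_\mu\}$ because each of these points lies on two of the planes $x=x_0^\mu$, $y=y_0^\mu$, $z=z_0^\mu$. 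Having a coordinate equal to $-x_0^\mu$ proves nothing, e.g.\ when $x_0^\mu<0$.
\item You must also rule out solutions running into a vertex. This holds because $\cos\angle B_\mu D C_\mu\to x_0^\mu$ as $D\to A_\mu$, uniformly in $\mu$, while $\breve x>\max_\mu x_0^\mu$; the same applies at $B_\mu$ and $C_\mu$.
\end{itemize}

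Your primary Step~2 is a genuinely different idea, but as written it is not a proof. Nondegeneracy (Lemma~\ref{le.degen1}) plus the inverse function theorem only give local inverses at each point, with no control of their size as $R\to\infty$. There $dh\to 0$, and a whole neighbourhood of infinity collapses onto the excluded cone point $(1,1,1)$. So ``asymptotically $2$-to-$1$'' together with the inverse function theorem does not produce two preimages of one fixed $b$.

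A clean way to finish your route is a winding-number argument:
\begin{itemize}
\item To leading order, $h(R\cos\phi,R\sin\phi)$ depends only on $\phi \bmod \pi$. Up to scale, it runs once around the three-faced cone $\{\text{one angle}=\text{sum of the other two}\}$ per half-turn. This is because the three gaps between the projections of $A,B,C$ determine the direction modulo $\pi$, since $B-A$ and $C-A$ span the plane.
\item Hence the image of a large circle winds twice around $(1,1,1)$.
\item As in the proof of Lemma~\ref{le.numsol1}, $h$ is a finite covering over a small punctured neighbourhood of $(1,1,1)$.
\item A simple closed curve in a one-sheeted component could wind at most once, so the component containing the large circle has at least two sheets.
\end{itemize}
This gives $N(+,+,+)\ge 2$ with no deformation and no regular-base computation.

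Two smaller points. Step~1 only needs that all three angles tend to $0$; your inscribed-angle comparison holds only on $A$'s side of $BC$. Also, connectedness of $\mathbb{BP}(+,+,+)$ does not follow from $(1,1,1)$ lying in its closure; it needs something like the ray argument of Lemma~\ref{le.noncon1}. The paper, too, merely asserts this connectedness.
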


\begin{proof} We know this property for a regular base $A_0B_0C_0$ (see above).
Let us prove it for an arbitrary $ABC$. It clear that $h(x,y) \to (1,1,1)$ if $x^2+y^2 \to \infty$.
Let us fix $(x,y)$ such that $\breve{b}:=h(x,y)$ is situated in the sets  $\mathbb{BP}(+,+,+)$ and  $\mathbb{BP}_0(+,+,+)$ (for both bases).
By Lemma \ref{le.consolgs2}, we get $N(+,+,+)=N_0(+,+,+)=2$.
\end{proof}
\smallskip

\subsection{The base is rectangular}
Without loss of generality, we may assume that $\angle BAC \geq \angle ABC \geq \angle ACB >0$.
We know that $\mathbb{BP}(-,+,+)=\emptyset$ by Lemma \ref{le.obtusen}. It is easy to see that all other
$\mathbb{BP}(i,j,k)$ are connected region of $\mathbb{BP}$.

By Lemmas \ref{le.one.horn}, \ref{le.obtuse} and Proposition \ref{pr.consolgs4}, we get that
$N(+,+,+)=2$, $N(-,-,+)=0$, $N(-,+,-)=0$, $N(-,+,+)=0$, $N(+,-,-)=0$.

Therefore, we should find only $N(+,-,+)$, $N(+,+,-)$, and $N(-,-,-)$.
We can check only one rectangular base, say, such that $\angle BAC= \pi/2$, $\angle ABC= \angle ACB= \pi/4$, $d_1=2$, $d_2=d_3=\sqrt{2}$.

\begin{figure}[t]
\center{\includegraphics[width=0.7\textwidth]{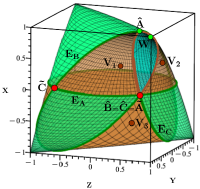}}\\
\caption{The decomposition of the pillowcase for the rectangular base.
There is one solution in each region containing
one of the points $V_1, V_2, V_3$.}
\label{pillowcase_right}
\end{figure}

Let's clarify the following information: $V_1\in\mathbb{BP(+,+,-)}$, $V_2\in\mathbb{BP(+,-,+)}$
and $V_3\in\mathbb{BP(-,-,-)}$, see Fig.~\ref{pillowcase_right}.

Let us consider the point
\begin{equation*}
V_1=\left(\,  \, 1/2, \, 1, \, 1/2 \right).
\end{equation*}
After some simplifications, we obtain the following Grunert equation \eqref{eq.Grunert.eq1} for this point:
\begin{equation*}
P(u)=-4\left(u-2\right)^2(u^2-u +1)=0.
\end{equation*}
It is easy to see that this equation has only one positive solution $u_{1}=2$  (of multiplicity~$2$).

Solving system \eqref{eq_Grunert_1} for the point $V_1$, we get the following solution:
$$
s_1=\frac{\sqrt{6}}{3}, \quad s_2=\frac{2\sqrt{6}}{3}, \quad s_3=\frac{\sqrt{6}}{3}+\sqrt{2}.
$$

Let us consider the point
\begin{equation*}
V_2=\left(\,  \, 1/2, \, 1/2, \, 1 \right).
\end{equation*}
After some simplifications, we obtain the following Grunert equation for this point:
\begin{equation*}
P(u)=-4\left(u-1\right)^2(u^2-2u -2)=0.
\end{equation*}
It is easy to see that this equation has only two positive solutions: $u_{1}=1$  (of multiplicity~$2$) and
$u_{2}=1+\sqrt{3}$.

Its first solution $u_1=1$ implies that $s_2=s_1$ and
the first equation of system \eqref{eq_Grunert_1} is not satisfied.

If $u=u_{2} = 1+\sqrt{3}$ then
system \eqref{eq_Grunert_1} has the following solution:
$$
s_1=\frac{\sqrt{6}}{3}, \quad s_2=\frac{\sqrt{6}}{3}+\sqrt{2}, \quad s_3=\frac{2\sqrt{6}}{3}.
$$

Let us consider the point
\begin{equation*}
V_3=\left(\, -1/2, \, 1/2, \, 1/2 \right).
\end{equation*}
After some simplifications, we obtain the following Grunert equation for this point:
\begin{equation*}
P(u)=-4\left(u^2-u+1\right)\left(u^2+2u -2\right)=0.
\end{equation*}
It is easy to see that this equation has only one positive solution $u_{1}=\sqrt{3}-1$.

Solving system \eqref{eq_Grunert_1} for the point $V_3$, we get the following solution:
$$
s_1=1+\frac{\sqrt{3}}{3}, \quad s_2=\frac{2\sqrt{3}}{3}, \quad s_3=\frac{2\sqrt{3}}{3}.
$$

Therefore, we obtain the following result (recall that $\mathbb{BP}(-,+,+)=\emptyset$ by Lemma~\ref{le.obtusen}):

\begin{theorem}\label{te.decom_right}
If the base $ABC$ is rectangular, then there are two solutions to the Grunert system for any point in $\mathbb{BP}(+,+,+)$;
there is one solution for any point from  $ \mathbb{BP}(+,+,-) \bigcup \mathbb{BP}(+,-,+) \bigcup \mathbb{BP}(-,-,-)$;
there is no solution for any point from $\mathbb{BP}(+,-,-)\bigcup \mathbb{BP}(-,+,-) \bigcup \mathbb{BP}(-,-,+)$.
\end{theorem}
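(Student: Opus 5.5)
The plan is to combine the facts already established for an arbitrary (in particular, rectangular) base with three explicit computations for one concrete right triangle, and then transfer these computations to every right triangle.

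\textbf{Step 1: regions already settled and the region structure.} Order the angles so that $\angle BAC=\pi/2\geq\angle ABC\geq\angle ACB$. Then $x_0=0$, $y_0,z_0\in(0,1)$, and the point $(x_0,y_0,z_0)$ lies on $\mathbb{BP}$. Proposition~\ref{pr.consolgs4} gives $N(+,+,+)=2$. Lemma~\ref{le.one.horn} gives $N(+,-,-)=N(-,+,-)=N(-,-,+)=0$. Lemma~\ref{le.obtusen} gives $\mathbb{BP}(-,+,+)=\emptyset$, so this region contributes nothing. I will also check that every remaining $\mathbb{BP}(i,j,k)$ is connected. Following the ray argument of Lemma~\ref{le.noncon1}, the apex $(x_0,y_0,z_0)$ lies on $\mathbb{BP}$ itself, so every ray from it meets $\mathbb{BP}$ in a single further point. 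Hence no region splits into two components $'$ and $''$. By Corollary~\ref{co.numsol1}, $N$ is then constant on each of $\mathbb{BP}(+,+,-)$, $\mathbb{BP}(+,-,+)$ and $\mathbb{BP}(-,-,-)$.

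\textbf{Step 2: one test point per remaining region.} For the isosceles right base with $d_1=2$, $d_2=d_3=\sqrt2$, pick $V_1=(1/2,1,1/2)$, $V_2=(1/2,1/2,1)$ and $V_3=(-1/2,1/2,1/2)$. First check that each point lies on $\mathbb{BP}$ and in the claimed region; here $x_0=0$ and $y_0=z_0=\sqrt2/2$. Then factor the Grunert quartic \eqref{eq.Grunert.eq1} at each point, keep the positive roots $u$, and reconstruct $s_1,s_2,s_3$ from the formulas preceding \eqref{eq.Grunert.eq1}. A root is discarded if it gives a nonpositive $s_i$ or violates \eqref{eq_Grunert_1}; for example, the double root $u=1$ at $V_2$ fails the first equation. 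Each point should leave exactly one admissible triple. Proposition~\ref{pr.spp_sol_1} guarantees that no further solutions are hidden elsewhere.

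\textbf{Step 3: transfer to arbitrary right triangles.} This is the main obstacle. Proposition~\ref{pr.consolgs3} is stated only for acute bases, so I will run the analogous deformation argument inside the class of right triangles. Take a continuous family $A_\mu B_\mu C_\mu$ with $\angle B_\mu A_\mu C_\mu=\pi/2$ throughout, joining the given base to the isosceles one. Along this family $x_0^\mu\equiv0$, and $y_0^\mu,z_0^\mu$ vary continuously in $(0,1)$. For small parameter steps, choose a point $\breve b$ that stays in the same $\Omega_\mu(i,j,k)$ for all nearby $\mu$. This is possible because each region is open and nonempty and moves continuously with $\mu$. Lemma~\ref{le.consolgs2} then gives local constancy of $N_\mu(i,j,k)$, and compactness of $[0,1]$ gives global constancy.

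The delicate point is that the degeneracy circle $\mathbb{S}$ never maps into the open regions, since its image is $\{\widetilde A,\widetilde B,\widetilde C\}$, which lies on the ellipses. Therefore Lemma~\ref{le.consolgs1} applies without the obstruction mentioned in the remark for the obtuse-to-acute transition. Combining Steps 1–3 yields the stated counts.
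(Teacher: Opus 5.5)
Your proposal is correct and is essentially the paper's proof. The same earlier results dispose of $\mathbb{BP}(+,+,+)$, of $\mathbb{BP}(+,-,-)$, $\mathbb{BP}(-,+,-)$, $\mathbb{BP}(-,-,+)$, and of the empty region $\mathbb{BP}(-,+,+)$; the paper then checks exactly your base $d_1=2$, $d_2=d_3=\sqrt2$ at exactly your points $V_1,V_2,V_3$. Your Step~3 deformation inside the class of right triangles, along which $(x_0,y_0,z_0)$ stays on $\mathbb{BP}$ so the relevant regions remain connected and nonempty, makes explicit what the paper only asserts with ``we can check only one rectangular base''.

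One computational caution: at $V_1$ the root $u=2$ gives $a_1u-a_2=0$, so the formula for $s_3$ becomes $0/0$. There you must solve the Grunert system directly, which gives $s_3=s_1\pm\sqrt2$, and only the plus sign is admissible because $s_1=\sqrt6/3<\sqrt2$.
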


\smallskip

\subsection{The base is obtuse-angled}

Without loss of generality, we may assume that $\angle BAC \geq \angle ABC \geq \angle ACB >0$.

We know that $\mathbb{BP}(-,+,+)=\emptyset$ by Lemma \ref{le.obtusen}.
Note that the $\mathbb{BP}(+,-,-)$ have two connected component $\mathbb{BP}(+,-,-)'$  and $\mathbb{BP}(+,-,-)''$
($\mathbb{BP}(+,-,-)'$ is closer to the point $(x_0,y_0,z_0)$ than $\mathbb{BP}(+,-,-)''$).
We denote by $N(+,-,-)'$  and $N(+,-,-)''$ the number of solution of the Grunert system for the point from $\mathbb{BP}(+,-,-)'$  and $\mathbb{BP}(+,-,-)''$ respectively.
It is easy to see that all other six sets
$\mathbb{BP}(i,j,k)$ are connected region of $\mathbb{BP}$.

By Lemmas \ref{le.one.horn}, \ref{le.obtuse} and Proposition \ref{pr.consolgs4}, we get that
$N(+,+,+)=2$, $N(-,-,+)=0$, $N(-,+,-)=0$, $N(-,+,+)=0$, $N(+,-,-)''=0$.

Therefore, we should find only $N(+,-,+)$, $N(+,+,-)$, $N(-,-,-)$, and $N(+,-,-)'$.
We can check only one obtuse-angled base, say, such that $\angle BAC= 2\pi/3$, $\angle ABC= \angle ACB= \pi/6$, $d_1=\sqrt{3}$, $d_2=d_3=1$.

\begin{figure}[t]
\center{\includegraphics[width=0.7\textwidth]{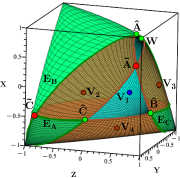}}\\
\caption{The decomposition of the pillowcase for the obtuse base.
There are two solutions in the region containing point $V_1$ and
there is one solution in each region containing
one of the points $V_2, V_3, V_4$.}
\label{pillowcase_obtuse}
\end{figure}

Let's clarify the following information: $V_1\in\mathbb{BP(+,-,-)'}$, $V_2\in\mathbb{BP(+,+,-)}$,
$V_3\in\mathbb{BP(+,-,+)}$ and $V_4\in\mathbb{BP(-,-,-)}$, see Fig.~\ref{pillowcase_obtuse}.

Let us consider the point
\begin{equation*}
V_1=\left(\,  0, \, \sqrt{2}/2, \, \sqrt{2}/2 \right).
\end{equation*}
After some simplifications, we obtain the following Grunert equation \eqref{eq.Grunert.eq1} for this point:
\begin{equation*}
P(u)=\left(u^2-3\sqrt{2}u +3\right)\left(u^2-\sqrt{2}u +1\right)=0.
\end{equation*}
It is easy to see that this equation has only two positive solutions: $u_{1}=(3\sqrt{2}+6)/2$  and
$u_{2}=(3\sqrt{2}-6)/2$.

If $u=u_{1} = (3\sqrt{2}+6)/2$ then
system \eqref{eq_Grunert_1} has the following solution:
$$
s_1=\frac{\sqrt{3}-1}{2}, \quad s_2=\frac{\sqrt{6}}{2}, \quad s_3=\frac{\sqrt{6}}{2}.
$$

If $u=u_{2} = (3\sqrt{2}-6)/2$ then
system \eqref{eq_Grunert_1} has the following solution:
$$
s_1=\frac{\sqrt{3}+1}{2}, \quad s_2=\frac{\sqrt{6}}{2}, \quad s_3=\frac{\sqrt{6}}{2}.
$$

Let us consider the point
\begin{equation*}
V_2=\left(\,  0, \, 1, \, 0 \right).
\end{equation*}
After some simplifications, we obtain the following Grunert equation for this point:
\begin{equation*}
P(u)=\left(u^2-3\right)\left(u^2+1\right)=0.
\end{equation*}
It is easy to see that this equation has only one positive solution $u_{1}=\sqrt{3}$.

Solving system \eqref{eq_Grunert_1} for the point $V_2$, we get the following solution:
$$
s_1=\frac{1}{2}, \quad s_2=\frac{\sqrt{3}}{2}, \quad s_3=\frac{3}{2}.
$$

Let us consider the point
\begin{equation*}
V_3=\left(\,  0, \, 0, \, 1 \right).
\end{equation*}
After some simplifications, we obtain the following Grunert equation for this point:
\begin{equation*}
P(u)=\left(u-1\right)^2\left(u-3\right)^2=0.
\end{equation*}
It is easy to see that this equation has only two positive solutions: $u_{1}=1$  (of multiplicity~$2$) and
$u_{2}=3$ (of multiplicity~$2$).

Its first solution $u_1=1$ implies that $s_2=s_1$ and
the first equation of system \eqref{eq_Grunert_1} is not satisfied.

If $u=u_{2} = 3$ then
system \eqref{eq_Grunert_1} has the following solution:
$$
s_1=\frac{1}{2}, \quad s_2=\frac{3}{2}, \quad s_3=\frac{\sqrt{3}}{2}.
$$

Let us consider the point
\begin{equation*}
V_4=\left(\,  -7/10, \, \sqrt{15}/10, \, \sqrt{15}/10 \right).
\end{equation*}
After some simplifications, we obtain the following Grunert equation for this point:
\begin{equation*}
P(u)=-\frac{12}{125}\left(5u^2-\sqrt{15}u+5\right)\left(2u^2+3\sqrt{15}u-15\right)=0.
\end{equation*}
It is easy to see that this equation has only one positive solution $u_{1}=(\sqrt{255}-3\sqrt{15})/4$.

Solving system \eqref{eq_Grunert_1} for the point $V_4$, we get the following solution:
$$
s_1=\frac{3\sqrt{17}+17}{34}, \quad s_2=\frac{\sqrt{255}}{17}, \quad s_3=\frac{\sqrt{255}}{17}.
$$

Therefore, we obtain the following result (recall that $\mathbb{BP}(-,+,+)=\emptyset$ by Lemma~\ref{le.obtusen}):

\begin{theorem}\label{te.decom_obtuse}
If the base $ABC$ is obtuse-angled, then there are two solutions to the Grunert system for any point in $\mathbb{BP}(+,+,+) \bigcup \mathbb{BP}(+,-,-)'$;
there is one solution for any point from  $ \mathbb{BP}(+,+,-) \bigcup \mathbb{BP}(+,-,+) \bigcup \mathbb{BP}(-,-,-) $;
there is no solution for any point from $\mathbb{BP}(+,-,-)''\bigcup \mathbb{BP}(-,+,-) \bigcup \mathbb{BP}(-,-,+)$.
\end{theorem}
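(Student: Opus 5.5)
The plan is to argue as in the acute and rectangular cases: use the general lemmas to fix most regions, then settle the rest by one explicit example plus a deformation argument. By Corollary~\ref{co.numsol1}, the number of solutions is constant on each connected component of each $\mathbb{BP}(i,j,k)$. By Lemma~\ref{le.obtusen}, $\mathbb{BP}(-,+,+)=\emptyset$. The remaining six sets other than $\mathbb{BP}(+,-,-)$ are connected, while $\mathbb{BP}(+,-,-)$ splits into $\mathbb{BP}(+,-,-)'$ and $\mathbb{BP}(+,-,-)''$.

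The general results then fix several values. Proposition~\ref{pr.consolgs4} gives $N(+,+,+)=2$. Lemma~\ref{le.obtuse} gives $N(-,-,+)=N(-,+,-)=0$. For the outer component $\mathbb{BP}(+,-,-)''$ I would reuse the argument of Lemma~\ref{le.one.horn}. This component contains points $b$ with $b_2,b_3$ arbitrarily close to $-1$, since it reaches the vertex $(1,-1,-1)$. For such $b$, the loci $\{\cos\angle ADC=b_2\}$ and $\{\cos\angle ADB=b_3\}$ are thin lens regions hugging the segments $AC$ and $AB$, and they do not meet. So $N(+,-,-)''=0$.

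That leaves $N(+,-,-)'$, $N(+,+,-)$, $N(+,-,+)$ and $N(-,-,-)$. These are not determined by the lemmas, and by the Remark after Proposition~\ref{pr.consolgs3} they cannot be transported from an acute base. Instead, Lemma~\ref{le.consolgs2} applies inside the class of obtuse bases. I would join any obtuse base to the model base with angles $2\pi/3,\pi/6,\pi/6$ by a continuous family of obtuse triangles. Along this family, $x_0^\mu<0$ for every $\mu$, so the combinatorics of the eight regions, including the two-component structure of $\mathbb{BP}_\mu(+,-,-)$, stays the same. For each small parameter step I would pick a test point $\breve b$ lying in the relevant region for both nearby bases, as in Proposition~\ref{pr.consolgs3}. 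Compactness of $[0,1]$ then shows that all four numbers equal those of the model base.

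For the model base, the explicit computations above finish the proof: $V_1$ gives $2$, and $V_2,V_3,V_4$ each give $1$. The hardest step is checking that each $V_i$ really lies in the claimed component. In particular, $V_1=(0,\sqrt2/2,\sqrt2/2)$ must lie in the inner component $\mathbb{BP}(+,-,-)'$. Here $(x_0,y_0,z_0)=(-1/2,\sqrt3/2,\sqrt3/2)$, so the sign conditions hold. To place $V_1$ in the inner component, I would check that the ray from $(x_0,y_0,z_0)$ through $V_1$ meets $\mathbb{BP}$ twice, with $V_1$ the nearer intersection. A second delicate point is that test points near the boundary ellipses may exit their regions during the deformation; choosing them well inside the components handles this.
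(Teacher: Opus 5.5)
Your proposal is correct and follows the paper's proof. The general lemmas fix $N(+,+,+)=2$, $N(-,-,+)=N(-,+,-)=0$, $N(+,-,-)''=0$ and $\mathbb{BP}(-,+,+)=\emptyset$, and the remaining four numbers are read off from the model base with angles $2\pi/3,\pi/6,\pi/6$ via the Grunert computations at $V_1,\dots,V_4$. You are slightly more careful than the paper, which simply asserts that checking one obtuse base suffices and cites Lemma~\ref{le.one.horn} (argued as though $\mathbb{BP}(+,-,-)$ were connected), whereas you justify the reduction by deforming within obtuse bases and apply the horn argument only to the outer component $\mathbb{BP}(+,-,-)''$.
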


\bigskip

The results obtained above provide a detailed answer to the question about the number of solutions of the Snellous--Pothenot problem for a fixed base $ABC$ and a given set of cosines of the angles $\angle BDC, \angle ADC, \angle ADB$.
Theorems \ref{te.decom_acute}, \ref{te.decom_right} and \ref{te.decom_obtuse} proved above together with Theorem~\ref{te.ellipces}
provide a complete answer to Question~\ref{prob.2} in the case of an acute-angled, rectangular and obtuse-angled base $ABC$, respectively.

We also note that the indicated results are constructive and on their basis a program can easily be written to find the number of solutions of the corresponding system of equations for the Snellius--Pothenot problem for any non-degenerate base.

\vspace{15mm}

\end{document}